\title{Remarks on special kinds of the relative log minimal model program}
\author{Kenta Hashizume}
\date{2017/4/5, version 0.04}
\keywords{relative log MMP,  $\mathbb{R}$-boundary divisors}
\subjclass[2010]{14E30}
\address{Department of Mathematics, Graduate School of Science, 
Kyoto University, Kyoto 606-8502, Japan}
\email{hkenta@math.kyoto-u.ac.jp}
\newtheorem{thm}{Theorem}[section]
\newtheorem{lem}[thm]{Lemma}
\newtheorem{cor}[thm]{Corollary}
\newtheorem{prop}[thm]{Proposition}
\theoremstyle{definition}
\newtheorem{defn}[thm]{Definition}
\newtheorem{rem}[thm]{Remark}
\newtheorem*{ack}{Acknowledgments} 
\newtheorem{say}[thm]{}
\newtheorem{step}{Step}
\newtheorem{step2}{Step}
\newtheorem{step3}{Step}
\newtheorem{step4}{Step}
\newtheorem{step5}{Step}
\newtheorem{step6}{Step}
\begin{document}

\maketitle

\begin{abstract}
We prove $\mathbb{R}$-boundary divisor versions of results proved by Birkar \cite{birkar-flip} or Hacon--Xu \cite{haconxu-lcc} on special kinds of the relative log minimal model program.
\end{abstract}
\tableofcontents

\section{Introduction}   
We will work over $\mathbb{C}$, the complex number field.% and ``log canonical pair'' means a log canonical pair with a boundary $\mathbb{R}$-divisor.   

The main results of this article are the following theorems, which are,
%Theorem \ref{thm1.3} and Theorem \ref{thmhaconxu} below. 
roughly speaking, $\mathbb{R}$-divisor versions of \cite[Theorem 1.1]{birkar-flip} and \cite[Theorem 1.1]{haconxu-lcc} respectively:

\begin{thm}[cf.~{\cite[Theorem 1.1]{birkar-flip}}]\label{thm1.3}
Let $\pi:X \to Z$ be a projective morphism of normal quasi-projective varieties and let $(X,B+A)$ be a log canonical pair, where $B \geq 0$ is an $\mathbb{R}$-divisor and $A \geq 0$ is $\mathbb{R}$-Cartier, such that $K_{X}+B+A\sim_{\mathbb{R},\,Z}0$. 
Suppose that $K_{X}+B$ is pseudo-effective over $Z$. 

Then $(X,B)$ has a good minimal model over $Z$. 
\end{thm}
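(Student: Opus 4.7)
The plan is to reduce this $\mathbb{R}$-boundary statement to Birkar's $\mathbb{Q}$-boundary Theorem 1.1 by a rational polytope argument combined with a simultaneous MMP.

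First I would set up a Shokurov-type polytope. Let $V \subset \operatorname{WDiv}_{\mathbb{R}}(X)$ be the finite-dimensional $\mathbb{R}$-subspace spanned by the components of $B + A$. Inside the rational polytope of effective $D \in V$ with $(X, D)$ log canonical, the locus $P = \{D \mid K_X + D \sim_{\mathbb{R}, Z} 0\}$ is a rational sub-polytope containing $B + A$, since $\mathbb{R}$-linear equivalence over $Z$ is a rational linear condition. One then expresses $B + A = \sum_{i} r_i \Delta_i$ as a positive convex combination of rational points $\Delta_i \in P$ close to $B + A$, and splits each $\Delta_i = B_i + A_i$ with $B_i, A_i \geq 0$ rational, $\sum_i r_i B_i = B$ and $\sum_i r_i A_i = A$.

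Next, for each $i$ such that $K_X + B_i$ is pseudo-effective over $Z$, Birkar's Theorem 1.1 applied to $(X, B_i + A_i)$ yields a good minimal model of $(X, B_i)$ over $Z$. To assemble these into a single model, I would run a $(K_X + B)$-MMP over $Z$ with scaling of $A$, and check that once the $\Delta_i$ are chosen close enough to $B + A$, every step of this $\mathbb{R}$-MMP is simultaneously a valid step of the $(K_X + B_i)$-MMP for each $i$ (the extremal rays and scaling thresholds coincide). Termination of the $\mathbb{R}$-MMP then reduces to termination in the rational case given by Birkar, and relative semi-ampleness of $K + B$ on the endpoint model follows as a positive combination of the corresponding semi-ampleness statements for the $K + B_i$.

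The main obstacle is that pseudo-effectiveness of $K_X + B$ does not automatically propagate to the rational approximations $K_X + B_i$, since the pseudo-effective cone need not have a rational face containing $[K_X + B]$. I would handle this by restricting the $\Delta_i$ to lie in the face of $P$ along which $K_X + D$ remains pseudo-effective over $Z$ --- itself a rational sub-polytope --- or, failing that, by a preliminary MMP that repositions $K_X + B$ onto a rational face of the pseudo-effective cone. This is the technical heart of the proof, and it is the step I would address first.
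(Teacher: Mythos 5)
There is a genuine gap, and it sits exactly where you placed your bets. One preliminary point in your favour that you did not exploit: the pseudo-effectivity obstacle you single out actually dissolves for an elementary reason. Since $K_{X}+B\sim_{\mathbb{R},\,Z}-A$ with $A\geq 0$ and $K_{X}+B$ pseudo-effective over $Z$, intersecting with a relatively ample divisor on a very general fibre forces $A$ to be vertical over $Z$ (the paper records this at the beginning of Step \ref{step2app} of the proof of Theorem \ref{thmmain2}); hence any $A_{i}\geq 0$ supported on ${\rm Supp}\,A$ restricts to zero on the general fibre, and $K_{X}+B_{i}\sim_{\mathbb{R},\,Z}-A_{i}$ is automatically numerically trivial, in particular pseudo-effective, over $Z$. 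Your two proposed fixes, by contrast, do not work as stated: the pseudo-effective locus inside the polytope is not known to be a rational sub-polytope in this lc, non-big setting (proving that is essentially as hard as the theorem), and a ``preliminary MMP that repositions $K_{X}+B$'' presupposes the termination you are trying to establish.

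The more serious failure is the ``simultaneous MMP.'' A $(K_{X}+B)$-negative extremal ray $R$ satisfies $\sum_{i}r_{i}(K_{X}+B_{i})\cdot R<0$, so it is negative for some $i$ but possibly positive for others, and choosing the $B_{i}$ close to $B$ does not repair this: the standard perturbation statement for extremal rays (\cite[Proposition 3.2(5)]{birkar-existII}, used in Lemma \ref{lemsemi-ample}) goes in the opposite direction --- nefness of one log divisor plus negativity of a nearby one on $R$ forces triviality of the first on $R$ --- and does not make every $(K_{X}+B)$-negative ray $(K_{X}+B_{i})$-negative for all $i$. Consequently the endpoint of your MMP need not be a weak lc model of any $(X,B_{i})$, and semi-ampleness cannot be assembled by convex combination. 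Scaling by $A$ also buys nothing, because $K_{X}+B+tA\sim_{\mathbb{R},\,Z}(1-t)(K_{X}+B)$, so the nef thresholds never move and termination cannot be extracted from Theorem \ref{thmtermi}. This is precisely why the paper does not argue this way: it deduces Theorem \ref{thm1.3} from Theorem \ref{thmmain2}, whose proof goes through weak semistable reduction and the canonical bundle formula (Lemma \ref{lemfiber} and Proposition \ref{prop2.1}) to settle the relatively numerically trivial fibre-space case, a dlt blow-up splitting the boundary into horizontal and vertical parts, good minimal models of the perturbed pairs $(Y,\Gamma-a_{n}G)$ for a sequence $a_{n}\to 0$, and special termination by induction on $\dim X$. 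The rational-polytope decomposition you propose does appear in the paper, but only in the final step, after $K_{X}+B$ has already been made nef over $Z$, where the relevant loci ($\mathcal{L}_{\rm nef}$ and the $\sim_{\mathbb{R},\,Z}0$ locus) genuinely are rational polytopes and the only remaining issue is semi-ampleness via \cite[Theorem 4.12]{fujino-gongyo}.
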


%Theorem \ref{thm1.3} is the $\mathbb{R}$-divisor version of  \cite[Theorem 1.1]{birkar-flip}.
%By using Theorem \ref{thm1.3} we can prove the following result: 

\begin{thm}[cf.~{\cite[Theorem 1.1]{haconxu-lcc}}]\label{thmhaconxu}
Let $\pi:X\to Z$ be a projective morphism of normal quasi-projective varieties 
and let $(X,\Delta)$ be a log canonical pair with a boundary $\mathbb{R}$-divisor $\Delta$. 
Assume that there exists an open subset $U\subset Z$ such that the pair $(\pi^{-1}(U),\Delta\!\!\mid_{\pi^{-1}(U)})$ has a good minimal model over $U$ and any lc center of $(X,\Delta)$ intersects $\pi^{-1}(U)$. 

 Then $(X,\Delta)$ has a good minimal model over $Z$. 
\end{thm}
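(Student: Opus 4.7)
My plan is to reduce Theorem~\ref{thmhaconxu} to its $\mathbb{Q}$-coefficient analogue \cite[Theorem~1.1]{haconxu-lcc} via an $\mathbb{R}$-approximation of the boundary. First I would take a $\mathbb{Q}$-factorial dlt blow-up $f\colon(Y,\Gamma)\to(X,\Delta)$ with $K_Y+\Gamma=f^{*}(K_X+\Delta)$. The lc centers of $(Y,\Gamma)$ surject onto those of $(X,\Delta)$, so the hypothesis that every lc center meets $\pi^{-1}(U)$ is preserved, and the existence of a good minimal model is invariant under crepant pullback. Hence I may assume $(X,\Delta)$ is $\mathbb{Q}$-factorial dlt from the outset.

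Next I would write $\Delta=\sum_{i=1}^{m}r_{i}\Delta_{i}$ with $r_i>0$, $\sum r_i=1$, and $\Delta_i$ $\mathbb{Q}$-boundaries of the same support as $\Delta$, chosen arbitrarily close to $\Delta$. Each $(X,\Delta_i)$ is then dlt with the same lc centers as $(X,\Delta)$, so the hypothesis on lc centers is inherited. The crucial analytic point is to verify, for $\Delta_i$ sufficiently close to $\Delta$, that $(\pi^{-1}(U),\Delta_i|_{\pi^{-1}(U)})$ again has a good minimal model over $U$; this should follow from the finiteness of log canonical models inside a Shokurov polytope, which tells us that the set of boundaries admitting a given good minimal model is open in the polytope. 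Granting this, \cite[Theorem~1.1]{haconxu-lcc} applied to each $\mathbb{Q}$-pair $(X,\Delta_i)$ furnishes a good minimal model over $Z$.

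Finally I would argue that, after shrinking the neighbourhood of $\Delta$ further if necessary, all these $\mathbb{Q}$-minimal models can be realised on a common birational map $\phi\colon X\dashrightarrow X^{\min}$. Then $K_{X^{\min}}+\phi_{*}\Delta=\sum r_i(K_{X^{\min}}+\phi_{*}\Delta_i)$ is a positive $\mathbb{R}$-combination of semi-ample divisors over $Z$, hence itself semi-ample over $Z$, and the discrepancy inequalities needed for $\phi$ to be a minimal model of $(X,\Delta)$ follow immediately from those for each $\Delta_i$. The main obstacle is the stability step: guaranteeing that the $\mathbb{Q}$-approximations $\Delta_i$ admit good minimal models simultaneously over $U$ and over $Z$ on a single birational contraction of $X$. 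This is delicate precisely because the hypothesis is imposed only over the open set $U\subset Z$, so the polytopal argument has to be carried out locally over $U$ and then reconciled with the global MMP produced by Hacon--Xu; if a direct argument fails, I would fall back on running an MMP with scaling for $(X,\Delta)$ over $Z$ and using the $\mathbb{Q}$-models to verify termination and semi-ampleness after the run.
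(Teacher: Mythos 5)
Your proposal rests on two finiteness/openness claims that are not available for log canonical pairs, and this is precisely the obstruction the paper is built to circumvent. First, you assert that the set of boundaries admitting a good minimal model over $U$ is open in the Shokurov polytope, so that rational approximations $\Delta_i$ of $\Delta$ inherit the hypothesis of \cite[Theorem 1.1]{haconxu-lcc}. Nefness of $K_{X'}+(\Delta_i)_{X'}$ on a fixed weak lc model does propagate to a rational polytope (\cite[Theorem 4.7.2]{fujino-book}), but \emph{semi-ampleness does not}: a positive $\mathbb{R}$-combination of divisors being semi-ample says nothing about the summands, and the finiteness-of-models statements that would rescue this (\cite[Corollary 1.1.5]{bchm} and its relatives) require a klt pair with big boundary. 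For a genuinely lc $\Delta$ this openness is essentially equivalent to the abundance-type statement you are trying to prove. The same objection applies to your final step, where you need all the $\mathbb{Q}$-models to live on a single birational contraction $X\dashrightarrow X^{\min}$: without finiteness of models for lc pairs there is no reason the good minimal models of the $(X,\Delta_i)$ should agree even in codimension one.

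The paper's proof is structured exactly to avoid this reduction. It runs an induction on $\dim X$ through weak semi-stable reduction \cite{ak}, the canonical bundle formula \cite[Corollary 3.2]{fg-bundle} and Proposition \ref{prop2.1} to handle the fibration on which $\nu(X/V,K_X+\Delta)=0$, then uses special termination to produce a log minimal model; only \emph{after} $K_X+\Delta$ is nef over $Z$ and the semi-ample fibration $\widetilde{\varphi}:U^X\to\widetilde{V}$ with $(K_X+\Delta)|_{U^X}\sim_{\mathbb{R},U}\widetilde{\varphi}^{*}A$ is explicitly in hand does it perform a polytope decomposition — and crucially it decomposes the \emph{pair} $(B,A)$ inside the rational polytope $\mathcal{T}$ cut out by the linear condition $(K_X+S+B')|_{U^X}\sim_{\mathbb{R},U}\widetilde{\varphi}^{*}A'$, so that each rational piece $K_X+S+B_i$ is automatically semi-ample over $U$ (being pulled back from an ample $A_i$) and nef and log abundant over $Z$, whence \cite[Theorem 4.12]{fujino-gongyo} applies. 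If you want to salvage your approach, you would need to postpone the rational decomposition until this stage and carry the fibration data $A$ along with the boundary, which is in effect reconstructing Step \ref{step6haconxu} of the paper's argument; your fallback of running an MMP with scaling is the right instinct, but making it terminate is where the bulk of the paper's work (Lemma \ref{lemfiber}, Proposition \ref{prop2.1}, and the special termination in Step \ref{step5.5haconxu}) actually lies.
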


% Proof of the theorem is described in Section \ref{sec3}. 
%Roughly speaking, Theorem \ref{thm1.3} and Theorem \ref{thmhaconxu} is the $\mathbb{R}$-divisor version of \cite[Theorem 1.1]{birkar-flip} and \cite[Theorem 1.1]{haconxu-lcc} respectively. 

%The difficulty of the proof of main results is that we can not directly apply the finite generation of log canonical rings for klt pairs and its applications (see \cite[Section 5]{birkar-flip}, \cite{cortilazic} and \cite[Section 2]{haconxu-lcc}). 
If $B,\,A$ and $\Delta$ are $\mathbb{Q}$-divisors in Theorem \ref{thm1.3} and Theorem \ref{thmhaconxu}, those theorems are nothing but \cite[Theorem 1.1]{birkar-flip} and \cite[Theorem 1.1]{haconxu-lcc} respectively. 
The proof of those theorems are described in Section \ref{sec3}. 

In general, theorems for lc pairs with $\mathbb{Q}$-boundary divisors can not 
%always 
be generalized directly to $\mathbb{R}$-boundary divisor case.  
One of the most important differences between those two cases is that in  $\mathbb{Q}$-boundary divisor case we can consider the finite generation of log canonical rings of the lc pairs. 
This difference is especially important when we deal with klt pairs. 
In fact, today the finite generation of log canonical rings for klt pairs yields various applications (see \cite[Section 5]{birkar-flip}, \cite{cortilazic} 
and \cite[Section 2]{haconxu-lcc}). 
In \cite{birkar-flip} and \cite{haconxu-lcc}, 
some 
applications of the finite generation of log canonical rings for klt pairs play crucial roles in the proof of \cite[Theorem 1.1]{birkar-flip} and \cite[Theorem 1.1]{haconxu-lcc}. 
On the other hand, when we deal with lc pairs with $\mathbb{R}$-boundary divisors, we cannot apply the argument as in \cite[Section 5]{birkar-flip} or \cite[Section 2]{haconxu-lcc}. 
So we need to seek other approaches to prove the main results. 
In this article we apply the argument of weak semi-stable reduction 
%which was 
developed by Abramovich and Karu \cite{ak}. 
By this argument %of weak semi-stable reduction
 (cf.~\cite[Theorem 2.1]{ak}) and Ambro's canonical bundle formula 
 for certain lc pairs
  (cf.~ \cite[Corollary 3.2]{fg-bundle}), an inductive argument works on fiber spaces $(X,\Delta)\to V$
  on which the relative numerical dimension of $K_{X}+\Delta$
  %the log canonical divisor 
  is zero and all lc centers 
  of $(X,\Delta)$
   dominate $V$
   %the base variety 
   (see Proposition \ref{prop2.1}). 
This result plays a crucial role in the proof of the main results. 
For details, see Section \ref{sec2}.

We also would like to remark that in our proof of the main results we do not apply \cite[Theorem 1.1]{birkar-flip} or \cite[Theorem 1.1]{haconxu-lcc} directly. 
The matter is the abundance theorem in those situations. 
In \cite{birkar-flip} and \cite{haconxu-lcc}, they apply \cite{haconxu}, which is the relative version of  \cite{fujino-gongyo}. 
In particular, in the situation of \cite[Theorem 1.1]{birkar-flip}%(or Theorem \ref{thm1.3})
, we can not take compactifications of given varieties to use \cite{fujino-gongyo} keeping the hypothesis of the theorem. 
So we cannot avoid applying \cite{haconxu} instead of \cite{fujino-gongyo}. 
In this article we give proof of the main results using compactifications and \cite{fujino-gongyo}. 
First we prove Theorem \ref{thmhaconxu}. 
In Theorem \ref{thmhaconxu} we can take compactifications of $X$ and $Z$ keeping the hypothesis of the theorem. 
For details, see Subsection \ref{subsec4.2}. 
After that, we prove a generalization of Theorem \ref{thm1.3} using  Theorem \ref{thmhaconxu} and \cite{fujino-gongyo}. 
For details, see Subsection \ref{subsec4.25}.

With Theorem \ref{thm1.3} and Theorem \ref{thmhaconxu}, we can prove $\mathbb{R}$-divisor versions of some results which are known in $\mathbb{Q}$-boundary divisor case: 

\begin{cor}[existence of lc closures, cf.~{\cite[Corollary 1.2]{haconxu-lcc}}]\label{corlcc}
Let $U^{0}$ be an open subset of a normal quasi-projective variety $U$, $f^{0}:X^{0}\to U^{0}$ be a projective morphism, and $(X^{0},\Delta^{0})$ be a log canonical pair. 
Then there exists a projective morphism $f:X\to U$ and a log canonical pair $(X,\Delta)$ such that $X^{0}=f^{-1}(U^{0})$ is an open subset and $\Delta^{0}=\Delta\!\!\mid_{X^{0}}$. 
Moreover, any lc center of $(X,\Delta)$ intersects $X^{0}$. 
\end{cor}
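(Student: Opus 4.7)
The plan is to compactify the data, take a log resolution, and then apply Theorem~\ref{thmhaconxu} to produce a good minimal model that will serve as the required lc closure.

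First, choose a projective morphism $\bar{f}\colon\bar{X}\to U$ extending $f^{0}$ with $\bar{f}^{-1}(U^{0})=X^{0}$ (standard via relative Nagata compactification and projective closure), and extend $\Delta^{0}$ to a divisor $\bar{\Delta}$ on $\bar{X}$ by taking the closure of each component; then every component of $\bar{\Delta}$ meets $X^{0}$. Take a log resolution $\pi\colon Y\to\bar{X}$ of $(\bar{X},\bar{\Delta})$ that is an isomorphism over $X^{0}$, and set $\Gamma_{Y}=\pi_{*}^{-1}\bar{\Delta}+E$, where $E$ is the reduced $\pi$-exceptional divisor (automatically supported over $U\setminus U^{0}$). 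Then $(Y,\Gamma_{Y})$ is log smooth, hence lc, with $\pi^{-1}(X^{0})\cong X^{0}$ and $\Gamma_{Y}|_{X^{0}}=\Delta^{0}$.

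The lc centers of $(Y,\Gamma_{Y})$ that can fail to meet $X^{0}$ are strata of $\lfloor\Gamma_{Y}\rfloor$ lying over $U\setminus U^{0}$, namely components of $E$ and intersection strata of $\pi_{*}^{-1}\bar{\Delta}$ sitting entirely over $\bar{X}\setminus X^{0}$. After further blow-ups inside $\pi^{-1}(\bar{X}\setminus X^{0})$, absorbing new exceptional divisors into $E$, I may assume every stratum of $\pi_{*}^{-1}\bar{\Delta}$ meets $X^{0}$. For $0<\varepsilon\ll 1$ the pair $(Y,\Gamma_{Y}-\varepsilon E)$ is then lc, klt along $E$, restricts to $\Delta^{0}$ over $X^{0}$, and has every lc center meeting $X^{0}$. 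Applying Theorem~\ref{thmhaconxu} to $(Y,\Gamma_{Y}-\varepsilon E)\to U$ with open subset $U^{0}$ yields a good minimal model $(X,\Delta)\to U$ that is an isomorphism over $U^{0}$ (the MMP steps all lie over $U\setminus U^{0}$), so $f^{-1}(U^{0})=X^{0}$, $\Delta|_{X^{0}}=\Delta^{0}$, and every lc center of $(X,\Delta)$ meets $X^{0}$.

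\textbf{Main obstacle.} The essential hypothesis demanded by Theorem~\ref{thmhaconxu} is the existence of a good minimal model of $(X^{0},\Delta^{0})$ over $U^{0}$, which is \emph{not} part of the corollary's statement. Producing this---together with the careful choice of blow-ups in the step above so that every surviving lc center meets $X^{0}$---is the technical heart of the argument. One plausible route is to combine Theorem~\ref{thm1.3}, applied to a $\mathbb{Q}$-factorial dlt modification of $(X^{0},\Delta^{0})$ together with a small ample $\mathbb{R}$-perturbation, with an auxiliary MMP over $U^{0}$; another is to reduce via an $\mathbb{R}$-convex-combination argument to the $\mathbb{Q}$-coefficient case of Hacon--Xu~\cite[Corollary 1.2]{haconxu-lcc}. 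Either way, this is where the $\mathbb{R}$-refinements in Theorems~\ref{thm1.3} and~\ref{thmhaconxu} do the real work.
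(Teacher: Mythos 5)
Your construction goes wrong at the point where you choose the base for the application of Theorem~\ref{thmhaconxu}. You apply it to $(Y,\Gamma_{Y}-\varepsilon E)\to U$ with open subset $U^{0}$, and the required hypothesis is then that the restriction of this pair over $U^{0}$ --- which is just $(X^{0},\Delta^{0})$ itself, since your resolution is an isomorphism over $X^{0}$ and $E$ lives over $U\setminus U^{0}$ --- has a good minimal model \emph{over} $U^{0}$. That is the existence of good minimal models for an arbitrary lc pair, which is not among the hypotheses of Corollary~\ref{corlcc} and is an open problem in general; you correctly flag this as the ``main obstacle'' but neither of your two suggested escape routes resolves it (there is no reason an ample perturbation of $\Delta^{0}$ should satisfy $K+\Delta^{0}+A\sim_{\mathbb{R},\,U^{0}}0$ as Theorem~\ref{thm1.3} requires, and convex combinations of $\mathbb{Q}$-coefficient lc closures of approximations of $\Delta^{0}$ do not glue to an lc closure of $\Delta^{0}$). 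Relatedly, your claim that ``the MMP steps all lie over $U\setminus U^{0}$'' is false for an MMP over $U$: it would force $K_{X^{0}}+\Delta^{0}$ to already be nef over $U^{0}$, which is not assumed.

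The paper's route (following \cite[Corollary 1.2]{haconxu-lcc} verbatim, with Theorem~\ref{thmhaconxu} in place of the $\mathbb{Q}$-coefficient statement) avoids this by running everything relative to the compactification $\bar{X}$ of $X^{0}$ over $U$ rather than relative to $U$. One applies Theorem~\ref{thmhaconxu} to the \emph{birational} morphism $Y\to\bar{X}$ with open subset $X^{0}\subset\bar{X}$: over $X^{0}$ the pair $(Y,\Gamma_{Y})$ is a log smooth model of the lc pair $(X^{0},\Delta^{0})$, so by Lemma~\ref{lembirequiv} the good-minimal-model hypothesis over $X^{0}$ is automatic ($K_{X^{0}}+\Delta^{0}$ is numerically trivial, hence semi-ample, over $X^{0}$ itself), and the condition that every lc center of $(Y,\Gamma_{Y})$ meet the preimage of $X^{0}$ is arranged by blow-ups exactly as in Step~\ref{step1app} of the proof of Theorem~\ref{thmmain2}. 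The resulting relative lc model of $(Y,\Gamma_{Y})$ over $\bar{X}$ is projective over $U$ by composition, restricts to $(X^{0},\Delta^{0})$ over $X^{0}$ because the lc model of a crepant-plus-effective-exceptional modification of an lc pair over the pair itself is the identity, and its lc centers meet $X^{0}$. Redirecting your argument to this base, and dropping the $-\varepsilon E$ perturbation (which is unnecessary once the base is $\bar{X}$), yields the paper's proof.
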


\begin{thm}[existence of lc flips, cf.~{\cite[Corollary 4.8.12]{fujino-book}}]\label{corflip}
Let $(X,\Delta)$ be a log canonical pair and $\varphi:X \to W$ be a flipping contraction, that is, a projective birational morphism  
of normal varieties such that 
\begin{enumerate}
\item[$\bullet$]
$X$ and $W$ are isomorphic in codimension one, and 
\item[$\bullet$]
$-(K_{X}+\Delta)$ is $\varphi$-ample. 
\end{enumerate}
Then $(K_{X}+\Delta)$-flip of $\varphi$ exists. 
\end{thm}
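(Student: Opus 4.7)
The plan is to construct the flip as the relative log canonical model over $W$ of a dlt modification of $(X,\Delta+H)$, for a carefully chosen auxiliary divisor $H$. Since $-(K_X+\Delta)$ is $\varphi$-ample, a standard Bertini--Kodaira argument (applied to $-(K_X+\Delta)+\varphi^*D$ for $D$ sufficiently ample on $W$) produces an effective $\mathbb{R}$-Cartier divisor $H$ on $X$, with small coefficients, such that $H\sim_{\mathbb{R},W}-(K_X+\Delta)$ and $(X,\Delta+H)$ is log canonical; hence $K_X+\Delta+H\sim_{\mathbb{R},W}0$. Take a $\mathbb{Q}$-factorial dlt modification $f\colon(Y,\Gamma)\to(X,\Delta+H)$, crepant, so $K_Y+\Gamma=f^*(K_X+\Delta+H)\sim_{\mathbb{R},W}0$, and split $\Gamma=B+A$, where $B$ is the dlt modification of $\Delta$ (the strict transform of $\Delta$ together with the reduced $f$-exceptional divisor) and $A$ is the strict transform of $H$. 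In particular $K_Y+B=f^*(K_X+\Delta)$.

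One would then like to apply Theorem \ref{thm1.3} to $(Y,B+A)$ over $W$: the pair is log canonical, $B,A\geq 0$, $A$ is $\mathbb{R}$-Cartier, and $K_Y+B+A\sim_{\mathbb{R},W}0$, so the desired conclusion is that $(Y,B)$ has a good minimal model over $W$. The obstacle is that $K_Y+B=f^*(K_X+\Delta)$ is $(\varphi\circ f)$-anti-nef and hence \emph{not} pseudo-effective over $W$, so the pseudo-effectiveness hypothesis of Theorem \ref{thm1.3} fails. This is the central difficulty of the proof. I would resolve it by appealing to the generalization of Theorem \ref{thm1.3} developed in Subsection \ref{subsec4.25}, which exchanges the pseudo-effectiveness hypothesis for a weaker condition automatically satisfied in the present setup, or equivalently by invoking Theorem \ref{thmhaconxu} on $(Y,\Gamma)$ over $W$ with the open set $U\subset W$ taken to be the isomorphism locus of $\varphi$, after arranging (via a careful choice of $f$, possibly together with Corollary \ref{corlcc}) that every lc center of $(Y,\Gamma)$ meets $(\varphi\circ f)^{-1}(U)$.

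Granted a good minimal model $(Y',B')$ of $(Y,B)$ over $W$, the semi-ample divisor $K_{Y'}+B'$ induces a relative fibration $Y'\to X^+$ over $W$ with $K_{X^+}+\Delta^+$ ample over $W$, where $\Delta^+$ is the pushforward of $B'$. Every divisor contracted along the composition $X\dashrightarrow Y\dashrightarrow Y'\to X^+$ is $f$-exceptional and therefore has image of codimension at least two in $X$; consequently the induced birational map $X\dashrightarrow X^+$ is an isomorphism in codimension one. It follows that $\varphi^+\colon X^+\to W$ is a small projective birational morphism with $K_{X^+}+\Delta^+$ relatively ample, which is the desired $(K_X+\Delta)$-flip of $\varphi$.
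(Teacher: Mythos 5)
Your proposal misidentifies the ``central difficulty.'' You claim that $K_Y+B=f^{*}(K_{X}+\Delta)$ fails to be pseudo-effective over $W$ because it is anti-nef over $W$, and you build the whole second half of the argument around evading the pseudo-effectiveness hypothesis of Theorem \ref{thm1.3}. But $\varphi$ is \emph{birational}, so the very general fiber of $X\to W$ is a point; with the paper's conventions (cf.~\ref{numeri} and Lemma \ref{lemnum}(1)) every $\mathbb{R}$-Cartier $\mathbb{R}$-divisor on $X$ is big, hence pseudo-effective, over $W$. Relative anti-ampleness for a small birational morphism is a statement about contracted curves and is perfectly compatible with relative bigness. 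So the hypothesis of Theorem \ref{thm1.3} holds automatically, and the paper's proof is exactly the short argument you describe in your first and last paragraphs: reduce to $W$ affine (which also supplies the quasi-projectivity that Theorem \ref{thm1.3} requires and that you use tacitly in the Bertini step), choose a general ample $A\geq 0$ with $(X,\Delta+A)$ lc and $K_{X}+\Delta+A\sim_{\mathbb{R},\,W}0$, apply Theorem \ref{thm1.3} to get the log canonical model of $(X,\Delta)$ over $W$, which is the flip. The auxiliary dlt modification $f$ is unnecessary.

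The detour you propose to fix the nonexistent problem would also not work as stated. Theorem \ref{thmmain2} does \emph{not} weaken the pseudo-effectiveness hypothesis of Theorem \ref{thm1.3} --- it assumes verbatim that $K_{X}+B$ is pseudo-effective over $Z$ --- so it cannot ``exchange'' that hypothesis for anything. And applying Theorem \ref{thmhaconxu} with $U\subset W$ the isomorphism locus of $\varphi$ requires every lc center of the pair to meet $(\varphi\circ f)^{-1}(U)$; if $(X,\Delta)$ has an lc center contained in the flipping locus, no choice of crepant dlt modification (nor Corollary \ref{corlcc}, which addresses lc centers escaping to the boundary of a compactification, not lc centers inside the exceptional locus) can arrange this, since such centers are birational invariants of the pair. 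Remove the misconception about relative pseudo-effectivity and your argument collapses to the correct one.
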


\begin{thm}[cf.~{\cite[Theorem 1.1]{has-trivial}}]\label{corhas}
Let $\pi:X \to Z$ be a projective surjective morphism of normal projective varieties and let $(X,\Delta)$ be a log canonical pair.  
Suppose that $K_{X}+\Delta \sim_{\mathbb{R}}\pi^{*}D$ for an $\mathbb{R}$-divisor $D$ on $Z$.  
Fix $d\in \mathbb{Z}_{>0}$ and assume that all $d$-dimensional Kawamata log terminal pairs have a good minimal model or a Mori fiber space. 
If 
\begin{itemize}
\item
${\rm dim}\,Z\leq d$, or
\item
${\rm dim}\,Z=d+1$ and $D$ is big, 
\end{itemize}
then $(X,\Delta)$ has a good minimal model or a Mori fiber space.
\end{thm}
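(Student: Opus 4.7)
The plan is to follow the $\mathbb{Q}$-boundary argument of \cite{has-trivial}, now invoking Theorems~\ref{thm1.3} and~\ref{thmhaconxu} in the role of the special kinds of MMP needed there. First I would replace $(X,\Delta)$ by a $\mathbb{Q}$-factorial dlt blow-up, noting that the relation $K_{X}+\Delta\sim_{\mathbb{R}}\pi^{*}D$ is preserved.

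Since $K_{X}+\Delta\sim_{\mathbb{R},Z}0$, the morphism $\pi\colon(X,\Delta)\to Z$ is an lc-trivial fibration. Applying Ambro's canonical bundle formula in its $\mathbb{R}$-divisor form (cf.~\cite[Corollary 3.2]{fg-bundle}), one obtains, after possibly replacing $Z$ by a higher birational model, a generalized lc pair $(Z,B_{Z}+M_{Z})$ with $D\sim_{\mathbb{R}}K_{Z}+B_{Z}+M_{Z}$. I would then convert this generalized pair into a true klt pair $(Z,\Delta_{Z})$: in the case $\dim Z\le d$, by perturbing $M_{Z}$ with a small general ample $\mathbb{R}$-divisor; in the case $\dim Z=d+1$ with $D$ big, by first using Kodaira's lemma $D\sim_{\mathbb{R}}A+E$ ($A$ ample, $E\ge 0$) and then cutting by a sufficiently general member of $|mA|$ for $m\gg 0$ via Bertini to reduce the base dimension to $d$.

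By the standing hypothesis the resulting klt pair of dimension at most $d$ admits a good minimal model or a Mori fiber space. In the pseudo-effective case (good MM on the base), I would transfer the structure back to $X$ as follows. Over an open subset $U\subset Z$ on which $(Z,\Delta_{Z})$ has a good minimal model, one constructs a good minimal model of $(\pi^{-1}(U),\Delta|_{\pi^{-1}(U)})$ over $U$ directly, using that $K_{X}+\Delta\sim_{\mathbb{R}}\pi^{*}D$ and invoking Theorem~\ref{thm1.3} after a small ample perturbation that plays the role of the divisor $A$ in that theorem. Theorem~\ref{thmhaconxu} then extends this to a global good minimal model of $(X,\Delta)$, provided every lc center of $(X,\Delta)$ meets $\pi^{-1}(U)$; this is arranged by choosing $U$ to meet every generalized lc center of $(Z,B_{Z}+M_{Z})$, since horizontal lc centers of $(X,\Delta)$ correspond to such centers on $Z$ and vertical ones dominate closed subvarieties that can be included in the stratification. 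In the non-pseudo-effective case, a parallel construction pulls back a Mori fibration from the base.

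The main obstacle I foresee is the case $\dim Z=d+1$ with $D$ big, where the reduction to the $d$-dimensional klt hypothesis must preserve the lc-trivial relation $K_{X}+\Delta\sim_{\mathbb{R}}\pi^{*}(\,\cdot\,)$ after the hyperplane cut, so that the subsequent application of Theorems~\ref{thm1.3} and~\ref{thmhaconxu} remains valid; this demands a careful coordination of the perturbation of $B_{Z}$ with the Bertini choice and with the $\mathbb{R}$-Cartier condition on the perturbing divisor on $X$.
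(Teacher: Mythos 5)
The paper's own proof of this theorem is a two-line reduction: it repeats the argument of \cite[Theorem 1.1]{has-trivial} verbatim, substituting Theorem \ref{thm1.3} for \cite[Theorem 1.1]{birkar-flip} at the points where the latter was invoked. Your outline correctly identifies that substitution and the broad shape of the argument (canonical bundle formula on the base, the $d$-dimensional klt hypothesis, lifting back to $X$), but it contains genuine gaps. The most basic one is the role you assign to Theorem \ref{thmhaconxu}: that theorem is a statement about minimal models \emph{over} $Z$, in both hypothesis and conclusion. Since here $K_{X}+\Delta\sim_{\mathbb{R},\,Z}0$, the pair $(X,\Delta)$ is already its own good minimal model over $Z$ (and over any open $U\subset Z$), so Theorem \ref{thmhaconxu} yields nothing toward the \emph{absolute} good minimal model that the statement asks for; it cannot upgrade relative data over $U\subset Z$ to a minimal model over $\mathrm{Spec}\,\mathbb{C}$. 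The tool the paper actually uses for the descent-and-lift is Proposition \ref{prop2.1} (equivalently Lemma \ref{lemfiber} with its base taken to be $\mathrm{Spec}\,\mathbb{C}$ and its $V$ equal to the given $Z$, exactly as in the proof of Corollary \ref{cor1.4}): one runs the absolute $(K_{Y}+\Gamma)$-MMP over the good minimal model of the base pair and kills the remaining very exceptional divisor.

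The second gap is that \cite[Corollary 3.2]{fg-bundle} produces an honest klt pair on $Z$ only when every lc center of $(X,\Delta)$ dominates $Z$; otherwise the discriminant $B_{Z}$ acquires coefficient-one components, and perturbing the moduli part by a small general ample divisor does not remove them ($M_{Z}$ is only nef, so it cannot be traded for an effective representative, and the perturbation leaves the reduced part of $B_{Z}$ untouched). Reducing to the case where all lc centers are horizontal is the technical heart of \cite{has-trivial} and of Sections \ref{sec2}--\ref{sec3} of this paper: the decomposition $\Gamma=\Gamma'+\Gamma''$ with $\Gamma''$ vertical, the relation $K_{Y}+\Gamma\sim_{\mathbb{R},\,Z}G+D$, the MMPs for a sequence $a_{n}\to0$, and the special termination fed by induction on $\dim X$ --- which is also where the $d$-dimensional hypothesis is consumed for the vertical lc centers. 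Your proposal replaces all of this by a choice of the open set $U$, which only localizes the difficulty rather than removing it. Finally, in the case $\dim Z=d+1$ with $D$ big, cutting the base by a general member of $|mA|$ replaces $X$ by the divisor $\pi^{-1}(Z\cap H)$, and a good minimal model of that restriction does not yield one of $(X,\Delta)$; no dimension reduction is needed there, since a klt pair on $Z$ with big log canonical divisor already has a good minimal model by \cite[Corollary 1.4.2]{bchm}.
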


\begin{cor}\label{cor1.4}
Let $(X,\Delta)$ be a projective log canonical pair such that $K_{X}+\Delta$ is abundant and $0\leq\nu(X,K_{X}+\Delta)\leq 4$. 

Then $(X,\Delta)$ has a good minimal model. 
\end{cor}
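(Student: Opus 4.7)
The plan is to put $(X,\Delta)$ into the relative setting of Theorem~\ref{corhas} via its Iitaka fibration, and then invoke the known existence of good minimal models for $3$-dimensional klt pairs (BCHM and abundance in dimension three).

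I would first replace $(X,\Delta)$ with a $\mathbb{Q}$-factorial dlt model $f\colon (Y,\Delta_Y)\to (X,\Delta)$ satisfying $K_Y+\Delta_Y=f^{*}(K_X+\Delta)$; this preserves abundance and the numerical dimension, and a good minimal model of $(Y,\Delta_Y)$ descends to one of $(X,\Delta)$. Using that $K_Y+\Delta_Y$ is abundant with $\kappa=\nu=:k\le 4$, I would further modify $Y$ birationally (keeping a crepant relation with $(X,\Delta)$) so that the Iitaka fibration is realized as a surjective morphism $\pi\colon Y\to V$ onto a normal projective variety $V$ with $\dim V=k$, and $K_Y+\Delta_Y\sim_{\mathbb{R}}\pi^{*}D$ for a big $\mathbb{R}$-divisor $D$ on $V$ (when $k\ge 1$). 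The boundary case $k=0$ is already known---any lc pair with vanishing numerical dimension admits a good minimal model---so I would dispose of it separately and assume $k\ge 1$ from here on.

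Now apply Theorem~\ref{corhas} to $\pi\colon Y\to V$ with $d=3$. The running hypothesis---that every $3$-dimensional klt pair has a good minimal model or a Mori fiber space---is known from BCHM together with abundance in dimension three. Since $k\le 4$, either $\dim V\le 3$, or $\dim V=4$ with $D$ big, which is exactly the dimension hypothesis of the theorem. Hence $(Y,\Delta_Y)$ admits a good minimal model or a Mori fiber space; the latter is excluded since $K_Y+\Delta_Y$ is pseudo-effective (abundance), producing a good minimal model of $(Y,\Delta_Y)$, and therefore of $(X,\Delta)$.

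The main obstacle is constructing the Iitaka fibration as an honest morphism with $K_Y+\Delta_Y\sim_{\mathbb{R}}\pi^{*}D$ and $D$ big in the $\mathbb{R}$-boundary setting. For $\mathbb{Q}$-boundaries this is classical, but for $\mathbb{R}$-boundaries one has to work with the invariant Iitaka dimension for $\mathbb{R}$-Cartier divisors (Nakayama) and exhibit a decomposition of $K_X+\Delta$ into $\mathbb{Q}$-divisors sharing a common Iitaka fibration on a suitable log resolution. Maintaining crepant equivalence through these successive modifications, so that the descent of the minimal model back to $(X,\Delta)$ remains valid, is standard but also requires some care.
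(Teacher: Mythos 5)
Your overall strategy coincides with the paper's: pass to the Iitaka fibration of $K_{X}+\Delta$, reduce to a fibration with relatively trivial log canonical divisor over a base $V$ with $\dim V=\nu\leq 4$, and invoke Theorem \ref{corhas} with $d=3$ (where the hypothesis on $3$-dimensional klt pairs is known), the case $\dim V=4$ being covered because the induced divisor on the base is big. The endgame is fine: pseudo-effectivity rules out the Mori fiber space alternative.

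The gap is in the step you yourself flag as the main obstacle, and your proposed way through it would fail. Taking a log smooth model $(Y,\Gamma)$ on which the Iitaka fibration becomes a morphism $\pi\colon Y\to V$ only yields $\nu(Y/V,K_{Y}+\Gamma)=0$; it does \emph{not} give $K_{Y}+\Gamma\sim_{\mathbb{R}}\pi^{*}D$, and in general no model crepant over $(X,\Delta)$ admits such a relation. Passing from ``numerically trivial over $V$'' to ``$\mathbb{R}$-linearly trivial over $V$'' is exactly the content of the paper's Lemma \ref{lemfiber}: one first replaces $V$ by a weak semistable modification $V'$ (Abramovich--Karu) so that the fibration is equidimensional, writes $K+\Gamma\sim_{\mathbb{R},V'}E^{h}+E^{v}$ with $E^{v}$ very exceptional after subtracting a pullback, and then runs a relative MMP over $V'$ to contract $E^{h}+E^{v}$ using Birkar's very-exceptional lemma. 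This MMP destroys the crepant relation with $(X,\Delta)$; the equivalence of good minimal models between the original pair and the new fibration is recovered only through Lemma \ref{lembirequiv} (comparison via a common higher model with an effective exceptional discrepancy divisor), not by crepant descent. Your alternative suggestion --- decomposing $K_{X}+\Delta$ into $\mathbb{Q}$-divisors sharing a common Iitaka fibration --- does not address this: the obstruction is the effective part of $E$ that is vertical or exceptional over $V$, which is already present for $\mathbb{Q}$-boundaries. So as written the reduction to the hypotheses of Theorem \ref{corhas} ($K\sim_{\mathbb{R}}\pi^{*}D$) is not established; citing and applying Lemma \ref{lemfiber} together with Lemma \ref{lembirequiv} closes the gap and is precisely what the paper does.
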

For the definition of (relatively) abundant $\mathbb{R}$-Cartier $\mathbb{R}$-divisors, see Section \ref{sec1}. 
We note that in Theorem \ref{corflip} we do not assume the relative picard number is one. 
We hope that the main results and those other results have important applications in the minimal model theory.

The contents of this article are as follows: 
In Section \ref{sec1} we collect some notations and definitions, and some lemmas on existence of log minimal models. 
In Section \ref{sec2} we discuss the log MMP on fiber spaces on which the relative numerical dimension of the log canoincal divisor is zero. 
In Section \ref{sec3} we prove Theorem \ref{thm1.3}, Theorem \ref{thmhaconxu} and other results. 
%In the proof of Theorem \ref{thm1.3} in Subsection \ref{subsec4.1} we  directly apply \cite[Theorem 1.1]{birkar-flip}. In Section \ref{secapp}, which is the appendix, we prove a generalization of Theorem \ref{thm1.3} without  \cite[Theorem 1.1]{birkar-flip} or \cite[Theorem 1.1]{haconxu-lcc}. As a direct consequence, we see that we can in fact prove Theorem \ref{thm1.3} without  \cite[Theorem 1.1]{birkar-flip}. 

\begin{ack}
The author was partially supported by JSPS KAKENHI Grant Number JP16J05875 from JSPS. 
The author would like to express his gratitude to Professor Osamu Fujino for much useful advice, discussions, and answering his questions.
\end{ack}

\section{Preliminaries}\label{sec1}

\subsection{Notations and definitions}
%In this subsection we collect some notations and definitions. 
We will freely use the notations and definitions in \cite{bchm}. 

\begin{say}[Maps]
Let $f:X\to Y$ be a projective morphism from a normal variety to a variety.  
Then $f$ is a {\em contraction} if $f$ is surjective and it has connected fibers. 

Let $f:X\dashrightarrow Y$ be a birational map of normal varieties. 
Then $f$ is a {\em birational contraction} if $f^{-1}$ does not contract any divisors. 
Let $D$ be an $\mathbb{R}$-divisor on $X$. 
Unless otherwise stated, we mean $f_{*}D$ by denoting $D_{Y}$ or $(D)_{Y}$. 
\end{say}

\begin{say}[Relative numerical dimension and relatively abundant $\mathbb{R}$-divisors]\label{numeri}
Let $\pi:X\to Z$ be a projective surjective morphism from a normal variety to a variety with connected fibers, and let $F$ be a very general fiber of $\pi$. 
For any $\mathbb{R}$-Cartier $\mathbb{R}$-divisor $D$ on $X$, we denote $\nu(F,D\!\!\mid_{F})$ by $\nu(X/Z, D)$. 

Notations as above, we define the {\em invariant Iitaka dimension} of $D\!\!\mid_{F}$, which we denote $\kappa_{\iota}(F,D\!\!\mid_{F})$, as follows (see \cite[Definition 2.5.5]{fujino-book} and \cite[Definition 2.2]{gongyo1}): 
If there exists $E\geq 0$ such that $D\sim_{\mathbb{R},\,Z}E$, then we set $\kappa_{\iota}(F,D\!\!\mid_{F})=\kappa(F,E\!\!\mid_{F})$. 
Here right hand side is the usual Kodaira dimension of $E\!\!\mid_{F}$. 
Otherwise we set $\kappa_{\iota}(F,D\!\!\mid_{F})=-\infty$. 
%We can check that $\kappa_{\iota}(F,D\!\!\mid_{F})$ is well-defined. 

With the definition, $D$ is {\em relatively abundant} (or {\em abundant over} $Z$) if the equality $\nu(X/Z, D)=\kappa_{\iota}(F,D\!\!\mid_{F})$ holds. 
% where $\kappa_{\iota}(F,D\!\!\mid_{F})$ is the invariant Iitaka dimension of $D\!\!\mid_{F}$. 
%For the definition of invariant Iitaka dimension, see \cite[Definition 2.5.5]{fujino-book} and \cite[Definition 2.2]{gongyo1}. 
\end{say}

We introduce basic properties of relative numerical dimension. 
These follow from properties of usual numerical dimension.

\begin{lem}[cf.~{\cite[Lemma 2.2]{gongyo1}} and {\cite[V, 2.7 Proposition]{nakayama-zariski-decom}}]\label{lemnum}
Let $\pi:X\to Z$ and $D$ be as above. 
\begin{enumerate}
\item
$D$ is big over $Z$ if and only if $\nu(X/Z,D)={\rm dim}\,X-{\rm dim}\,Z$. 
\item
Let $D_{1}$ and $D_{2}$ be two $\mathbb{R}$-Cartier $\mathbb{R}$-divisors on $X$. 
Suppose that $D_{1}\sim_{\mathbb{R},\,Z}E_{1}$ and $D_{2}\sim_{\mathbb{R},\,Z}E_{2}$ for effective divisors $E_{1}$ and $E_{2}$ such that ${\rm Supp}\,E_{1}={\rm Supp}\,E_{2}$. 
Then $\nu(X/Z,D_{1})=\nu(X/Z, D_{2})$. 
\item
Let $f:Y \to X$ be a projective birational morphism from a normal variety and $D'$ be an $\mathbb{R}$-Cartier $\mathbb{R}$-divisor on $Y$ such that $D'$ is the sum of $f^{*}D$ and an effective $f$-exceptional divisor. 
Then $\nu(Y/Z,D')=\nu(X/Z, D)$.
\end{enumerate}
\end{lem}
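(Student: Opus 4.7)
The plan is to reduce each of the three assertions to the corresponding absolute statement on a very general fiber $F$ of $\pi$, and then invoke the quoted references \cite{gongyo1} and \cite{nakayama-zariski-decom} on $F$. In each case the only issue is to verify that a very general choice of $F$ preserves the relevant structure (support, $\mathbb{R}$-linear equivalence, $f$-exceptionality); these are standard.

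For (1), by definition $D$ is big over $Z$ if and only if $D|_{F}$ is big on $F$. The standard characterization of bigness in terms of numerical dimension (contained in \cite[V, 2.7 Proposition]{nakayama-zariski-decom}) gives that $D|_{F}$ is big if and only if $\nu(F,D|_{F})=\dim F$. Since $\dim F=\dim X-\dim Z$ and $\nu(X/Z,D)=\nu(F,D|_{F})$ by definition, this is exactly the claimed equivalence.

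For (2), choose $F$ very general enough that both $E_{1}|_{F}$ and $E_{2}|_{F}$ are effective divisors with $\mathrm{Supp}(E_{1}|_{F})=\mathrm{Supp}(E_{2}|_{F})$; this is possible because $\mathrm{Supp}\,E_{1}=\mathrm{Supp}\,E_{2}$ as subsets of $X$ and $F$ meets both in codimension preserved. The relation $D_{i}\sim_{\mathbb{R},\,Z}E_{i}$ restricts to $D_{i}|_{F}\sim_{\mathbb{R}}E_{i}|_{F}$ on $F$, because any pullback of an $\mathbb{R}$-Cartier divisor from $Z$ restricts trivially to a fiber. The absolute statement \cite[Lemma 2.2]{gongyo1} applied on $F$ now yields $\nu(F,D_{1}|_{F})=\nu(F,D_{2}|_{F})$, which is $\nu(X/Z,D_{1})=\nu(X/Z,D_{2})$.

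For (3), let $F$ be a very general fiber of $\pi$ and set $F'=f^{-1}(F)$, which is a very general fiber of $\pi\circ f$. For such $F$ the restriction $g:=f|_{F'}\colon F'\to F$ is a projective birational morphism of normal varieties, and writing $D'=f^{*}D+E$ with $E$ effective and $f$-exceptional we have $D'|_{F'}=g^{*}(D|_{F})+E|_{F'}$ with $E|_{F'}$ still effective and $g$-exceptional (since generic $F$ avoids the codimension $\geq 2$ image $f(\mathrm{Supp}\,E)\subset X$ outside a suitable locus). The absolute invariance under an effective exceptional perturbation \cite[V, 2.7 Proposition]{nakayama-zariski-decom} then gives $\nu(F',D'|_{F'})=\nu(F,D|_{F})$, that is $\nu(Y/Z,D')=\nu(X/Z,D)$. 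I do not anticipate any serious obstacle; the only care required is in the choice of the very general fiber, which is routine.
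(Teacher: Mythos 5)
Your proposal is correct and follows exactly the route the paper intends: the paper gives no written proof of Lemma \ref{lemnum}, only the remark that the statements ``follow from properties of usual numerical dimension'' together with the citations to \cite[Lemma 2.2]{gongyo1} and \cite[V, 2.7 Proposition]{nakayama-zariski-decom}, and your reduction to a very general fiber is precisely the implicit argument. The fiberwise verifications you note (preservation of supports, of $\mathbb{R}$-linear equivalence after killing the pullback from $Z$, and of codimension $\geq 2$ of the exceptional image on a very general fiber) are the right points to check and are carried out correctly.
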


\begin{say}[Singularities of pairs]
A {\em pair} $(X,\Delta)$ consists of a normal variety $X$ and a boundary $\mathbb{R}$-divisor, that is, an $\mathbb{R}$-divisor whose coefficients belong to $[0,1]$, on $X$ such that $K_{X}+\Delta$ is $\mathbb{R}$-Cartier. 

Let $(X, \Delta)$ be a pair and $f:Y\to X$ be a log resolution of $(X, \Delta)$. 
Then we can write 
$$K_Y=f^*(K_X+\Delta)+\sum_{i} a(E_{i}, X,\Delta) E_i$$ 
where $E_{i}$ are prime divisors on $Y$ and $a(E_{i}, X,\Delta)$ is a real number for any $i$. 
Then we call $a(E_{i}, X,\Delta)$ the {\em discrepancy} of $E_{i}$ with respect to $(X,\Delta)$. 
The pair $(X, \Delta)$ is called {\it Kawamata log terminal} ({\it klt}, for short) if $a(E_{i}, X, \Delta) > -1$ for any log resolution $f$ of $(X, \Delta)$ and any $E_{i}$ on $Y$. 
$(X, \Delta)$ is called {\it log canonical} ({\it lc}, for short) if $a(E_{i}, X, \Delta) \geq -1$ for any log resolution $f$ of $(X, \Delta)$ and any $E_{i}$ on $Y$. 
$(X, \Delta)$ is called {\it divisorial log terminal} ({\it dlt}, for short) if  there exists a log resolution $f:Y \to X$ of $(X, \Delta)$ such that $a(E, X, \Delta) > -1$ for any $f$-exceptional prime divisor $E$ on $Y$. 
If $(X,\Delta)$ is a log canonical pair and $P$ is a prime divisor over $X$ such that $a(P,X,\Delta)=-1$, then the image of $P$ on $X$ is called an {\em lc center} of $(X,\Delta)$. 
\end{say}

Next we define some models. 
Note that our definition of log minimal models is slightly different from that of \cite%[Definition 2.1]
{birkar-flip}. 
%In \cite{birkar-flip}, log minimal models are supposed to be dlt. 
%i.e.,
The difference is we do not assume that log minimal models are dlt. 
But this difference is intrinsically not important (cf.~\cite[Remark 2.7]{has-trivial}). 
In our definition, any weak lc model $(X',\Delta')$ (see Definition \ref{deflogmin} below) of a $\mathbb{Q}$-factorial lc pair $(X,\Delta)$ constructed with the $(K_{X}+\Delta)$-MMP
%obtained by running the $(K_{X}+\Delta)$-MMP on $\mathbb{Q}$-factorial lc pairs are
is a log minimal model of $(X,\Delta)$ even though $(X',\Delta')$ may not be dlt.

\begin{defn}[cf.~{\cite[Definition 2.5]{has-trivial}}, see also {\cite[Definition 2.1]{birkar-flip}}]\label{defmin}\label{deflogmin}
Let $\pi:X \to Z$ be a projective morphism from a normal variety to a variety and let $(X,\Delta)$ be a log canonical pair.  
Let $\pi ':X' \to Z$ be a projective morphism from a normal variety to $Z$ and $\phi:X \dashrightarrow X'$ be a birational map over $Z$. 
Let $E$ be the reduced $\phi^{-1}$-exceptional divisor on $X'$, that is, $E=\sum E_{j}$ where $E_{j}$ are $\phi^{-1}$-exceptional prime divisors on $X'$. 
Then the pair $(X', \Delta'=\phi_{*}\Delta+E)$ is called a {\em log birational model} of $(X,\Delta)$ over $Z$. 
A log birational model $(X', \Delta')$ of $(X,\Delta)$ over $Z$ is a {\em weak log canonical model} ({\em weak lc model}, for short) if 
\begin{itemize}
\item
$K_{X'}+\Delta'$ is nef over $Z$, and 
\item
for any prime divisor $D$ on $X$ which is exceptional over $X'$, we have
$$a(D, X, \Delta) \leq a(D, X', \Delta').$$ 
\end{itemize}
A weak lc model $(X',\Delta')$ of $(X,\Delta)$ over $Z$ is a {\em log minimal model} if 
\begin{itemize}
\item
$(X',\Delta')$ is $\mathbb{Q}$-factorial, and 
\item
the above inequality on discrepancies is strict. 
\end{itemize}
A log minimal model $(X',\Delta')$ of $(X, \Delta)$ over $Z$ is called a {\em good minimal model} if $K_{X'}+\Delta'$ is semi-ample over $Z$.    
\end{defn}

\begin{defn}[Log smooth models, cf.~{\cite[Definition 2.3]{birkar-flip}}]\label{deflogsm}
Let $(X,\Delta)$ be a log canonical pair and $f:Y \to X$ be a log resolution of $(X,\Delta)$. 
Let $\Gamma$ be a boundary $\mathbb{R}$-divisor on $Y$ such that $(Y,\Gamma)$ is log smooth. 
Then $(Y,\Gamma)$ is a {\em log smooth model} of $(X,\Delta)$ if we can write 
$$K_{Y}+\Gamma=f^{*}(K_{X}+\Delta)+F$$
with an effective $f$-exceptional divisor $F$ such that
\begin{enumerate}
\item[$\bullet$] 
every $f$-exceptional prime divisor $E$ satisfying $a(E,X,\Delta)>-1$ is a component of $F$ and $\Gamma-\llcorner \Gamma \lrcorner$.  
\end{enumerate}
%When $\Delta$ is a $\mathbb{Q}$-divisor and $f:Y \to X$ is a log resolution of $(X,\Delta)$, we can find a $\mathbb{Q}$-divisor $\Gamma$ on $Y$ such that $(Y,\Gamma)$ is a log smooth model of $(X,\Delta)$. 
\end{defn}

\subsection{Properties and remarks on models}
Here we mainly introduce sufficient or equivalent conditions on existence of log minimal models. 

Let $\pi:X \to Z$ be a projective morphism of normal quasi-projective varieties and let $(X,\Delta)$ be a log canonical pair.  
By Definition \ref{defmin} all log minimal models are weak lc models. 
Furthermore we see that:

\begin{rem}[cf.~{\cite[Corollary 3.7]{birkar-flip}}]\label{thmlogmin}
If $(X,\Delta)$ has a weak lc model over $Z$, then $(X,\Delta)$ has a log minimal model over $Z$.
\end{rem}

%We have some remarks of those models (cf.~\cite{birkar-flip}).

\begin{rem}[cf.~{\cite[Remark 2.8]{birkar-flip}}]\label{remlogsm}
To find a log minimal model of $(X,\Delta)$ over $Z$, we can replace $(X,\Delta)$ with its log smooth model. 
\end{rem}

\begin{rem}[cf.~{\cite[Remark 2.7]{birkar-flip}}]\label{remtwoweak}
%Let $\pi:X \to Z$ be a projective morphism from a normal variety to a variety and let $(X,\Delta)$ be a log canonical pair.  
Let $(X',\Delta')$ and $(X'',\Delta'')$ be two weak lc models of $(X,\Delta)$ over $Z$ and let $g':W\to X'$ and $g'':W\to X''$ be a common resolution of the induced birational map $X'\dashrightarrow X''$. 
Then we have 
$$g'^{*}(K_{X'}+\Delta')=g''^{*}(K_{X''}+\Delta'').$$
In particular, if there is a weak lc model of $(X,\Delta)$ over $Z$ with relatively semi-ample log canonical divisor, then any log canonical divisor of a weak lc model of $(X,\Delta)$ is relatively semi-ample. 
\end{rem}

By combining Remark \ref{thmlogmin}, Remark \ref{remlogsm} and Remark \ref{remtwoweak}, we have:
 
\begin{lem}\label{lemweakmin}
To show the existence of  good minimal models of $(X,\Delta)$ over $Z$, it is sufficient to find a weak lc model of $(X,\Delta)$ over $Z$ with relatively semi-ample log canonical divisor.
Moreover we can freely take log smooth models or dlt blow-ups and replace $(X,\Delta)$ by those models.  
\end{lem}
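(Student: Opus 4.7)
The plan is to combine the three remarks that immediately precede the lemma; no new geometric input is needed. For the first assertion, suppose we are given a weak lc model $(X',\Delta')$ of $(X,\Delta)$ over $Z$ with $K_{X'}+\Delta'$ semi-ample over $Z$. Since a weak lc model exists in particular, Remark \ref{thmlogmin} produces a log minimal model $(X'',\Delta'')$ of $(X,\Delta)$ over $Z$, which is itself a weak lc model by Definition \ref{deflogmin}. Applying Remark \ref{remtwoweak} to the pair of weak lc models $(X',\Delta')$ and $(X'',\Delta'')$, a common resolution $g':W\to X'$, $g'':W\to X''$ satisfies
$$g'^{*}(K_{X'}+\Delta')=g''^{*}(K_{X''}+\Delta''),$$
and the last sentence of that remark directly yields that $K_{X''}+\Delta''$ is semi-ample over $Z$. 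Hence $(X'',\Delta'')$ is a good minimal model.

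For the second assertion I would handle log smooth models and dlt blow-ups separately. The log smooth model case is immediate from Remark \ref{remlogsm}: any log minimal model of a log smooth model $(Y,\Gamma)$ of $(X,\Delta)$ over $Z$ is a log minimal model of $(X,\Delta)$ over $Z$, so together with the first assertion, finding a weak lc model of $(Y,\Gamma)$ with semi-ample log canonical divisor suffices. For a dlt blow-up $f:(Y,\Gamma)\to(X,\Delta)$ we have the crepant identity $K_{Y}+\Gamma=f^{*}(K_{X}+\Delta)$, so any weak lc model of $(Y,\Gamma)$ over $Z$ is automatically a weak lc model of $(X,\Delta)$ over $Z$ (the discrepancy inequality over exceptional divisors of the induced map is preserved because discrepancies for $(Y,\Gamma)$ and $(X,\Delta)$ agree), and the relative semi-ampleness of the log canonical divisor is transferred. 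Invoking the first assertion on $(Y,\Gamma)$ then completes the argument.

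There is no real obstacle here; the statement is essentially a bookkeeping consequence of Remarks \ref{thmlogmin}, \ref{remlogsm}, and \ref{remtwoweak}. The only point worth flagging is that dlt blow-ups are not literally covered by Remark \ref{remlogsm} (they are generally not log smooth), so one must note independently that a crepant birational model shares its weak lc models with the original pair—this is standard once one writes $K_{Y}+\Gamma=f^{*}(K_{X}+\Delta)$.
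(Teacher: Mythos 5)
Your proposal is correct and follows essentially the same route as the paper, which gives no separate proof of this lemma beyond the phrase ``by combining Remark \ref{thmlogmin}, Remark \ref{remlogsm} and Remark \ref{remtwoweak}''; your argument is exactly that combination, with the dlt blow-up case handled by the crepant identity $K_{Y}+\Gamma=f^{*}(K_{X}+\Delta)$ as one would expect. The point you flag about dlt blow-ups not being literally covered by Remark \ref{remlogsm} is a fair observation, and your resolution of it is the standard and intended one.
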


Next we state properties of log smooth models. 
%Let $\pi:X\to Z$ and $(X,\Delta)$ be as above.

\begin{rem}%[Properties of log smoothmodels]
$f:(Y,\Gamma)\to (X,\Delta)$ be a log smooth model of a log canonical pair $(X,\Delta)$, and let $F$ be an effective $f$-exceptional divisor as in Definition \ref{deflogsm}. 
By Definition \ref{deflogsm} we see that 
\begin{itemize}
\item
${\rm Supp}\,\Gamma ={\rm Supp}\,f_{*}^{-1}\Delta \cup {\rm Ex}\,(f)$, 
\item
the image of any lc center of $(Y,\Gamma)$ on $X$ is an lc center of $(X,\Delta)$, and  
\item
for any $f$-exceptional prime divisor $E$, $E$ is a component of $F$ if and only if $a(E,X,\Delta)>-1$. 
\end{itemize}
\end{rem}

Here we introduce an important theorem by Birkar \cite{birkar-flip} and a lemma about the log MMP with scaling.

\begin{thm}[cf.~{\cite[Theorem 4.1]{birkar-flip}}]\label{thmtermi}
Let $(X,\Delta)$ be a $\mathbb{Q}$-factorial log canonical pair such that $(X,0)$ is klt, and let $\pi:X \to Z$ be a projective morphism of normal quasi-projective varieties. 
If there exists a log minimal model of $(X,\Delta)$ over $Z$, then any $(K_{X}+\Delta)$-MMP over $Z$ with scaling of an ample divisor terminates. 
\end{thm}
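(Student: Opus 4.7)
\emph{Plan.} The plan is to follow Birkar's standard approach, splitting into cases based on the limit of the scaling thresholds and applying \cite{bchm} in one case and the given log minimal model in the other.

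By Lemma~\ref{lemweakmin} we may replace $(X,\Delta)$ with a log smooth model, so $(X,\Delta)$ is $\mathbb{Q}$-factorial log smooth dlt and $X$ is smooth (hence the hypothesis $(X,0)$ klt is automatic). Write the given $(K_X+\Delta)$-MMP over $Z$ with scaling of an ample $\mathbb{R}$-divisor $A$ as
\[
(X,\Delta)=(X_0,\Delta_0)\dashrightarrow (X_1,\Delta_1)\dashrightarrow \cdots,
\]
with non-increasing scaling thresholds $\lambda_0\ge \lambda_1\ge \cdots \ge 0$. Only finitely many divisorial contractions occur (each strictly drops the relative Picard number), so after discarding a finite initial segment every step is a flip and each $X_i$ is $\mathbb{Q}$-factorial. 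Assume for contradiction the MMP is infinite and set $\lambda=\lim_{i\to\infty}\lambda_i\ge 0$.

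\emph{Case $\lambda>0$.} For $i\gg 0$ every flip is also a $(K_{X_i}+\Delta_i+sA_i)$-negative flip for some $s\in(0,\lambda)$, e.g. $s=\lambda/2$ (using $(K_{X_i}+\Delta_i+\lambda_i A_i)\cdot R=0$ with $A_i\cdot R>0$ on the contracted ray). Since $(X,0)$ is klt, after replacing $A$ by a small positive multiple (which does not alter the MMP, only rescales the labels $\lambda_i$) we can find, for suitably small $\epsilon,s>0$, a klt pair $(X,\Delta')$ with $\Delta'=(1-\epsilon)\Delta+sA$ whose log canonical divisor is big over $Z$ thanks to the ample summand $sA$, and such that the tail of the original MMP is simultaneously a $(K_X+\Delta')$-MMP with scaling. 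By~\cite{bchm} this MMP terminates, contradicting infiniteness.

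\emph{Case $\lambda=0$.} Use the given log minimal model $(Y,\Gamma)$ and the induced birational map $\psi_i:X_i\dashrightarrow Y$, and take a common log resolution $p_i:W_i\to X_i$, $q_i:W_i\to Y$. Apply the negativity lemma to
\[
E_i=p_i^{*}(K_{X_i}+\Delta_i)-q_i^{*}(K_Y+\Gamma)
\]
using that $K_Y+\Gamma$ is nef over $Z$, that $K_{X_i}+\Delta_i+\lambda_i A_i$ is nef over $Z$ with $\lambda_i\to 0$, and the discrepancy inequality defining a weak lc model (Definition~\ref{deflogmin}). For $i\gg 0$ this forces $E_i=0$, whence $K_{X_i}+\Delta_i$ is nef over $Z$, i.e.\ $\lambda_i=0$, the desired contradiction. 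The technically hardest step is exactly this case: upgrading the asymptotic vanishing $\lambda_i\to 0$ into the exact identity $E_i=0$ at a finite stage. The key is to control the error coming from $\lambda_i\,p_i^{*}A_i$ in the pullback comparison, exploiting $\mathbb{Q}$-factoriality of $X_i$, the weak lc model discrepancy structure, and the finiteness of the set of prime divisors that can appear in the supports of the various $E_i$.
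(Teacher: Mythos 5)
The paper does not actually prove this statement: Theorem \ref{thmtermi} is quoted from \cite[Theorem 4.1]{birkar-flip} and used as a black box, so the only proof to compare yours against is Birkar's original one, whose overall shape (split on $\lambda=\lim\lambda_i$, reduce the case $\lambda>0$ to \cite{bchm}, compare with the given log minimal model when $\lambda=0$) your sketch does follow. Nevertheless there are genuine gaps. First, the opening reduction is illegitimate: the theorem asserts termination of an arbitrary \emph{given} $(K_X+\Delta)$-MMP with scaling on the \emph{given} $X$, and replacing $(X,\Delta)$ by a log smooth model produces a different variety carrying a different MMP. Lemma \ref{lemweakmin} only transfers the \emph{existence} of weak lc/minimal models across such replacements, not the termination of a fixed sequence of flips; this is precisely why the hypothesis that $(X,0)$ be klt appears in the statement instead of being arranged by passing to a resolution.

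Second, and more seriously, the case $\lambda=0$ --- the entire content of the theorem once $\lambda>0$ is disposed of --- is not proved. The claim that negativity ``forces $E_i=0$ for $i\gg0$'' is logically equivalent to the conclusion: $E_i=0$ holds iff $K_{X_i}+\Delta_i=p_{i*}q_i^{*}(K_Y+\Gamma)$ is nef over $Z$, i.e.\ iff $\lambda_i=0$, so if the MMP were infinite one would have $E_i\neq0$ for every $i$ and no contradiction has been exhibited. The negativity lemma alone only gives $E_i\geq0$ and exceptionality over $Y$ (and even that requires checking that $X_i\dashrightarrow Y$ contracts no divisors and that the pushforward of $E_i$ to $Y$ is effective); to conclude $E_i=0$ one needs the extra input that $K_{X_i}+\Delta_i$ is a limit of movable divisors over $Z$ together with the vanishing statement for effective very exceptional divisors that are limits of movable divisors (\cite[Lemma 3.3]{birkar-flip}), or an equivalent finiteness-of-models argument. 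None of this appears; your closing sentence explicitly defers ``the technically hardest step'' rather than carrying it out. A smaller issue in the case $\lambda>0$: what \cite{bchm} provides is termination with scaling for klt pairs whose \emph{boundary} is big over $Z$ (here $\Delta'\geq sA$ with $A$ ample suffices), not pairs whose log canonical divisor is big --- the latter is neither the right hypothesis nor achievable in general --- and one must also verify that the $-\epsilon\Delta$ perturbation does not destroy negativity on the contracted rays. Both points are fixable by the standard length-of-extremal-rays argument, but they are not addressed.
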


%By Remark \ref{thmlogmin} and Theorem \ref{thmtermi}, existence of a weak lc model of $(X,\Delta)$ over $Z$ is equivalent to termination of any $(K_{X}+\Delta)$-MMP over $Z$ with scaling of an ample divisor.

\begin{lem}\label{lemmmp}
Let $(X,\Delta)$ be a $\mathbb{Q}$-factorial log canonical pair such that $(X,0)$ is klt, and let $\pi:X \to Z$ be a projective morphism of normal quasi-projective varieties.  
Let $H\geq 0$ be an $\mathbb{R}$-divisor on $X$ such that $(X,\Delta+H)$ is log canonical and $K_{X}+\Delta+H$ is nef over $Z$. 
Assume that for any $0<\mu\leq1$ the pair $(X,\Delta+\mu H)$ has a log minimal model over $Z$.

Then we can construct a sequence of steps of the $(K_{X}+\Delta)$-MMP over $Z$ with scaling of $H$ 
$$(X=X^{0},\Delta=\Delta_{X^{0}})\dashrightarrow\cdots \dashrightarrow (X^{i},\Delta_{X^{i}})\dashrightarrow \cdots$$
such that if we set
$\lambda_{i}={\rm inf}\{\nu\geq0 \mid K_{X^{i}}+\Delta_{X^{i}}+\nu H_{{X^{i}}}{\rm \;is\;nef\;over\;}Z\},$ 
then the $(K_{X}+\Delta)$-MMP terminates after finitely many steps or we have ${\rm lim}_{i \to \infty}\lambda_{i}=0$ even if the $(K_{X}+\Delta)$-MMP does not terminates. 
\end{lem}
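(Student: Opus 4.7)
The plan is to build the MMP inductively, to verify that the sequence $\{\lambda_i\}$ is non-increasing, and then to rule out the case $\lim\lambda_i>0$ when the MMP fails to terminate by reducing to Theorem~\ref{thmtermi} applied to the pair $(X,\Delta+\mu H)$ for a suitable $\mu$. Concretely, at step $i$ assume we have $(X^i,\Delta_{X^i})$ with $K_{X^i}+\Delta_{X^i}+\lambda_i H_{X^i}$ nef over $Z$; if $K_{X^i}+\Delta_{X^i}$ is already nef over $Z$ the MMP terminates, and otherwise $\lambda_i>0$. Since $(X,0)$ klt is preserved throughout the MMP (one may perturb $\Delta_{X^i}$ slightly to a klt boundary), the cone theorem produces a $(K_{X^i}+\Delta_{X^i})$-negative extremal ray $R_i$ on which $K_{X^i}+\Delta_{X^i}+\lambda_i H_{X^i}$ vanishes, and its divisorial contraction or flip exists (flips via BCHM applied to the klt perturbation). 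A standard pushforward argument using $(K_{X^i}+\Delta_{X^i}+\lambda_i H_{X^i})\cdot R_i=0$ gives $\lambda_{i+1}\leq\lambda_i$.

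Now suppose the MMP runs forever and set $\lambda:=\lim\lambda_i$, and assume for contradiction $\lambda>0$. Fix a rational $\mu\in(0,\lambda)$. From $(K_{X^i}+\Delta_{X^i})\cdot R_i<0$ and $(K_{X^i}+\Delta_{X^i}+\lambda_i H_{X^i})\cdot R_i=0$ one deduces $H_{X^i}\cdot R_i>0$, and consequently $(K_{X^i}+\Delta_{X^i}+\mu H_{X^i})\cdot R_i=-(\lambda_i-\mu)H_{X^i}\cdot R_i<0$. So the same sequence of contractions and flips is a $(K_X+\Delta+\mu H)$-MMP over $Z$ with scaling of $H$, whose scaling numbers $\lambda_i-\mu$ are bounded below by $\lambda-\mu>0$. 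By hypothesis $(X,\Delta+\mu H)$ has a log minimal model over $Z$, so Theorem~\ref{thmtermi} is available in principle.

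Because Theorem~\ref{thmtermi} concerns scaling of an ample divisor while our scaling divisor is $H$, I bridge the gap by fixing a relatively ample $\mathbb{R}$-divisor $A$ on $X$ and a sufficiently small $\epsilon>0$ such that $H+\epsilon A$ is ample over $Z$ and such that the constructed sequence can be realized as a $(K_X+\Delta+\mu H)$-MMP over $Z$ with scaling of $H+\epsilon A$; the lower bound $\lambda-\mu>0$ on the scaling numbers ensures that the extremal rays $R_i$ remain unchanged under this perturbation. Theorem~\ref{thmtermi} then gives termination, contradicting the assumption, so $\lim\lambda_i=0$. The main obstacle is precisely this perturbation argument, since identifying an MMP with scaling of a non-ample divisor with one with scaling of an ample divisor requires a careful analysis of how the extremal rays depend on the scaling divisor, and it is here that the positivity $\lambda-\mu>0$ is essential.
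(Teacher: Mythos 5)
Your setup (inductive construction, $\lambda_{i+1}\leq\lambda_i$, reduction to the pair $(X,\Delta+\mu H)$ with $\mu<\lambda$) is reasonable, but the step you yourself flag as the main obstacle is a genuine gap, and the proposed fix does not close it. An MMP with scaling of $H$ chooses at each stage an extremal ray $R_i$ on which $K_{X^i}+\Delta_{X^i}+\lambda_i H_{X^i}$ is trivial, where $\lambda_i$ is the nef threshold \emph{with respect to} $H_{X^i}$. Replacing $H$ by $H+\epsilon A$ changes which ray computes the threshold: the ray minimizing $s$ such that $K+\Delta+\mu H+s(H_{X^i}+\epsilon A_{X^i})$ is nef is in general not $R_i$, and even on $R_i$ the vanishing coefficient $s=(\lambda_i-\mu)H\cdot R_i/(H\cdot R_i+\epsilon A\cdot R_i)$ depends on the ratio $A\cdot R_i/H\cdot R_i$, which varies over the infinitely many steps (note also that $A_{X^i}$ is no longer ample after the first flip, so its intersection with $R_i$ is uncontrolled). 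No single $\epsilon>0$ can be chosen so that the identification holds for all $i$, and the lower bound $\lambda-\mu>0$ on the scaling numbers does not help: it bounds the thresholds for $H$, not the discrepancy between the $H$-threshold and the $(H+\epsilon A)$-threshold. So you cannot realize your sequence as an MMP with scaling of an ample divisor, and Theorem~\ref{thmtermi} does not apply to it as constructed.

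The paper avoids this by running the implications in the opposite direction. It does not fix an arbitrary $H$-scaling MMP and then try to reinterpret it; instead it \emph{builds} the required sequence by concatenating MMPs that are already of the type covered by Theorem~\ref{thmtermi}: starting from the current threshold $\lambda_{k_j}$, pick $\lambda'_j<\lambda_{k_j}$ sufficiently close to $\lambda_{k_j}$ and run the $(K+\Delta+\lambda'_j H)$-MMP with scaling of an ample divisor, which terminates by Theorem~\ref{thmtermi} and the hypothesis. Because $\lambda'_j$ is close to $\lambda_{k_j}$, every ray contracted in that run is $(K+\Delta+\lambda_{k_j}H)$-trivial (via the length bound on extremal rays), so each of its steps is automatically a step of the $(K+\Delta)$-MMP with scaling of $H$ and the thresholds satisfy $\lambda_{k_j}=\cdots=\lambda_{k_{j+1}-1}>\lambda_{k_{j+1}}$. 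The limit $\lambda=\lim\lambda_j>0$ is then excluded not by termination but by \cite[Theorem 4.1(iii)]{birkar-flip} applied to $(X,\Delta+\lambda H)$, which forces $\lambda=\lambda_j$ for some $j$, contradicting the strict decrease. Since the lemma only asserts the existence of one such sequence, this constructive route is exactly what is needed; if you want to keep your strategy, you would have to supply the missing comparison of extremal rays, which I do not believe can be done uniformly in $i$.
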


\begin{proof}
%Set $\lambda_{0}=\{\nu\geq0 \mid K_{X}+\Delta+\nu H{\rm \;is\;nef\;over\;}Z\}$. 
Throughout the proof let $\lambda_{i}$ be as in the lemma. 
If $\lambda_{0}=0$, there is nothing to prove. 
So assume $\lambda_{0}>0$. 
Pick a positive real number $\lambda'_{0}<\lambda_{0}$ sufficiently close to $\lambda_{0}$,  and run the $(K_{X}+\Delta+\lambda'_{0}H)$-MMP over $Z$ with scaling of an ample divisor. 
By Theorem \ref{thmtermi} and hypothesis, this MMP terminates with a log minimal model 
$$(X,\Delta+\lambda'_{0}H)\dashrightarrow (X^{k_{1}},\Delta_{X^{k_{1}}}+\lambda'_{0}H_{X^{k_{1}}})$$
over $Z$. 
Since we chose $\lambda'_{0}$ sufficiently close to $\lambda_{0}$, for any $0\leq i<k_{1}$, $K_{X^{i}}+\Delta_{X^{i}}+\lambda_{0}H_{X^{i}}$ is trivial over the extremal contraction. 
Then we see that the birational map $(X,\Delta)\dashrightarrow (X^{k_{1}},\Delta_{X^{k_{1}}})$ is a finitely many steps of the $(K_{X}+\Delta)$-MMP over $Z$ with scaling of $H$, and we have $\lambda_{0}= \lambda_{1}= \cdots =\lambda_{k_{1}-1}$. 
Moreover $\lambda_{0}> \lambda'_{0}\geq\lambda_{k_{1}}$ 
by construction, 
 and hence $\lambda_{k_{1}-1}>\lambda_{k_{1}}$. 
If $\lambda_{k_{1}}=0$ then we stop constructing the sequence of steps of the $(K_{X}+\Delta)$-MMP over $Z$. 
If $\lambda_{k_{1}}>0$ then pick a positive real number $\lambda'_{1}<\lambda_{k_{1}}$ sufficiently close to $\lambda_{k_{1}}$, and run the $(K_{X^{k_{1}}}+\Delta_{X^{k_{1}}}+\lambda'_{1}H_{X^{k_{1}}})$-MMP over $Z$ with scaling of an ample divisor. 

By repeating the above discussion, we can construct a sequence of steps of the $(K_{X}+\Delta)$-MMP over $Z$ with scaling of $H$ 
$$(X=X^{0},\Delta=\Delta_{X^{0}})\dashrightarrow\cdots \dashrightarrow (X^{k_{i}},\Delta_{X^{k_{i}}})\dashrightarrow \cdots$$
such that $\lambda_{0}\geq\cdots \geq\lambda_{k_{1}-1}>\lambda_{k_{1}}\geq\cdots \geq\lambda_{k_{i}-1}>\lambda_{k_{i}}\geq\cdots$. 
If $\lambda_{j}=0$ for some $j\geq0$, then we complete the proof. 
If $\lambda_{j}>0$ for all $j\geq0$, then we have to check ${\rm lim}_{j\to \infty}\lambda_{j}=0$. 
To see this, we put $\lambda={\rm lim}_{j\to \infty}\lambda_{j}$.
By construction we have $\lambda\neq \lambda_{j}$ for any $j$. 
If $\lambda>0$, then $(X,\Delta+\lambda H)$ has a log minimal model over $Z$ by hypothesis. 
Then we see that $\lambda=\lambda_{j}$ for some $j$ (cf.~\cite[Theorem 4.1(iii)]{birkar-flip}), which is a contradiction. 
Therefore ${\rm lim}_{j\to \infty}\lambda_{j}=0$ and so we are done. 
\end{proof}

Finally we introduce the following useful lemma. 
Roughly speaking it is $\mathbb{R}$-boundary divisor version of \cite[Lemma 2.10]{haconxu-lcc}. 

\begin{lem}\label{lembirequiv}
Let $\pi:X \to Z$ be a projective morphism of normal quasi-projective varieties, and let $(X,\Delta)$ be a log canonical pair. 
Let $(Y,\Gamma)$ be a log canonical pair such that we have a projective birational morphism $f:Y\to X$ and $K_{Y}+\Gamma=f^{*}(K_{X}+\Delta)+E$ for an effective $f$-exceptional divisor $E$. 

Then $(X,\Delta)$ has a weak lc model (resp.~a log minimal model, a good minimal model) over $Z$ if and only if  $(Y,\Gamma)$ has a weak lc model (resp.~a log minimal model, a good minimal model) over $Z$. 
\end{lem}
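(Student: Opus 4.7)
Given a weak lc model $(X',\Delta')$ of $(X,\Delta)$ over $Z$ with birational map $\phi\colon X\dashrightarrow X'$, my claim is that $(X',\Delta')$ itself serves as a weak lc model of $(Y,\Gamma)$ over $Z$ via the composition $\psi:=\phi\circ f\colon Y\dashrightarrow X'$. Applying $f_{*}$ to $K_{Y}+\Gamma=f^{*}(K_{X}+\Delta)+E$ and using that $E$ is $f$-exceptional yields $f_{*}\Gamma=\Delta$, hence $\psi_{*}\Gamma=\phi_{*}\Delta$. Since $f$ is a birational morphism, the $\psi^{-1}$-exceptional and $\phi^{-1}$-exceptional prime divisors on $X'$ coincide, so $\Delta'$ is also the log birational model boundary of $(Y,\Gamma)$ on $X'$. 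Nefness of $K_{X'}+\Delta'$ is preserved automatically. For the discrepancy inequality, take a prime divisor $D$ on $Y$ exceptional over $X'$: if $D$ is not $f$-exceptional, then $D=f^{-1}_{*}\bar{D}$ for some prime divisor $\bar{D}$ on $X$ and the discrepancies match ($a(D,Y,\Gamma)=a(\bar{D},X,\Delta)$ via $f_{*}\Gamma=\Delta$ and $a(D,X',\Delta')=a(\bar{D},X',\Delta')$), reducing to the weak lc condition for $\bar{D}$; if $D$ is $f$-exceptional, combine $a(D,Y,\Gamma)=a(D,X,\Delta)-\mathrm{mult}_{D}E\leq a(D,X,\Delta)$ with the extended inequality $a(D,X,\Delta)\leq a(D,X',\Delta')$, valid for any divisor over $X$ exceptional over $X'$ (a standard consequence of the weak lc definition via the negativity lemma on a common resolution). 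The log minimal and good minimal variants follow by preservation of $\mathbb{Q}$-factoriality, the strict version of the discrepancy inequality, and relative semi-ampleness of $K_{X'}+\Delta'$.

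\textbf{Reverse direction.} Given a weak lc model $(Y',\Gamma')$ of $(Y,\Gamma)$ over $Z$ with birational contraction $\psi\colon Y\dashrightarrow Y'$, the composition $\varphi:=\psi\circ f^{-1}\colon X\dashrightarrow Y'$ generally fails to be a birational contraction: its $\varphi^{-1}$-exceptional divisors on $Y'$ decompose as $\mathcal{E}_{\psi}\cup\mathcal{F}$, where $\mathcal{F}$ consists of prime divisors on $Y'$ whose strict transforms on $Y$ are $f$-exceptional (i.e.\ $f$-exceptional divisors of $Y$ surviving the birational map to $Y'$). The induced log birational model boundary of $(X,\Delta)$ on $Y'$ becomes $\Delta^{\sharp}=\Gamma'+\sum_{P\in\mathcal{F}}(1-c_{P})P$ with $c_{P}=\mathrm{mult}_{\psi^{-1}_{*}P}\Gamma$. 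When $\mathcal{F}\neq\emptyset$, both the nef condition on $K_{Y'}+\Delta^{\sharp}$ and the discrepancy inequality for prime divisors on $X$ whose center lies in some $P\in\mathcal{F}$ can be problematic, so the naive candidate $(Y',\Delta^{\sharp})$ does not directly suffice.

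\textbf{Main obstacle and plan.} The hard part is thus to reduce to the case $\mathcal{F}=\emptyset$. My plan is as follows: first, via Lemma \ref{lemweakmin}, replace $(X,\Delta)$ and $(Y,\Gamma)$ with $\mathbb{Q}$-factorial dlt log smooth models (this does not change the existence of weak lc / log minimal / good minimal models on either side while preserving the pullback relation); second, using Remark \ref{thmlogmin}, upgrade $(Y',\Gamma')$ to a $\mathbb{Q}$-factorial dlt log minimal model; third, run an auxiliary $(K_{Y'}+\Gamma'+t\sum_{P\in\mathcal{F}}P)$-MMP over $Z$ for small $t>0$ with scaling by an ample divisor, invoking Lemma \ref{lemmmp}, so as to contract the $\mathcal{F}$ divisors. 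Once $\mathcal{F}$ is emptied, $\varphi$ becomes a birational contraction, $\Delta^{\sharp}=\Gamma'$, and $(Y',\Gamma')$ serves as a weak lc model of $(X,\Delta)$ by an argument symmetric to the forward direction. The technically subtle step is verifying termination of this auxiliary MMP and checking that each step preserves both the weak lc model structure on $(Y,\Gamma)$ and the hypothesis $K_{Y''}+\Gamma''=g^{*}(K_{X}+\Delta)+E''$ on intermediate models (so that the forward-direction argument can be reapplied at the end); this is where the bulk of the technical work lies, and it is the main obstacle of the proof.
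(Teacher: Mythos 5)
Your forward direction is essentially the paper's (the paper also just asserts that a weak lc model of $(X,\Delta)$ is a weak lc model of $(Y,\Gamma)$, after first reducing to the weak lc case via Lemma \ref{lemweakmin} and normalizing both pairs to $\mathbb{Q}$-factorial dlt models with $\bigl(Y,\Supp f_{*}^{-1}\Delta\cup \Exc(f)\bigr)$ log smooth), and the discrepancy bookkeeping you give is correct. The problem is the reverse direction, where you explicitly defer ``the bulk of the technical work'': what you leave open is not a routine verification but the actual content of the lemma, and the route you sketch has concrete obstructions. First, to invoke Lemma \ref{lemmmp} for the auxiliary $\bigl(K_{Y'}+\Gamma'+t\sum_{P\in\mathcal{F}}P\bigr)$-MMP you must already know that all the scaled pairs admit log minimal models over $Z$; this is not available and is essentially of the same difficulty as the statement being proved. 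Second, even granting that this MMP runs and terminates, nothing forces it to contract precisely the divisors in $\mathcal{F}$: the standard mechanism for contracting such divisors (very exceptional divisors plus \cite[Lemma 3.3]{birkar-flip}, or the negativity lemma) needs a morphism to $X$ along which $E$ is exceptional, and that structure is lost on $Y'$, which only maps to $Z$. Third, adding $tP$ to $\Gamma'$ destroys log canonicity whenever $P$ has coefficient one in $\Gamma'$, which does happen for $f$-exceptional divisors with discrepancy $-1$.

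The paper takes a different and self-contained route for the reverse direction: it never tries to contract the surviving $f$-exceptional divisors on a model of $Y$. Instead it perturbs by a general ample divisor $A$ over $Z$, producing klt pairs $(X,\Theta_{t})$ with $\Theta_{t}\sim_{\mathbb{R},\,Z}\Delta+tA$ big over $Z$ and corresponding klt pairs $(Y,\Psi_{t})$ with $K_{Y}+\Psi_{t}=f^{*}(K_{X}+\Theta_{t})+tE'+(1-t)E$. It runs the $(K_{X}+\Delta)$-MMP over $Z$ with scaling of $A$ and, assuming all $\lambda_{i}>0$, picks $n$ with $\lambda_{n}<\lambda_{n-1}\leq t_{0}$. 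For these two distinct parameter values, both $X^{n}$ and a fixed model $Y''$ (obtained from the terminating MMP on $(Y,\Gamma)$ followed by a BCHM run on the big klt perturbation) carry weak lc models of the same pairs, so Remark \ref{remtwoweak} identifies the pullbacks of the two log canonical divisors on a common resolution; since $\Delta=\Theta_{0}$ is an $\mathbb{R}$-linear combination of $\Theta_{\lambda_{n-1}}$ and $\Theta_{\lambda_{n}}$, linearity forces $g^{*}(K_{X^{n}}+\Delta_{X^{n}})=h^{*}(K_{Y''}+\Gamma_{Y''})$, hence $K_{X^{n}}+\Delta_{X^{n}}$ is nef over $Z$, contradicting $\lambda_{n}>0$. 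You would need to either carry out this kind of interpolation argument or find a genuine substitute for the termination-and-contraction step you have left open.
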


\begin{proof}
By Lemma \ref{lemweakmin} it is sufficient to show the lemma in the weak lc model case, and moreover we can freely take log smooth models or dlt blow-ups.

First we take a dlt blow-up $\phi_{X}:\bigl(\widetilde{X},\widetilde{\Delta}\bigr) \to (X,\Delta)$ and next we take a common resolution $\phi_{Y}:\widetilde{Y}\to Y$ and $\widetilde{f}:\widetilde{Y}\to \widetilde{X}$ such that $\phi_{Y}$ and $\widetilde{f}$ are log resolutions of $(Y,\Gamma)$ and $\bigl(\widetilde{X},\widetilde{\Delta}\bigr)$ respectively. 
Let $\bigl(\widetilde{Y},\widetilde{\Gamma}\bigr)$ be a log smooth model of $(Y,\Gamma)$. 
%$$\xymatrix{(Y,\Gamma') \ar[d]_{\phi_{Y}}\ar[r]^{f'}&(X'\Delta')\ar[d]\ar[d]^{\phi_{X}}\\(Y,\Gamma)\ar[r]_{f}&(X,\Delta)}$$
Then for some $\phi_{Y}$-exceptional divisor $F$ we have 
\begin{equation*}
\begin{split}
K_{\widetilde{Y}}+\widetilde{\Gamma}=\phi_{Y}^{*}(K_{Y}+\Gamma)+F=&(f\circ \phi_{Y})^{*}(K_{X}+\Delta)+\phi_{Y}^{*}E+F\\
=&{\widetilde{f}}^{*}\bigl(K_{\widetilde{X}}+\widetilde{\Delta}\bigr)+\phi_{Y}^{*}E+F. 
\end{split}
\end{equation*}
We show that $\phi_{Y}^{*}E+F$ is $\widetilde{f}$-exceptional. 
By construction  it is clearly $(\phi_{X}\circ \widetilde{f})$-exceptional. 
On the other hand, every $\phi_{X}$-exceptional prime divisor $D$ satisfies $a\bigl(D,\widetilde{X},\widetilde{\Delta}\bigr)=-1$.
Therefore, if $\phi_{Y}^{*}E+F$ is not $\widetilde{f}$-exceptional, we can find a component $E_{0}$ of $\phi_{Y}^{*}E+F$ such that $a\bigl(E_{0},\widetilde{Y},\widetilde{\Gamma}\bigr)<a\bigl(E_{0},\widetilde{X},\widetilde{\Delta}\bigr)=-1$. 
It contradicts the definition of log smooth models.
In this way we see that $\phi_{Y}^{*}E+F$ is $\widetilde{f}$-exceptional. 
Therefore, by replacing $(X,\Delta)$ and $(Y,\Gamma)$ with $\bigl(\widetilde{X},\widetilde{\Delta}\bigr)$ and $\bigl(\widetilde{Y},\widetilde{\Gamma}\bigr)$, we can assume that $(X,\Delta)$ and $(Y,\Gamma)$ are $\mathbb{Q}$-factorial dlt pairs and $\bigl(Y,{\rm Supp}\,f_{*}^{-1}\Delta\cup{\rm Ex}(f)\bigr)$ is log smooth. 

Suppose that $(X,\Delta)$ has a weak lc model $(X',\Delta')$ over $Z$. 
Then we can easily check that $(X',\Delta')$ is also a weak lc model of $(Y,\Gamma)$ over $Z$. 

Conversely suppose that $(Y,\Gamma)$ has a weak lc model over $Z$. 
Note that $K_{Y}+\Gamma$ and $K_{X}+\Delta$ are then pseudo-effective over $Z$. 
We prove that the $(K_{X}+\Delta)$-MMP over $Z$ with scaling of an ample divisor must terminate. 
%Since termination of the relative log MMP is a local property, by restricting $Z$, $(X,\Delta)$ and $(Y,\Gamma)$, we may assume that $Z$ is affine. 
%In particular we may assume that $Z$ is quasi-projective. 
%Then $Z$ is quasi-projective. 
Pick a general ample divisor $A$ on $X$ such that $(X,\Delta+A)$ is lc and both $K_{X}+\Delta+A$ and $A+\llcorner \Delta \lrcorner$ are ample. 
Let $A'\sim_{\mathbb{R},\,Z}A+\llcorner \Delta \lrcorner$ be a general ample divisor over $Z$ and put $\Theta=\Delta-\llcorner \Delta \lrcorner+A'\sim_{\mathbb{R},\,Z}\Delta+A$. 
Then $(X,\Theta)$ is klt and  we may write $K_{Y}+\Psi=f^{*}(K_{X}+\Theta)+E'$, where $E'\geq0$ and $\Psi\geq0$ have no common components and $\Psi_{X}=\Theta$. 
Then $(Y,\Psi)$ is also klt because $\bigl(Y,{\rm Supp}\,f_{*}^{-1}\Delta\cup{\rm Ex}(f)\bigr)$ is log smooth. 
We set 
$$\Theta_{t}=t\Theta+(1-t)\Delta\sim_{\mathbb{R},\,Z}\Delta+tA \quad {\rm and}\quad \Psi_{t}=t\Psi+(1-t)\Gamma.$$
Then we see that $(X,\Theta_{t})$ and $(Y,\Psi_{t})$ are klt for any $0<t\leq1$, $\Theta_{0}=\Delta$, $\Psi_{0}=\Gamma$ and 
$K_{Y}+\Psi_{t}=f^{*}(K_{X}+\Theta_{t})+tE'+(1-t)E.$ 
Note that $K_{X}+\Theta_{t}$ and $K_{Y}+\Psi_{t}$ are big over $Z$ for any $0<t\leq1$. 

Since $(Y,\Gamma)$ has a weak lc model, by Theorem \ref{thmtermi} and Lemma \ref{lemweakmin}, the $(K_{Y}+\Gamma)$-MMP over $Z$ with scaling of an ample divisor terminates with a log minimal model $(Y',\Gamma_{Y'})$. 
Then it is also the $(K_{Y}+\Psi_{t_{0}})$-MMP for a sufficiently small $t_{0}>0$. 
Therefore the $\bigl(K_{Y'}+(\Psi_{t_{0}})_{Y'}\bigr)$-MMP over $Z$ with scaling of an ample divisor terminates with a log minimal model $\bigr(Y'',(\Psi_{t_{0}})_{Y''}\bigr)$ over $Z$ (cf.~\cite[Corollary 1.4.2]{bchm}). 
Since $t_{0}>0$ is sufficiently small, $K_{Y''}+\Gamma_{Y''}$ is also nef over $Z$. 
Thus $K_{Y''}+(\Psi_{t})_{Y''}$ is nef over $Z$ for any $0\leq t\leq t_{0}$. 

Now run the $(K_{X}+\Delta)$-MMP over $Z$ with scaling of $A$
$$(X=X^{0},\Delta=\Delta_{X^{0}})\dashrightarrow\cdots \dashrightarrow (X^{i},\Delta_{X^{i}})\dashrightarrow \cdots$$
and set $\lambda_{i}={\rm inf}\{\nu\geq0 \mid K_{X^{i}}+\Delta_{X^{i}}+\nu A_{{X^{i}}}{\rm \;is\;nef\;over\;}Z\}.$ 
If $\lambda_{i}=0$ for some $i\geq 0$, there is nothing to prove. 
So suppose that $\lambda_{i}>0$ for any $i\geq0$. 
Then ${\rm lim}_{i\to \infty}\lambda_{i}=0$ (see \cite[Theorem 4.1(ii)]{birkar-flip}) and thus  we can find $n>0$ such that $\lambda_{n}<\lambda_{n-1}\leq t_{0}$. 
Here recall that $\Delta+tA\sim_{\mathbb{R},\,Z}\Theta_{t}$. 
By construction $\bigl(X^{n},(\Theta_{\lambda_{n-1}})_{X^{n}}\bigr)$ and $\bigl(X^{n},(\Theta_{\lambda_{n}})_{X^{n}}\bigr)$ are weak lc models of $(X,\Theta_{\lambda_{n-1}})$ and $(X,\Theta_{\lambda_{n}})$ respectively. 
Therefore $\bigl(X^{n},(\Theta_{\lambda_{n-1}})_{X^{n}}\bigr)$ and $\bigl(X^{n},(\Theta_{\lambda_{n}})_{X^{n}}\bigr)$ are weak lc models of $(Y,\Psi_{\lambda_{n-1}})$ and $(Y,\Psi_{\lambda_{n}})$ respectively. 
But since $\lambda_{n}<\lambda_{n-1}\leq t_{0}$ we can easily check that $\bigl(Y'',(\Psi_{\lambda_{n-1}})_{Y''}\bigr)$ and $\bigl(Y'',(\Psi_{\lambda_{n}})_{Y''}\bigr)$ are also weak lc models of $(Y,\Psi_{\lambda_{n-1}})$ and $(Y,\Psi_{\lambda_{n}})$ respectively. 
By Remark \ref{remtwoweak}, if we take a common resolution $g:W\to X^{n}$ and $h:W\to Y''$ of the birational map $X^{n}\dashrightarrow Y''$, we have
\begin{equation*}
\begin{split}
&g^{*}\bigl(K_{X^{n}}+(\Theta_{\lambda_{n-1}})_{X^{n}}\bigr)=h^{*}\bigl(K_{Y''}+(\Psi_{\lambda_{n-1}})_{Y''}\bigr)\qquad {\rm and} \\
&g^{*}\bigl(K_{X^{n}}+(\Theta_{\lambda_{n}})_{X^{n}}\bigr)=h^{*}\bigl(K_{Y''}+(\Psi_{\lambda_{n}})_{Y''}\bigr).
\end{split}
\end{equation*}
Since $\lambda_{n}<\lambda_{n-1}$ and $\Delta=\Theta_{0}$ (resp.~$\Gamma=\Psi_{0}$) is represented by an $\mathbb{R}$-linear combination of $\Theta_{\lambda_{n-1}}$ and $\Theta_{\lambda_{n}}$ (resp.~$\Psi_{\lambda_{n-1}}$ and $\Psi_{\lambda_{n}}$), we have 
$$g^{*}\bigl(K_{X^{n}}+\Delta_{X^{n}}\bigr)=h^{*}\bigl(K_{Y''}+\Gamma_{Y''}\bigr).$$
Then $K_{X^{n}}+\Delta_{X^{n}}$ is nef over $Z$
%Since $(X,\Delta)\dashrightarrow (X^{n},\Delta_{X^{n}})$ is a finitely many steps of the $(K_{X}+\Delta)$-MMP over $Z$. $(X^{n},\Delta_{X^{n}})$ is 
and hence we have $\lambda_{n}=0$, which contradicts our assumption. 
So we are done.
\end{proof}

\section{Log minimal model program on fiber spaces}\label{sec2}
In this section we introduce two lemmas and a proposition used in the proof of the main results. 
Lemma \ref{lemsemi-ample} is known to the experts, but we write down the proof for the reader's convenience.
Lemma \ref{lemfiber} and Proposition \ref{prop2.1} play important roles in Section \ref{sec3}. 
%First we study the restriction of relative log MMP over an open subset.
%The following lemma is known to the experts, but we write down the proof for the reader's convenience. 

\begin{lem}\label{lemsemi-ample}
Let $\pi:X\to Z$ be a projective morphism from a normal variety to a variety and let $(X,\Delta)$ be a $\mathbb{Q}$-factorial lc pair. 
Assume that there exists an open subset $U\subset Z$ such that $K_{X}+\Delta$ is semi-ample over $U$. 
Pick a sufficiently small positive real number $t$ and let $\Delta'$ be an $\mathbb{R}$-divisor, which may not be effective, such that $(X,\Delta+\Delta')$ is lc.

Then for any finitely many steps of the $(K_{X}+\Delta+t\Delta')$-MMP %over $Z$
$$(X=X^{0},\Delta+t\Delta'=\Delta_{X^{0}}+t\Delta'_{X^{0}})\dashrightarrow \cdots\dashrightarrow (X^{n},\Delta_{X^{n}}+t\Delta'_{X^{n}})$$ 
over $Z$, $K_{X^{n}}+\Delta_{X^{n}}$ is semi-ample over $U$.  
\end{lem}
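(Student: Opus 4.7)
The plan is to use the contraction induced by the semi-ample divisor $K_{X}+\Delta$ over $U$ and show by induction on $i$ that each MMP step is compatible with it. By hypothesis there is a contraction $h\colon\pi^{-1}(U)\to V$ over $U$ together with an $\mathbb{R}$-divisor $H$ on $V$ ample over $U$ such that $(K_{X}+\Delta)\!\!\mid_{\pi^{-1}(U)}\sim_{\mathbb{R},\,U}h^{*}H$. The inductive claim to prove is that after $i$ steps $h$ descends to a contraction $h^{i}$ from the preimage of $U$ in $X^{i}$ onto the same $V$, with $(K_{X^{i}}+\Delta_{X^{i}})$ restricted to this preimage being $\sim_{\mathbb{R},\,U}(h^{i})^{*}H$; in particular $K_{X^{i}}+\Delta_{X^{i}}$ remains semi-ample over $U$.

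For the inductive step, let $R^{i}$ be the $(K_{X^{i}}+\Delta_{X^{i}}+t\Delta'_{X^{i}})$-negative extremal ray contracted at step $i$. If no curve in $R^{i}$ lies over $U$, the step is an isomorphism over $U$ and we set $h^{i+1}=h^{i}$ under the identification. Otherwise, take a curve $C$ in $R^{i}$ lying over $U$. By nef-ness of $K_{X^{i}}+\Delta_{X^{i}}$ over $U$, $(K_{X^{i}}+\Delta_{X^{i}})\cdot C\geq 0$, and the key point is to prove this equals zero for $t$ small enough; granting this, $C$ lies in a fiber of $h^{i}$ (using $(K_{X^{i}}+\Delta_{X^{i}})\cdot C=H\cdot h^{i}_{*}C$ with $H$ ample), so the contraction or flip descends to the identity on $V$, inducing the desired $h^{i+1}$.

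To establish the key point, I would invoke the bound on lengths of extremal rays for lc pairs due to Fujino, noting that $(X,\Delta+t\Delta')$ is lc for $t\in[0,1]$ since $\Delta+t\Delta'=(1-t)\Delta+t(\Delta+\Delta')$ is a convex combination of the two boundaries $\Delta$ and $\Delta+\Delta'$. This yields a rational curve $C$ in $R^{i}$ with $0<-(K_{X^{i}}+\Delta_{X^{i}}+t\Delta'_{X^{i}})\cdot C\leq 2\dim X$. If $(K_{X^{i}}+\Delta_{X^{i}})\cdot C>0$, the nonzero effective $1$-cycle $h^{i}_{*}C$ on $V$ has $H$-degree bounded below by a positive constant $\epsilon$ depending only on $V$ and $H$; combined with an upper bound on $|\Delta'_{X^{i}}\cdot C|$ obtained from the length bound and an auxiliary ample divisor on $X^{i}$, this forces $(K_{X^{i}}+\Delta_{X^{i}}+t\Delta'_{X^{i}})\cdot C>0$ for $t$ small, contradicting $R^{i}$-negativity.

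The main obstacle is securing these uniform bounds—the positive lower bound $\epsilon$ on $H\cdot h^{i}_{*}C$ and the upper bound on $|\Delta'_{X^{i}}\cdot C|$—across arbitrary steps of any $n$-step MMP, so that a single choice of $t$ suffices. This is a boundedness issue about $1$-cycles on $V$ and strict transforms of $\Delta'$, and I expect it to be the technical heart of the lemma.
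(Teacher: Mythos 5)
Your overall strategy coincides with the paper's: descend the contraction $\varphi\colon\pi^{-1}(U)\to V$ induced by semi-ampleness step by step, by showing that $K_{X^{i}}+\Delta_{X^{i}}$ has degree zero on the contracted extremal ray. But the place you yourself flag as the ``technical heart'' is a genuine gap, and the route you propose for it would not close. You apply the length bound to the pair $(X,\Delta+t\Delta')$, obtaining $0<-(K_{X^{i}}+\Delta_{X^{i}}+t\Delta'_{X^{i}})\cdot C\leq 2\dim X$, and then you need an upper bound on $|\Delta'_{X^{i}}\cdot C|$ to conclude. No such bound is available: $\Delta'$ is an arbitrary $\mathbb{R}$-divisor, and there is no boundedness statement controlling the degree of its strict transforms against extremal rational curves uniformly over the steps of the MMP; the length bound controls the log canonical divisor on the ray, not an individual summand of the boundary.

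The paper's proof avoids this entirely by applying the length bound to the lc pair $(X,\Delta+\Delta')$ with \emph{full} coefficient on $\Delta'$ (note the ray is also $(K_{X}+\Delta+\Delta')$-negative over $U$, since $K_{X}+\Delta$ is nef there and $t>0$). This gives a rational curve $C\subset\pi^{-1}(U)$ in the ray with $0<-\bigl((K_{X}+\Delta+\Delta')\!\!\mid_{\pi^{-1}(U)}\bigr)\cdot C\leq 2\dim X$, and then the convex-combination identity
\begin{equation*}
(1-t)(K_{X}+\Delta)=(K_{X}+\Delta+t\Delta')-t(K_{X}+\Delta+\Delta')
\end{equation*}
yields $(1-t)(K_{X}+\Delta)\cdot C<2t\dim X$ with no reference to $\Delta'\cdot C$ at all. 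Your other uniform constant, the lower bound $\epsilon$ on $A\cdot\varphi_{*}C$, is not actually an obstacle: since $A$ is general one writes $A=\sum_{j}\alpha_{j}A_{j}$ with $\alpha_{j}>0$ and $A_{j}$ ample Cartier over $U$, so $A\cdot\gamma\geq\alpha:=\sum_{j}\alpha_{j}$ for every nonzero effective integral $1$-cycle $\gamma$ on the fixed $V$; the single condition $(1-t)\alpha>2t\dim X$ on $t$ then works for every step. Finally, having $(K_{X}+\Delta)\cdot C=0$ on the ray, the paper does not merely descend $\varphi$ numerically but invokes the cone theorem to write $K_{X}+\Delta\sim_{\mathbb{R},\,Z}f^{*}B$ with $B$ semi-ample over $U$, which cleanly produces $\varphi_{i+1}$ with $(K_{X^{i+1}}+\Delta_{X^{i+1}})\!\!\mid_{\pi_{i+1}^{-1}(U)}\sim_{\mathbb{R},\,U}\varphi_{i+1}^{*}A$ and keeps the induction running.
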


\begin{proof}
We denote the induced morphism $X^{i}\to Z$ by $\pi_{i}$. 

Since $K_{X}+\Delta$ is semi-ample over $U$, there is a normal variety $V$ projective over $U$ and a contraction $\varphi:\pi^{-1}(U)\to V$ over $U$ such that $(K_{X}+\Delta)\!\!\mid_{\pi^{-1}(U)}\sim_{\mathbb{R},\,U}\varphi^{*}A$ for some general ample divisor $A$ over $U$. 
Since $A$ is general, we can write $A=\sum_{j=1}^{m}\alpha_{j}A_{j}$, where $\alpha_{j}>0$ and $A_{j}$ are ample Cartier divisors over $U$. 
We set $\alpha=\sum_{j=1}^{m}\alpha_{j}$. 
Since $t$ is sufficiently small, we may assume that $(1-t)\alpha>2t\cdot{\rm dim}\,X$. 
We put $\varphi_{0}=\varphi:\pi_{0}^{-1}(U)\to V$. 
For every $i>0$ we construct a contraction $\varphi_{i}:\pi_{i}^{-1}(U)\to V$ over $U$ such that $(K_{X^{i}}+\Delta_{X^{i}})\!\!\mid_{\pi_{i}^{-1}(U)}\sim_{\mathbb{R},\,U}\varphi_{i}^{*}A$. 
Then Lemma \ref{lemsemi-ample} follows from the case $i=n$. 

Let $f:X \to X_{\rm cont}$ be the extremal contraction associated to the first step of the $(K_{X}+\Delta+t\Delta')$-MMP over $Z$, and let $\pi^{-1}(U)\to U_{\rm cont}$ be the restriction of $f$ over $U$. 
Note that $U_{{\rm cont}}\subset X_{\rm cont}$. 
If $\pi^{-1}(U)\simeq U_{\rm cont}$, then $\pi_{1}^{-1}(U)\simeq U_{\rm cont}$ and therefore we can put $\varphi_{1}=\varphi:\pi_{1}^{-1}(U)\to V$. 
Then it is clear that $(K_{X^{1}}+\Delta_{X^{1}})\!\!\mid_{\pi_{1}^{-1}(U)}\sim_{\mathbb{R},\,U}\varphi_{1}^{*}A$. 
So we may assume $\pi^{-1}(U)\not\simeq U_{\rm cont}$. 
Since $(X,\Delta+\Delta')$ is lc over $U$ and $K_{X}+\Delta+\Delta'$ is not nef over $U$, by \cite[Theorem 4.6.2]{fujino-book}, there is a rational curve $C\subset\pi^{-1}(U)$ contracted by $f$ such that 
$0<-\bigl((K_{X}+\Delta+\Delta')\!\!\mid_{\pi^{-1}(U)}\bigr)\!\cdot C\leq 2\,{\rm dim}\,X$. 
%Therefore we have\begin{equation*}\begin{split}0>&(K_{X^{0}}+\Delta+t \Delta')\cdot C\\=&(1-t)(K_{U^{0}}+\Delta\!\!\mid_{U^{0}})\cdot C+t(K_{X^{0}}+\Delta\!\!\mid_{U^{0}}+\Delta'\!\!\mid_{U^{0}})\cdot C\\>&(1-t)\sum_{i=1}^{l}\alpha_{i}(\varphi_{0}^{*}A_{i}\,\cdot\, C)-(1-t)\alpha\end{split}\end{equation*}By construction of $\alpha$ we must have $\sum_{i=1}^{l}\alpha_{i}(\varphi_{0}^{*}A_{i}\,\cdot\, C)=0$, which implies that $K_{X^{0}}+\Delta$ is trivial over $X_{\rm cont}$. 
Then we see that $\bigl((K_{X}+\Delta)\!\!\mid_{\pi^{-1}(U)}\bigr)\cdot \,C=0$ by the standard argument of the length of extremal rays (cf.~\cite[proof of Proposition 3.2 (5)]{birkar-existII}). 
This implies that $K_{X}+\Delta$ is numerically trivial over $X_{\rm cont}$. 
By the cone theorem (cf.~\cite[Theorem 4.5.2]{fujino-book}), we have $K_{X}+\Delta\sim_{\mathbb{R},\,Z}f^{*}B$ for an $\mathbb{R}$-Cartier $\mathbb{R}$-divisor $B$ on $X_{\rm cont}$. 
Then %over $Z$ the divisor $K_{X^{1}}+\Delta_{X^{1}}$ is $\mathbb{R}$-linearly equivalent to the pullback of $B$.
$B$ is semi-ample over $U$. 
Therefore we see that $K_{X^{1}}+\Delta_{X^{1}}$ is semi-ample over $U$ and it induces a contraction $\varphi_{1}:\pi_{1}^{-1}(U)\to V$ over $U$. 

By construction $(K_{X^{1}}+\Delta_{X^{1}})\!\!\mid_{\pi_{1}^{-1}(U)}\sim_{\mathbb{R},\,U}\varphi_{1}^{*}A$ and $(X^{1},\Delta_{X^{1}}+\Delta'_{X^{1}})$ is lc over $U$. 
Now apply the above argument to $\varphi_{1}:\pi_{1}^{-1}(U)\to V$. % and $(K_{X^{1}}+\Delta_{X^{1}})\!\!\mid_{\pi_{1}^{-1}(U)}\sim_{\mathbb{R},\,U}\varphi_{1}^{*}A$.  
By repeating it,  for every $i>0$, we can construct $\varphi_{i}:\pi_{i}^{-1}(U)\to V$ over $U$ such that  $(K_{X^{i}}+\Delta_{X^{i}})\!\!\mid_{\pi_{i}^{-1}(U)}\sim_{\mathbb{R},\,U}\varphi_{i}^{*}A$. 
In this way we see that $K_{X^{n}}+\Delta_{X^{n}}$ is semi-ample over $U$.
%Then Lemma \ref{lemsemi-ample} follows from the case $i=n$. 
So we complete the proof. 
\end{proof}

%The following lemma and proposition play key roles in proof of main results.

\begin{lem}\label{lemfiber}
Let  $\varphi: X\to V$ and $g:V\to Z$ be projective surjective morphisms of normal quasi-projective varieties with connected fibers. 
Let $(X,\Delta)$ be a log canonical pair. 
We assume $\nu(X/V,\,K_{X}+\Delta)=0$. 
Then there is a diagram
 $$\xymatrix{(X,\Delta) \ar[d]_{\varphi}&&(X',\Delta')\ar[d]^{\varphi'}\\V\ar[dr]_{g}&&V'\ar[ll]\ar[dl]^{g'}\\&Z&}$$
where $(X',\Delta')$ is a $\mathbb{Q}$-factorial dlt pair and $X'$ and $V'$ are normal quasi-projective varieties 
such that 
\begin{enumerate}
\item[(i)]
$X'$ is birational to $X$, $V'\to V$ is projective and birational, and $\varphi'$ is a projective surjective morphism with connected fibers, 
%\item[(ii)]$\bigl(\widetilde{X}',\widetilde{\Delta}'\bigr)$ is a log smooth model of $(X,\Delta)$,
%\item[(iii)]the birational map $\bigl(\widetilde{X}',\widetilde{\Delta}'\bigr)\dashrightarrow (X',\Delta')$ is a finitely many steps of the $\bigl(K_{\widetilde{X}'}+\widetilde{\Delta}'\bigr)$-MMP over $V'$, and
%\item[(ii)]for any prime divisor $D$ over $X'$  if $a\bigl(D,X',\Delta')=-1$ then $a\bigl(D,X,\Delta)=-1$, and
\item[(ii)]
 $(X,\Delta)$ has a good minimal model over $Z$ if and only if $(X',\Delta')$ has a good minimal model over $Z$, and
\item[(iii)]
$K_{X'}+\Delta'\sim_{\mathbb{R},\,V'}0$. 
\end{enumerate}
Moreover, if there is an open subset $U\subset Z$ such that all lc centers of $(X,\Delta)$ intersect $(g\circ \varphi)^{-1}(U)$, then all lc centers of $(X',\Delta')$ intersect $(g'\circ \varphi')^{-1}(U)$.  
\end{lem}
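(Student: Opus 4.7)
The plan is to construct $(X',\Delta')$ and $V'$ by first reducing to a $\mathbb{Q}$-factorial dlt pair, running a relative $(K_{X}+\Delta)$-MMP over $V$, and then taking the relative Iitaka fibration of the resulting semi-ample divisor.

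First I would replace $(X,\Delta)$ with a $\mathbb{Q}$-factorial dlt blow-up $f:(Y,\Gamma)\to(X,\Delta)$. By Lemma \ref{lembirequiv}, $(Y,\Gamma)$ has a good minimal model over $Z$ if and only if $(X,\Delta)$ does, so condition (ii) transfers between the two pairs. The $f$-images of the lc centers of $(Y,\Gamma)$ are precisely the lc centers of $(X,\Delta)$, so the hypothesis that every lc center meets $(g\circ\varphi)^{-1}(U)$ is preserved, and by Lemma \ref{lemnum}(3) we have $\nu(Y/V,K_{Y}+\Gamma)=\nu(X/V,K_{X}+\Delta)=0$. Hence I may assume $(X,\Delta)$ is already $\mathbb{Q}$-factorial dlt.

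Next I would run the $(K_{X}+\Delta)$-MMP over $V$ with scaling of an ample divisor. The crucial input is that this MMP terminates with a good minimal model $(X',\Delta')$ over $V$, so that $K_{X'}+\Delta'$ is semi-ample over $V$. Let $\varphi':X'\to V'$ be the relative Iitaka fibration of $K_{X'}+\Delta'$ over $V$; then $\varphi'$ is a contraction over $V$ and $K_{X'}+\Delta'\sim_{\mathbb{R},\,V'}0$, giving (iii). Because $\nu(X/V,K_{X}+\Delta)=0$ is preserved under MMP steps (Lemma \ref{lemnum}(3)), the relative dimension of $V'\to V$ is zero, so $V'\to V$ is finite. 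Connectedness of the fibers of $\varphi:X\to V$, together with $X\dashrightarrow X'$ being birational over $V$, forces $V'\to V$ to be birational, which yields (i).

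For (ii), each step of the MMP over $V$ is a fortiori a step of the $(K_{X}+\Delta)$-MMP over $Z$, and MMP steps preserve the existence of good minimal models over $Z$ in both directions. For the moreover clause, in the dlt setting every lc center of $(X',\Delta')$ is the birational transform of an lc center of $(X,\Delta)$ under $X\dashrightarrow X'$. If $T\subset X'$ corresponds to $T^{0}\subset X$, then since the MMP is over $V$ we have $\varphi(T^{0})=\varphi'(T)$ as closed subsets of $V$ (identifying $V'$ with its image in $V$). By hypothesis $\varphi(T^{0})\cap g^{-1}(U)\neq\emptyset$, so $T$ meets $(g'\circ\varphi')^{-1}(U)$. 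The main obstacle will be termination of the $(K_{X}+\Delta)$-MMP over $V$ with a semi-ample output for a $\mathbb{Q}$-factorial dlt pair with $\nu(X/V,K_{X}+\Delta)=0$; I would extract this from abundance for lc pairs of numerical dimension zero on the generic fiber of $\varphi$, combined with Lemma \ref{lemmmp} applied with scaling of an ample divisor to propagate termination to the full MMP over $V$. Once this input is in place, the remaining verifications are essentially bookkeeping.
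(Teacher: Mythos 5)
Your overall scaffolding (dlt blow-up, produce a model with relatively trivial log canonical divisor, then read off (i)--(iii) and the lc-center statement) matches the shape of the conclusion, but the step you flag as ``the main obstacle'' is the entire content of the lemma, and the way you propose to fill it does not work. You want to run the $(K_{X}+\Delta)$-MMP over $V$ and terminate with a \emph{good} minimal model over $V$, extracting this from abundance for lc pairs of numerical dimension zero on the generic fiber together with Lemma \ref{lemmmp}. Two problems. First, what is available on the very general fibre of $\varphi$ in the $\nu=0$, $\mathbb{R}$-boundary setting is Gongyo's effectivity statement \cite[Corollary 6.1]{gongyo1}, giving only $K_{X}+\Delta\sim_{\mathbb{R},\,V}E$ with $E\geq 0$; passing from fibrewise information plus relative effectivity to semi-ampleness over $V$ is precisely the relative abundance problem that Theorem \ref{thmhaconxu} is designed to solve, and that theorem is proved \emph{using} Lemma \ref{lemfiber} and Proposition \ref{prop2.1}, so your argument is circular. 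Second, Lemma \ref{lemmmp} has as a hypothesis that $(X,\Delta+\mu H)$ admits a log minimal model over the base for every $\mu\in(0,1]$; you have no source for this hypothesis, so it cannot be used to force termination.

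What the paper actually does to manufacture $K_{X'}+\Delta'\sim_{\mathbb{R},\,V'}0$ is quite different, and it is where $V'$ comes from: after compactifying (needed even to invoke \cite[Corollary 6.1]{gongyo1}, which is a projective statement), it applies weak semistable reduction \cite{ak} to replace $\varphi$ by an \emph{equidimensional} toroidal contraction over a smooth birational modification $\overline{V}'$ of $\overline{V}$. Equidimensionality forces every vertical component of the effective divisor $E^{h}+E^{v}\sim_{\mathbb{R},\,\overline{V}'}K+\Gamma$ to map onto a divisor of the smooth base, so one can subtract a pullback $(\overline{\psi}\circ \overline{f}')^{*}P'$ and make $E^{v}$ very exceptional; a relative MMP then contracts $E^{h}$, and Birkar's very exceptional divisor lemma \cite[Lemma 3.3]{birkar-flip} kills $E^{v}$, yielding relative triviality with no abundance input. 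Your construction keeps the base unchanged (so your $V'\to V$ would be an isomorphism), and without the equidimensional model there is no mechanism to dispose of the vertical part of $E$. As a minor point, Lemma \ref{lemnum}(3) concerns pullbacks under birational morphisms, not steps of an MMP, so it does not by itself justify the invariance of $\nu(\,\cdot\,/V,\,\cdot\,)$ that you invoke.
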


\begin{proof}
We prove it with several steps. 

\begin{step4}\label{step1fiber}
By replacing $(X,\Delta)$ with its log smooth model, we can assume that $(X,\Delta)$ is log smooth. 
Since $V$ and $Z$ are both quasi-projective, there exists a projective morphism $\overline{g}:\overline{V}\to \overline{Z}$ 
such that 
\begin{itemize}
\item
$\overline{V}$ (resp.~$\overline{Z}$) is a normal projective variety and $\overline{V}$ (resp.~$\overline{Z}$) contains $V$ (resp.~$Z$) as an open subset, and 
\item
$\overline{g}\!\!\mid_{V}=g$.
\end{itemize}
Now we  apply \cite[Lemma 4.17]{fujino-fund} 
to $\varphi:X\to V$ and $V\subset \overline{V}$. 
Then we can find a projective morphism $\overline{\varphi}:\overline{X}\to \overline{V}$ such that 
\begin{itemize}
\item
$\overline{X}$ is projective and it contains $X$ as an open subset, 
\item
$\overline{\varphi}\!\!\mid_{X}=\varphi$, and
\item
if $\overline{\Delta}$ is the closure of $\Delta$ in $\overline{X}$, then $\bigl(\overline{X}, \overline{\Delta}\bigr)$ is log smooth. 
\end{itemize}
By construction $\overline{g}$ and $\overline{\varphi}$ are contractions, $\overline{g}^{-1}(Z)=V$, $\overline{\varphi}^{-1}(V)=X$ and $\bigl(\overline{X}, \overline{\Delta}\bigr)$ is lc. 
%Let $\overline{\pi}:\overline{X}\to \overline{Z}$ be the composition of $\overline{\varphi}$ and $\overline{g}$. 
%Then $\overline{\pi}$ is a contraction, $\overline{\pi}\!\!\mid_{X}=\pi$ and $\overline{\pi}^{-1}(Z)=X$.
%We note that the pair $\bigl(\overline{X}, \overline{\Delta}\bigr)$ is lc. 
\end{step4}

\begin{step4}\label{step2fiber}
By making minor changes to \cite[proof of Theorem 2.1]{ak}, we can construct a diagram
$$
\xymatrix{
\overline{X} \ar[d]_{\overline{\varphi}}&\overline{Y}\supset U_{\overline{Y}}\ar[l]_{\!\!\!\!\!\!\!\!\!\!\overline{f}}\ar@<-3.1ex>[d]_{\overline{\psi}}\ar@<2.2ex>[d]\\
\overline{V}&\overline{V}'\ar[l]\supset U_{\overline{V}'}\\
}
$$
such that 
\begin{itemize}
\item
$\overline{Y}$ and $\overline{V}'$ are smooth and projective, 
\item
the morphisms $\overline{f}:\overline{Y}\to \overline{X}$ and $\overline{V}'\to \overline{V}$ are birational,
\item
$U_{\overline{Y}}\subset \overline{Y}$ and $U_{\overline{V}'}\subset \overline{V}'$ are open subsets and 
$\overline{\psi}$ is toroidal, and 
\item
There is a simple normal crossing divisor $G$ on $\overline{Y}$ such that ${\overline{f}}^{-1}({\rm Supp}\,\Delta)\cup{\rm Ex}(\overline{f})\subset {\rm Supp}\,G\subset\overline{Y}\backslash U_{\overline{Y}}$.
\end{itemize}
Moreover, by \cite[Remark 4.5]{ak}, we can obtain a birational morphism $\overline{f}':\overline{Y}' \to \overline{Y}$ such that 
\begin{enumerate}
\item[(i)]
the induced morphism $\overline{\psi}\circ \overline{f}':\overline{Y}'\to \overline{V}'$ is a contraction and equidimensional, and
\item[(ii)]
 there is a divisor $G'$ such that ${\overline{f}'}^{-1}({\rm Supp}\,G)\cup {\rm Ex} (\overline{f}')\subset {\rm Supp}\,G'$ and $(\overline{Y}', {\rm Supp}\,G')$ is quasi-smooth (i.e., $(\overline{Y}',{\rm Supp}\,G')$ is toroidal and $\overline{Y}'$ is $\mathbb{Q}$-factorial).
\end{enumerate}
Condition (ii) implies that $\bigl(\overline{Y}',0\bigr)$ is klt and $\bigl(\overline{Y}',{\rm Supp}\,G'\bigr)$ is lc. % (see, for example, \cite[Lemma 5.2]{fujino-toric}). 
Here we regard ${\rm Supp}\,G'$ as a reduced divisor. 
We set $f_{\overline{Y}'}=\overline{f}\circ \overline{f}'$. 
Then we can write 
$$K_{\overline{Y}'}+\overline{\Gamma}'=f_{\overline{Y}'}^{*}(K_{\overline{X}}+\overline{\Delta})+\overline{F},$$
where $\overline{F}\geq0$ and $\overline{\Gamma}' \geq0$ have no common components and $\overline{\Gamma}'_{\overline{X}}=\overline{\Delta}$. 
By construction we see that $\bigl(\overline{Y}',\overline{\Gamma}'\bigr)$ is lc and the image of any lc center of $\bigl(\overline{Y}',\overline{\Gamma}'\bigr)$ on $\overline{X}$ is an lc center of $\bigl(\overline{X},\overline{\Delta}\bigr)$.
\end{step4}

\begin{step4}\label{step3fiber}
Since $\nu(\overline{X}/\overline{V},\,K_{\overline{X}}+\overline{\Delta})=0$, by \cite[Corollary 6.1]{gongyo1}, we have 
$$K_{\overline{Y}'}+\overline{\Gamma}'\sim_{\mathbb{R},\,\overline{V}'}E^{h}+E^{v},$$
where $E^{h}\geq0$ and $E^{v}\geq0$, such that any component of $E^{h}$ dominates $\overline{V}'$ and $E^{v}$ is vertical over $\overline{V}'$. 
Now recall that by condition (ii) fibers of $\overline{\psi}\circ \overline{f}'$ have the same dimension.  
Therefore the image of any component of $E^{v}$ on $\overline{V}'$ is a divisor. 
Since $\overline{V}'$ is smooth, we can consider 
$$\nu_{P}={\rm sup}\{\nu\geq0 \mid E^{v}-\nu (\overline{\psi}\circ \overline{f}')^{*}P {\rm \,\; is \,\; effective}\}$$
for any prime divisor $P$ on $\overline{V}'$. 
Then it is easy to see that $\nu_{P}>0$ with only finitely many prime divisors $P$. 
So we can define $P' =\sum\nu_{P}P$, where $P$ in the summation runs over all prime divisors on $\overline{V}'$. 
Then we see that $E^{v}- (\overline{\psi}\circ \overline{f}')^{*}P'$ is very exceptional over $\overline{V}'$ (for the definition and properties of very exceptional divisors, see
 \cite[Section 3]{birkar-flip}).
By replacing $E^{v}$ with $E^{v}- (\overline{\psi}\circ \overline{f}')^{*}P'$ we can assume that $E^{v}$ is 
very exceptional over $\overline{V}'$. 
%Then $K_{X}+\Delta \sim_{\mathbb{Q},\,W}E^{h}+E'$ and $E'$ is effective. 
%Moreover we see that $E'$ is very exceptional over $W$ (cf.~\cite[Definition 3.1]{birkar-flip}). 

We note that $\nu\bigl(\overline{Y}'/\overline{V}',\,K_{\overline{Y}'}+\overline{\Gamma}'\bigr)=0$ by construction. 
We run the $(K_{\overline{Y}'}+\overline{\Gamma}')$-MMP over $\overline{V}'$ with scaling of an ample divisor. 
Since every component of $E^{h}$ dominates $\overline{V}'$ and $E^{v}$ is very exceptional over $\overline{V}'$, by the argument as in \cite[proof of Theorem 6.2]{has-trivial}, we reach a model $\bigl(\overline{Y}'',\overline{\Gamma}''\bigr)\to \overline{V}'$, where $\overline{\Gamma}''$ is the birational transform of $\overline{\Gamma}'$ on $\overline{Y}''$, such that 
$E^{v}_{\overline{Y}''}\sim_{\mathbb{R},\,\overline{V}'}K_{\overline{Y}''}+\overline{\Gamma}''$ is the limit of movable divisors over $\overline{V}'$. 
Then $E^{v}_{\overline{Y}''}$ is also very exceptional over $\overline{V}'$ because the $(K_{\overline{Y}'}+\overline{\Gamma}')$-MMP only occurs in ${\rm Supp}\,(E^{h}+E^{v})$.  
Thus  $K_{\overline{Y}''}+\overline{\Gamma}''\sim_{\mathbb{R},\,\overline{V}'}0$ by \cite[Lemma 3.3]{birkar-flip} and it is nothing but a good minimal model over $\overline{V}'$. 
%Now can check that $(\overline{Y'},\overline{\Gamma'})$ has a good minimal model over $\overline{V'}$ if and only if $(\overline{X'},\overline{\Delta'})$ has a good minimal model over $\overline{V'}$. Moreover, if all lc centers of $(Y',\Gamma')$ intersect the inverse image of $U$, then all lc centers of $(X',\Delta')$ intersect the inverse image of $U$. 
Let $\overline{g}':\overline{V}'\to \overline{Z}$ be the induced morphism. 
We take a dlt blow-up $\bigl(\overline{X}',\overline{\Delta}'\bigr)\to \bigl(\overline{Y}'',\overline{\Gamma}''\bigr)$ and denote the composition morphism $\overline{X}'\to \overline{Y}''\to \overline{V}'$ by $\overline{\varphi}'$. 
\end{step4}

\begin{step4}\label{step4fiber}
Now we have lc pairs $\bigl(\overline{X},\overline{\Delta}\bigr)$ and $\bigr(\overline{Y}',\overline{\Gamma}'\bigr)$, a $\mathbb{Q}$-factorial dlt pair $\bigl(\overline{X}',\overline{\Delta}'\bigr)$ and varieties $\overline{Z}$, $\overline{V}$ and $\overline{V}'$. 
Moreover each pair or variety has a morphism to $\overline{Z}$. 
Therefore we can take the restrictions of those pairs and varieties over $Z\subset \overline{Z}$, and we denote the restriction of $\bigl(\overline{X},\overline{\Delta}\bigr)$ (resp.~$\bigl(\overline{Y}',\overline{\Gamma}'\bigr)$, $\bigl(\overline{X}',\overline{\Delta}'\bigr)$, $\overline{V}$ and $\overline{V}'$) by 
$(X,\Delta)$ (resp.~$(Y',\Gamma')$, $(X',\Delta')$, $V$ and $V'$). 
Then we obtain the following diagram, 
 $$
\xymatrix{
(X,\Delta) \ar[d]_{\varphi}&(Y',\Gamma')\ar[l]_{f_{Y'}}\ar@{-->}[r]\ar[dr]&(X',\Delta')\ar[d]^{\varphi'}\\
V\ar[dr]_{g}&&V'\ar[ll]\ar[dl]^{g'}\\
&Z&
}
$$
where $f_{Y'}=f_{\overline{Y}'}\!\!\mid_{Y'}$, $\varphi'=\overline{\varphi}'\!\!\mid_{X'}$, $g'=\overline{g}'\!\!\mid_{V'}$ and $(Y',\Gamma') \dashrightarrow (X',\Delta')$ is the restriction of the induced map $\bigr(\overline{Y}',\overline{\Gamma}'\bigr)\dashrightarrow \bigl(\overline{X}',\overline{\Delta}'\bigr)$. 
Then it is clear that $(X',\Delta')$ is $\mathbb{Q}$-factorial dlt, $X'$ and $V'$ are both normal and quasi-projective, and the diagram satisfies conditions (i) and (iii) of Lemma \ref{lemfiber}. 
%Note that by construction $(X',\Delta')$ is $\mathbb{Q}$-factorial dlt and $V'$ is normal quasi-projective.  
Moreover we see that $(X',\Delta')$ has a good minimal model over $Z$ if and only if $(Y',\Gamma')$ has a good minimal model over $Z$. 
By construction we also have 
$K_{Y'}+\Gamma'=f_{Y'}^{*}(K_{X}+\Delta)+F$ for some effective $f_{Y'}$-exceptional divisor $F$ (see Step \ref{step2fiber} of this proof). 
Therefore, by Lemma \ref{lembirequiv}, $(X,\Delta)$ has a good minimal model over $Z$ if and only if $(Y',\Gamma')$ has a good minimal model over $Z$, and thus condition (ii) of Lemma \ref{lemfiber} holds. 
We note that $(X',\Delta')$ is a good minimal model of $(Y',\Gamma')$ over $V'$. 

Finally we show the above diagram satisfies the last assertion of Lemma \ref{lemfiber}. 
Recall that the image of any lc center of $\bigl(\overline{Y}',\overline{\Gamma}'\bigr)$ on $\overline{X}$ is an lc center of $(\overline{X},\overline{\Delta})$, which is stated in the last part of Step \ref{step2fiber} in this proof. 
Pick any prime divisor $D$ over $X'$ such that $a(D,X',\Delta')=-1$. 
We need to show the image of $D$ on $Z$ intersects $U$. 
By construction we have $a(D,Y',\Gamma')=-1$. 
Therefore the image of $D$ on $Y'$ is an lc center of $(Y',\Gamma')$, and thus the image of $D$ on $X$ is an lc center of $(X,\Delta)$. 
By the hypothesis of Lemma \ref{lemfiber}, we see that the image of $D$ on $Z$ intersects $U$, and hence the above diagram satisfies the last assertion of Lemma \ref{lemfiber}. 
Thus we complete the proof. 
\end{step4}
\end{proof}

\begin{rem}\label{remfiber}
Notations as in Lemma \ref{lemfiber}, let $\bigl(\widetilde{X},\widetilde{\Delta}\bigr)$ (resp.~$(\widetilde{X}',\widetilde{\Delta}')$) be a good minimal model of $(X,\Delta)$ (resp.~$(X',\Delta')$) over $Z$, and let $\widetilde{X}\to \widetilde{V}$ (resp.~$\widetilde{X}'\to \widetilde{V}'$) be the contraction over $Z$ induced by $K_{\widetilde{X}}+\widetilde{\Delta}$ (resp.~$K_{\widetilde{X}'}+\widetilde{\Delta}'$). 
Let $p:W\to \widetilde{X}$ and $q:W\to \widetilde{X}'$ be a common resolution of the induced birational map $\widetilde{X}\dashrightarrow \widetilde{X}'$. 
By construction we see that $\bigl(\widetilde{X},\widetilde{\Delta}\bigr)$ and $\bigl(\widetilde{X}',\widetilde{\Delta}'\bigr)$ are two weak lc models of $(Y',\Gamma')$ over $Z$. 
Then we have $p^{*}\bigl(K_{\widetilde{X}}+\widetilde{\Delta}\bigr)=q^{*}\bigl(K_{\widetilde{X}'}+\widetilde{\Delta}'\bigr)$ by Remark \ref{remtwoweak}. 
From this we have $\widetilde{V}\simeq \widetilde{V}'$ and $a\bigl(D,\widetilde{X},\widetilde{\Delta}\bigr)=-1$ if and only if  $a\bigl(D,\widetilde{X}',\widetilde{\Delta}'\bigr)$ for any prime divisor $D$ over $\widetilde{X}$.  
\end{rem}

\begin{prop}\label{prop2.1}
%Let $(X,\Delta)$ be a log canonical pair.
%Let $\varphi: X\to V$ and $g:V\to Z$ be projective surjective morphisms of normal quasi-projective varieties with connected fibers. 
Let $\pi:X \to Z$ be a projective surjective morphism of normal quasi-projective varieties with connected fibers and let $(X,\Delta)$ be a log canonical pair. 
Let $\varphi:X \to V$ be a contraction over $Z$ to a normal quasi-projective variety $V$, which is projective over $Z$, such that 
\begin{itemize}
\item
$\nu(X/V,\,K_{X}+\Delta)\!=0$, 
\item
$\nu(X/Z,\,K_{X}+\Delta)={\rm dim}\,V-{\rm dim}\,Z$, and 
\item
all lc centers of $(X,\Delta)$ dominate $V$. 
\end{itemize}
Then $(X,\Delta)$ has a good minimal model over $Z$. 
Moreover, if $\bigl(\widetilde{X},\widetilde{\Delta}\bigr)$ is a good minimal model of $(X,\Delta)$ over $Z$ and $\widetilde{X}\to \widetilde{V}$ is the contraction over $Z$ induced by $K_{\widetilde{X}}+\widetilde{\Delta}$, then all lc centers of $\bigl(\widetilde{X},\widetilde{\Delta}\bigr)$ dominate $\widetilde{V}$. 
\end{prop}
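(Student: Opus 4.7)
The plan is to reduce the problem via Lemma \ref{lemfiber} to a fibration on which $K+\Delta$ is relatively $\mathbb{R}$-trivial, then descend via Ambro's canonical bundle formula (\cite[Corollary 3.2]{fg-bundle}) to a lower-dimensional lc pair with big log-canonical divisor over $Z$, which is handled by standard MMP techniques.

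Concretely, I apply Lemma \ref{lemfiber} to $\varphi\colon X\to V$ and $g\colon V\to Z$, obtaining a $\mathbb{Q}$-factorial dlt pair $(X',\Delta')$ with a contraction $\varphi'\colon X'\to V'$ such that $K_{X'}+\Delta'\sim_{\mathbb{R},\,V'}0$, and such that $(X,\Delta)$ has a good minimal model over $Z$ if and only if $(X',\Delta')$ does. Tracing through the last paragraph of the proof of Lemma \ref{lemfiber}, the image on $X$ of any lc center of $(X',\Delta')$ is an lc center of $(X,\Delta)$, hence dominates $V$; since $V'\to V$ is birational and surjective, every lc center of $(X',\Delta')$ dominates $V'$. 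This is precisely the setting of Ambro's canonical bundle formula for lc pairs, which produces a boundary $\mathbb{R}$-divisor $B_{V'}$ on $V'$ such that $(V',B_{V'})$ is lc and $K_{X'}+\Delta'\sim_{\mathbb{R}}\varphi'^{*}(K_{V'}+B_{V'})$. Since relative numerical dimension is preserved under pullback by a contraction,
$$
\nu(V'/Z,\,K_{V'}+B_{V'})=\nu(X'/Z,\,K_{X'}+\Delta')=\dim V-\dim Z=\dim V'-\dim Z,
$$
so $K_{V'}+B_{V'}$ is big over $Z$ by Lemma \ref{lemnum}(1).

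Next I produce a good minimal model of $(V',B_{V'})$ over $Z$. Because $K_{V'}+B_{V'}$ is big over $Z$, I would perturb $B_{V'}$ to a big klt boundary and run the $(K_{V'}+B_{V'})$-MMP over $Z$ with scaling using Lemma \ref{lemmmp}; the required log minimal models of $(V',B_{V'}+\mu H)$ for small $\mu>0$ come from BCHM applied to the klt perturbation, preserving bigness. Given the resulting good minimal model $(\widetilde{V},\widetilde{B})$ of $(V',B_{V'})$ over $Z$, I lift it to a good minimal model of $(X',\Delta')$ over $Z$ using $K_{X'}+\Delta'\sim_{\mathbb{R}}\varphi'^{*}(K_{V'}+B_{V'})$ and the fact that semi-ampleness descends along $\varphi'^{*}$; Lemma \ref{lemfiber}(ii) then transfers this to a good minimal model $(\widetilde{X},\widetilde{\Delta})$ of $(X,\Delta)$ over $Z$. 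For the final assertion, Remark \ref{remfiber} identifies the contraction $\widetilde{X}\to\widetilde{V}$ with the one coming from the lifted construction above; since every lc center of $(X',\Delta')$ dominates $V'$, and lc-center-dominance is preserved through the MMP and the pullback identification via Remark \ref{remfiber}, every lc center of $(\widetilde{X},\widetilde{\Delta})$ dominates $\widetilde{V}$. The main obstacle is carrying out the big-case existence for lc $\mathbb{R}$-boundary pairs cleanly and, more delicately, lifting the good minimal model on $V'$ to one on $X'$ in a way compatible with the canonical bundle formula while controlling the lc centers.
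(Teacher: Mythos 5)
Your overall strategy coincides with the paper's: reduce via Lemma \ref{lemfiber} to the case $K_{X'}+\Delta'\sim_{\mathbb{R},\,V'}0$, descend by the canonical bundle formula, solve a big MMP problem on the base, and lift back. However, two steps as written do not go through.

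First, \cite[Corollary 3.2]{fg-bundle} gives more than you use: precisely because all lc centers of $(X',\Delta')$ dominate $V'$, the pair it produces on the base is \emph{klt}, not merely lc. This is not a cosmetic point, because your proposed workaround for the lc case --- perturb $B_{V'}$ to a big klt boundary and invoke Lemma \ref{lemmmp} with minimal models supplied by BCHM applied to the perturbation --- does not work: Lemma \ref{lemmmp} requires log minimal models of the pairs $(V',B_{V'}+\mu H)$ \emph{themselves}, and BCHM only provides them when these pairs are klt; minimal models of a klt perturbation of a genuinely lc pair are not minimal models of that pair. Existence of good minimal models for lc $\mathbb{R}$-pairs with big log canonical divisor is essentially the class of statement this paper is establishing, so assuming it here would be circular. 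The paper simply records the klt conclusion of \cite[Corollary 3.2]{fg-bundle} and applies \cite[Corollary 1.4.2]{bchm} directly.

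Second, and more seriously, the step you defer as ``the main obstacle'' is the actual content of the proof and cannot be replaced by ``semi-ampleness descends along $\varphi'^{*}$.'' The divisor $K_{X'}+\Delta'$ is the pullback of $K_{V'}+B_{V'}$, not of its minimal model $K_{\widetilde{V}}+\widetilde{B}$, and the map $V'\dashrightarrow\widetilde{V}$ may contract divisors. The paper's Step \ref{step2sub} takes a log smooth model $(Y,\Gamma)$ of $(X,\Delta)$ admitting a morphism $\varphi''$ to the minimal model $V''$ of the base, writes $K_{Y}+\Gamma\sim_{\mathbb{R}}\varphi''^{*}(K_{V''}+\Theta_{V''})+E$ with $E\geq 0$, and then must contract $E$ by an MMP over $V''$; the key point is that $E$ is trivial over an open subset of $V''$ whose complement has codimension $\geq 2$, so once $E_{Y^{n}}$ becomes a limit of movable divisors over $V''$ it is very exceptional and hence zero by \cite[Lemma 3.3]{birkar-flip}. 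Only then is the log canonical divisor an honest pullback of a divisor that is semi-ample over $Z$, yielding a weak lc model. Without this very-exceptional-divisor argument (or a substitute) your proof is incomplete, as is the corresponding tracing of lc centers through $Y^{n}$ needed for the last assertion. A smaller omission in the same spirit: the paper first compactifies $X$, $V$, $Z$ and blows up lc centers lying over the boundary so that the hypothesis that all lc centers dominate $V$ survives compactification, since \cite[Corollary 3.2]{fg-bundle} is applied in the projective setting.
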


\begin{proof}
We denote the induced morphism $V\to Z$ by $g$. 
%By taking the Stein factorization of $\varphi$ and $g$, we can assume that $\varphi$ and $g$ are contractions. 
We prove Proposition \ref{prop2.1} with two steps.

\begin{step}\label{step1sub}
In this step we show that we can assume that $X,V$ and $Z$ are projective.

By taking a log resolution of $(X,\Delta)$ we can assume that $(X,\Delta)$ is log smooth. 
Since $V$ and $Z$ are both quasi-projective, there exists a projective morphism $\overline{g}:\overline{V}\to \overline{Z}$ 
such that 
\begin{itemize}
\item
$\overline{V}$ (resp.~$\overline{Z}$) is a normal projective variety and $\overline{V}$ (resp.~$\overline{Z}$) contains $V$ (resp.~$Z$) as an open subset, and 
\item
$\overline{g}\!\!\mid_{V}=g$.
\end{itemize}
Now we  apply \cite[Lemma 4.17]{fujino-fund} 
to $\varphi:X\to V$ and $V\subset \overline{V}$. 
Then we can find a projective morphism $\overline{\varphi}:\overline{X}\to \overline{V}$ such that 
\begin{itemize}
\item
$\overline{X}$ is projective and it contains $X$ as an open subset, 
\item
$\overline{\varphi}\!\!\mid_{X}=\varphi$, and
\item
if $\overline{\Delta}$ is the closure of $\Delta$ in $\overline{X}$, then $\bigl(\overline{X}, \overline{\Delta}\bigr)$ is log smooth. 
\end{itemize}
By construction $\overline{\varphi}$ is a contraction and $\overline{\varphi}^{-1}(V)=X$. 
Let $\overline{\pi}:\overline{X}\to \overline{Z}$ be the composition of $\overline{\varphi}$ and $\overline{g}$. 
Then $\overline{\pi}$ is a contraction, $\overline{\pi}\!\!\mid_{X}=\pi$ and $\overline{\pi}^{-1}(Z)=X$.
We note that the pair $(\overline{X}, \overline{\Delta})$ is lc, 
$\nu\bigl(\overline{X}/\overline{V},\,K_{\overline{X}}+\overline{\Delta}\bigr)=0$ and 
$\nu\bigl(\overline{X}/\overline{Z},\,K_{\overline{X}}+\overline{\Delta}\bigr)={\rm dim}\,\overline{V}-{\rm dim}\,\overline{Z}$. 

If there is an lc center of $\bigl(\overline{X}, \overline{\Delta}\bigr)$ which is contained in 
$\overline{X}\backslash X$, then pick a minimal lc center $C$ of $\bigl(\overline{X}, \overline{\Delta}\bigr)$ contained in $\overline{X}\backslash X$ and 
%Then $C_{i}\cap C_{j}=\emptyset$ for any $i\neq j$ since $(\overline{X}, \overline{B})$ is log smooth. 
%Moreover $C=\amalg_{i=1}^{l}C_{i}\subset \overline{X}\backslash X$ because otherwise $(X,B)$ has an lc center which is vertical over $Z$ and it contradicts to the hypothesis. 
take the blow-up $f_{C}:\overline{X}_{C}\to \overline{X}$ along $C$. 
Then $f_{C}^{-1}(X)=X$ and $\bigl(\overline{X}_{C},\overline{\Delta}_{C}\bigr)$ is log smooth,  where $\overline{\Delta}_{C}$ is the birational transform of $\overline{\Delta}$ on $\overline{X}_{C}$. 
%If there is an lc center of $\bigl(\overline{X}_{C},\overline{\Delta}_{C}\bigr)$ contained in $\overline{X}_{C}\backslash f_{C}^{-1}(X)$, then we replace $\bigl(\overline{X}, \overline{\Delta}\bigr)$ by $\bigl(\overline{X}_{C},\overline{\Delta}_{C}\bigr)$ and repeat the above discussion.  
%then pick a minimal lc center $C'$ of $(\overline{X}_{C},\overline{\Delta}_{C})$ contained in $\overline{X}_{C}\backslash X$ and construct a new log smooth pair as above  by taking the blow-up along $C'$. 
%By local computations of blow-ups, we can check that this recursive procedure eventually stops 
By repeating this blow-up, we obtain a log smooth pair $\bigl(\overline{X}',\overline{\Delta}'\bigr)$ such that $X\subset \overline{X}'$ is an open subset and all lc centers of $\bigl(\overline{X}',\overline{\Delta}'\bigr)$ intersect $X$.
By replacing $\bigl(\overline{X}, \overline{\Delta}\bigr)$ with $\bigl(\overline{X}', \overline{\Delta}'\bigr)$, we can assume that all lc centers of $\bigl(\overline{X}, \overline{\Delta}\bigr)$ intersect $X$. 
%with a pair $(Y,\Gamma)$. Then the constructed birational morphism $f:Y \to X$ satisfies $f^{-1}(X)\simeq X$ and $\Gamma_{X}=\overline{\Delta}$. Therefore we can identify $f^{-1}(X)$ with $X$. In particular we can write $(Y\!\!\mid_{(f\circ\overline{\pi})^{-1}(Z)},\Gamma\!\!\mid_{(f\circ\overline{\pi})^{-1}(Z)})=(X,\Delta)$. By construction we also see that all lc centers of $(Y,\Gamma)$ intersect $X$. 
%Then all lc centers of $\bigl(\overline{X}, \overline{\Delta}\bigr)$ intersects $X$. 
Since all lc centers of $(X,\Delta)$ dominate $V$, which is the hypothesis of the proposition, all lc centers of $\bigl(\overline{X}, \overline{\Delta}\bigr)$ dominate $\overline{V}$. 
%We can also check that $\nu\bigl(\overline{Y}/\overline{V},\,K_{\overline{Y}}+\Gamma\bigr)=0$ and 
%$\nu\bigl(\overline{Y}/\overline{Z},\,K_{\overline{Y}}+\Gamma\bigr)={\rm dim}\,\overline{V}-{\rm dim}\,\overline{Z}$. 
We can also check that it is sufficient to prove Propositoin \ref{prop2.1} with $\overline{\pi}:\bigl(\overline{X}, \overline{\Delta}\bigr)\to \overline{Z}$ and $\overline{\varphi}:\overline{X}\to\overline{V}$ because $\bigl(\overline{X}\!\!\mid_{\overline{\pi}^{-1}(Z)},\overline{\Delta}\!\!\mid_{\overline{\pi}^{-1}(Z)}\bigr)=(X,\Delta)$. 
%Furthermore, if $\bigl(\overline{X}, \overline{\Delta}\bigr)$ has a good minimal model over $\overline{Z}$, then its restriction over $Z$ is a good minimal model of $(X,\Delta)$ over $Z$ because we have $\bigl(\overline{X}\!\!\mid_{\overline{\pi}^{-1}(Z)},\overline{\Delta}\!\!\mid_{\overline{\pi}^{-1}(Z)}\bigr)=(X,\Delta)$. 
%In the rest of the proof we do not use log smoothness of $(X,B)$. By \cite[Lemma 2.13]{has-trivial} and \cite[Corollary 2.14]{has-trivial}, we get a dlt blow-up $f:(Y,\Gamma)\to \bigl(\overline{X},\overline{B}\bigr)$ with $\Gamma=\Gamma'+\Gamma''$, where $\Gamma''$ is reduced and vertical over $\overline{Z}$ and all lc centers of $(Y,\Gamma')$ dominate $\overline{Z}$. Then we see that $f({\rm Supp}\,\Gamma'')\subset \overline{X}\backslash X$ because otherwise $(X,B)$ has an lc center which is vertical over $Z$ and it contradicts to the hypothesis. Thus we have $\Gamma\!\!\mid_{f^{-1}(X)}=\Gamma'\!\!\mid_{f^{-1}(X)}$, and restriction of $(Y,\Gamma')$ to $f^{-1}(X)$ is a dlt blow-up of $(X,B)$. From those facts $f\circ\overline{\pi}:(Y,\Gamma')\to \overline{Z}$ satisfies all the hypotheses of the proposition. Furthermore, if $(Y,\Gamma')$ has a good minimal model over $\overline{Z}$, then restriction of it to the inverse image of $Z$ is a good minimal model of $(X,B)$ because $(\overline{X}\!\!\mid_{\overline{\pi}^{-1}(Z)},\overline{B}\!\!\mid_{\overline{\pi}^{-1}(Z)})=(X,B)$ by construction. 

In this way we can replace $\pi:(X,\Delta)\to Z$ and $\varphi:X\to V$ with $\overline{\pi}:\bigl(\overline{X}, \overline{\Delta}\bigr)\to \overline{Z}$ and $\overline{\varphi}:\overline{X}\to\overline{V}$, 
and we may assume that $X,\,V$ and $Z$ are projective. 
In the rest of proof we do not use the log smoothness of $(X,\Delta)$. 
%In the rest of the proof we do not use the assumption that $(X,B)$ is log smooth. 
\end{step}

\begin{step}\label{step2sub}
We apply Lemmal \ref{lemfiber}. 
Notations as in Lemmal \ref{lemfiber}, we can check that  all lc centers of $(X',\Delta')$ dominate $V'$ by construction (recall that $V'\to V$ is birational and $(X',\Delta')$ is a good minimal model over $V'$ of a higher model of $(X,\Delta)$).
Moreover, by Remark \ref{remfiber}, it is sufficient to show the last assertion of Proposition \ref{prop2.1} for a good minimal model $\bigl(\widetilde{X}',\widetilde{\Delta}'\bigr)$ of $(X',\Delta')$ over $Z$ and the contraction $\widetilde{X}'\to \widetilde{V}'$. 
We can also check that $\nu(X'/Z,K_{X'}+\Delta')={\rm dim}\,V'-{\rm dim}\,Z$. 
%\begin{equation*}\begin{split}\nu(X'/Z,K_{X'}+\Delta')=&\nu(X/Z, K_{X}+\Delta)\\=&{\rm dim}\,V-{\rm dim}\,Z={\rm dim}\,V'-{\rm dim}\,Z.\end{split}\end{equation*}
%by construction (recall that $(X',\Delta')$ is constructed by taking a higher model of $(X,\Delta)$ and running the log MMP over $V'$).  
Therefore we can replace $\pi:(X,\Delta)\to Z$ and $\varphi:X\to V$ with $(X',\Delta')\to Z$ and $\varphi':X'\to V'$, and we may assume that $K_{X}+\Delta\sim_{\mathbb{R},\,V}0$.

Now we carry out the argument of \cite[proof of Proposition 3.3]{birkarhu-arg} or \cite[proof of Proposition 4.3]{has-trivial}.
By \cite[Corollary 3.2]{fg-bundle} there exists a klt pair $(V,\Theta)$ such that $K_{X}+\Delta\sim_{\mathbb{R}}\varphi^{*}(K_{V}+\Theta)$. 
Then $K_{V}+\Theta$ is big over $Z$ by hypothesis. 
Therefore $(V,\Theta)$ has a good minimal model $h:(V,\Theta)\dashrightarrow (V'',\Theta_{V''})$ over $Z$  (cf.~\cite[Corollary 1.4.2]{bchm}). 
%$$\xymatrix{(X,B) \ar[d]_{\pi}&\bigl(\widetilde{X},\widetilde{B}\bigr)\ar@{-->}_{\phi}[l]\ar[d]^{\widetilde{\pi}}\\Z&\widetilde{Z}\ar[l]_{g}}$$such that \begin{enumerate}\item$\phi$ is a birational map and $g$ is a birational morphism, \item$\widetilde{\pi}:\bigl(\widetilde{X},\widetilde{B}\bigr)\to \widetilde{Z}$ is a contraction from a log canonical pair to a normal projective variety such that all lc centers of $\bigl(\widetilde{X},\widetilde{B}\bigr)$ dominates $\widetilde{Z}$.  \item$(X,B)$ has a good minimal model over $Z$ if and only if $\bigl(\widetilde{X},\widetilde{B}\bigr)$ has a good minimal model over $Z$, and \item$K_{\widetilde{X}}+\widetilde{B}\sim_{\mathbb{R},\,\widetilde{Z}}0$. \end{enumerate}By (3) we may prove the existence of good minimal model of $\bigl(\widetilde{X},\widetilde{B}\bigr)$ over $Z$. By (2) and (4) and \cite[Corollary 3.2]{fg-bundle}, there exists a klt pair $\bigl(\widetilde{Z},\Theta\bigr)$ such that $K_{\widetilde{X}}+\widetilde{B}\sim_{\mathbb{R}}\widetilde{\pi}^{*}\bigl(K_{\widetilde{Z}}+\Theta\bigr)$. Moreover $K_{\widetilde{Z}}+\Theta$ is big over $Z$, which follows from (1), and thus $\bigl(\widetilde{Z},\Theta\bigr)$ has a good minimal model $h:\bigl(\widetilde{Z},\Theta\bigr)\dashrightarrow \bigl(\widetilde{Z}',\Theta'\bigr)$ over $Z$ by \cite[Corollary 1.4.2]{bchm}. 
Then $h$ is a birational contraction since $(V,\Theta)$ is klt. 
We take a log smooth model $(Y,\Gamma)$ of $(X,\Delta)$ such that the induced map $\varphi'':Y\dashrightarrow V''$ is a morphism and it factors through a common resolution of $h$. 
%We replace $(X,\Delta)$ by the log smooth model. 
Now we run the $(K_{Y}+\Gamma)$-MMP over $V''$ with scaling of an ample divisor 
$(Y,\Gamma)\dashrightarrow \cdots\dashrightarrow(Y^{i},\Gamma_{Y^{i}})\dashrightarrow \cdots$ and get the following diagram. 
$$
\xymatrix{
(Y,\Gamma) 
\ar[d]%_{\varphi}
\ar[dr]^{\varphi''} 
\ar@{-->}[r]&\cdots \ar@{-->}[r]& (Y^{i},\Gamma_{Y^{i}})\ar[dl]\ar@{-->}[r]&\cdots\\
V\ar@{-->}^{\!\!\!\!\!h}[r]
%\ar[d]_{g}
&V''
%\ar[dl]\\Z
}
$$
%where $(Y,\Gamma)\dashrightarrow \cdots\dashrightarrow(Y^{i},\Gamma_{Y^{i}})\dashrightarrow\cdots$ is the $(K_{Y}+\Gamma)$-MMP over $V'$ with scaling. 
We prove this log MMP terminates. 
Let $V_{0}\subset V$ be the largest open subset such that $h\!\!\mid_{V_{0}}$ is an isomorphism. 
Then $h(V_{0})\subset V''$ is open and ${\rm codim}_{V''}(V''\backslash h(V_{0}))\geq2$. 
By construction the  $(K_{Y}+\Gamma)$-MMP over $V''$ terminates over $h(V_{0})$ with a good minimal model which has relatively trivial log canonical divisor. 
On the other hand, we can write $K_{Y}+\Gamma\sim_{\mathbb{R}}\varphi''^{*}(K_{V''}+\Theta_{V''})+E$ for an effective divisor $E$. 
After finitely many steps we reach a model $\varphi_{n}:(Y^{n},\Gamma_{Y^{n}}\bigr)\to V''$ such that $K_{Y^{n}}+\Gamma_{Y^{n}}\sim_{\mathbb{R}}\varphi_{n}^{*}(K_{V''}+\Theta_{V''})+E_{Y^{n}}$ is the limit of movable divisors over $V''$ and it is trivial over $h(V_{0})$. 
Then we see that $E_{Y^{n}}$ is very exceptional over $V''$ since ${\rm codim}_{V''}(V''\backslash h(V_{0}))\geq2$, and therefore $E_{Y^{n}}=0$.
Then the model $\varphi_{n}:(Y^{n},\Gamma_{Y^{n}}\bigr)\to V''$ satisfies $K_{Y^{n}}+\Gamma_{Y^{n}}\sim_{\mathbb{R}}\varphi_{n}^{*}(K_{V''}+\Theta_{V''})$. 
Because $K_{V''}+\Theta_{V''}$ is semi-ample over $Z$,  we see that  $K_{Y^{n}}+\Gamma_{Y^{n}}$ is semi-ample over $Z$. 
Therefore $(Y,\Gamma)$ has a good minimal model over $Z$ and then $(X,\Delta)$ has a good minimal model over $Z$. 
%So we complete the proof. 

Let $\bigl(\widetilde{X},\widetilde{\Delta}\bigr)$ be a good minimal model of $(X,\Delta)$ over $Z$ and $\widetilde{X}\to \widetilde{V}$ is the contraction over $Z$ induced by $K_{\widetilde{X}}+\widetilde{\Delta}$. 
Notations as above, we can then construct the morphism $Y^{n}\to \widetilde{V}$ induced by $K_{Y^{n}}+\Gamma_{Y^{n}}$, and moreover we can  construct a birational morphism $V''\to \widetilde{V}$ over $Z$ induced by $K_{V''}+\Theta_{V''}$. 
We also see that $a\bigl(D,\widetilde{X},\widetilde{\Delta}\bigr)=-1$ implies $a(D,Y^{n},\Gamma_{Y^{n}})=-1$ for any prime divisor $D$ over  $\widetilde{X}$. 
Then we have $a(D,Y,\Gamma)=-1$ and thus $D$ dominates $V$, i.e., the image of $D$ on $V$ is $V$ itself. 
%Then we can check that $D$ dominates $V''$, i.e., the image of $D$ on $V''$ is $V''$ itself. 
Since $V\dashrightarrow V''\to \widetilde{V}$ is birational, $D$ dominates $\widetilde{V}$. 
Therefore all lc centers of $\bigl(\widetilde{X},\widetilde{\Delta}\bigr)$ dominate $\widetilde{V}$ and thus we complete the proof. 
\end{step}
\end{proof}

\section{Proof of main results and other results}\label{sec3}
%Before the proof of Theorem \ref{thm1.3} we prove the following proposition. 
In this section we prove Theorem \ref{thm1.3}, Theorem \ref{thmhaconxu}, Corollary \ref{corlcc}, Theorem \ref{corflip}, Theorem \ref{corhas} and Corollary \ref{cor1.4}. 

\subsection{Proof of Theorem \ref{thmhaconxu}}\label{subsec4.2}
First we prove Theorem \ref{thmhaconxu}. 
%The idea of the proof is similar to that of \cite[Theorem 1.1]{haconxu-lcc} but we do not use \cite[Theorem 1.1]{haconxu-lcc} in the proof. We also note that in the proof we do not use \cite[Theorem 1.1]{birkar-flip}.  \section{Proof of Theorem \ref{thmhaconxu}}

\begin{proof}[Proof of Theorem \ref{thmhaconxu}]
We prove it by induction on the dimension of $X$. 
But we will not use the induction hypothesis until Step \ref{step5.5haconxu}. 

By taking the Stein factorization of $\pi$, we may assume that $\pi$ is a contraction. 
Moreover, taking a dlt blow-up we may assume that $(X,\Delta)$ is $\mathbb{Q}$-factorial dlt.
We put $U^{X}=\pi^{-1}(U)$ and $\Delta_{U^{X}}=\Delta\!\!\mid_{U^{X}}$.  
%By Theorem \ref{thmtermi} for any $\mathbb{Q}$-factorial dlt pair the existence of log minimal model implies the termination of the log MMP. 
%Furthermore, in relative setting, termination of the log MMP and semi-ampleness of $\mathbb{R}$-Cartier $\mathbb{R}$-divisors are local properties. 
%Therefore, by restricting $(X,\Delta)\to Z$ over an affine open subset of $Z$, we can assume that $Z$ is affine. 
%In particular, any variety which is projective over $Z$ is quasi-projective.  

\begin{step3}\label{step0.5haconxu}
In this step we show that we may assume that $X$ and $Z$ are projective. 

We apply the arguments in Step \ref{step1sub} in the proof of Proposition \ref{prop2.1}. 
More precisely, first we replace $(X,\Delta)$ with its log smooth model. 
Next we take compactifications $X\subset \overline{X}$ and $Z\subset \overline{Z}$ so that there exists a contraction $\overline{\pi}:\overline{X}\to \overline{Z}$ such that $\overline{X}$ (resp.~$\overline{Z}$) contains $X$ (resp.~$Z$) as an open subset and $\overline{\pi}\!\!\mid_{X}=\pi$. 
Note that $\overline{\pi}^{-1}(Z)=X$ by construction.
Let $\overline{\Delta}$ be the closure of $\Delta$ in $\overline{X}$. 
Finally we repeat a blow-up of a minimal lc center contained in $\overline{X}\backslash X$. 
Thus we can construct a morphism, which we denote $\overline{\pi}:\bigl(\overline{X},\overline{\Delta}\bigr)\to \overline{Z}$ by abuse of notation, such that 
\begin{itemize}
\item
$\overline{X}$ (resp.~$\overline{Z}$) is a normal projective variety and $\overline{X}$ (resp.~$\overline{Z}$) contains $X$ (resp.~$Z$) as an open subset, 
\item
$\overline{\pi}\!\!\mid_{X}=\pi$, and
\item
$\bigl(\overline{X},\overline{\Delta}\bigr)$ is a log smooth dlt pair such that all lc centers of $\bigl(\overline{X},\overline{\Delta}\bigr)$ intersect $X$. 
\end{itemize}
Then $U\subset Z\subset \overline{Z}$ is an open subset, and by construction it is easy to check that $\bigl(\overline{X}\!\!\mid_{\overline{\pi}^{-1}(U)},\overline{\Delta}\!\!\mid_{\overline{\pi}^{-1}(U)}\bigr)=\bigl(U^{X},\Delta_{U^{X}}\bigr)$. 
Therefore, with the conditions stated above, we can check that $\overline{\pi}:(\overline{X},\overline{\Delta})\to \overline{Z}$ satisfies the hypothesis of Theorem \ref{thmhaconxu}. 
Furthermore, if $\bigl(\overline{X}, \overline{\Delta}\bigr)$ has a good minimal model over $\overline{Z}$, its restriction over $Z$ is a good minimal model of $(X,\Delta)$ over $Z$ because $\bigl(\overline{X}\!\!\mid_{\overline{\pi}^{-1}(Z)},\overline{\Delta}\!\!\mid_{\overline{\pi}^{-1}(Z)}\bigr)=(X,\Delta)$. 

In this way we can replace $\pi:(X,\Delta)\to Z$ with $\overline{\pi}:\bigl(\overline{X}, \overline{\Delta}\bigr)\to \overline{Z}$ 
and we may assume that $X$ and $Z$ are projective. 
In the rest of proof we do not use the log smoothness of $(X,\Delta)$. 
\end{step3}

\begin{step3}\label{step1haconxu}
Since $(U^{X},\Delta_{U^{X}})$ has a good minimal model over $U$, we have $K_{U^{X}}+\Delta_{U^{X}}\sim_{\mathbb{R},\,U}E_{0}$ for an effective divisor $E_{0}$ on $U^{X}$. 
By taking the closure of $E_{0}$ on $X$ and adding the pullback of an appropriate effective Cartier divisor on $Z$ if necessary, we can find $E_{1}\geq 0$ on $X$ such that $K_{X}+\Delta\sim_{\mathbb{R},\,Z}E_{1}$. 
%Then over $Z$ the divisor $K_{X}+\Delta$ is $\mathbb{R}$-linearly equivalent to an effective divisor. 
Then we have $E_{1}\!\!\mid_{U^{X}}\sim_{\mathbb{R},\,U}\!K_{U^{X}}+\Delta_{U^{X}}$ by construction. 
We take the relative Iitaka fibration $X\dashrightarrow V$ of $E_{1}$ over $Z$ and  let $\bigl(\widetilde{X},\widetilde{\Delta}\bigr)$ be a log smooth model of $(X,\Delta)$ such that the induced map $\varphi:\widetilde{X}\dashrightarrow V$ is a morphism. 
%Then we can easily check that $\nu\bigl(\widetilde{X}/Z, K_{\widetilde{X}}+\widetilde{\Delta}\bigr)={\rm dim}\,V-{\rm dim}\,Z$. 
Since we have $K_{X}+\Delta\sim_{\mathbb{R},\,Z}E_{1}$ and $E_{1}\!\!\mid_{U^{X}}\sim_{\mathbb{R},\,U}\!K_{U^{X}}+\Delta_{U^{X}}$, by construction of $\bigl(\widetilde{X},\widetilde{\Delta}\bigr)$ and the relative Iitaka fibration, we can check that %$\nu(X/Z, K_{X}+\Delta)={\rm dim}\,V-{\rm dim}\,Z$ and $\nu(X/V, K_{X}+\Delta)=0$. 
 $\nu\bigl(\widetilde{X}/Z, K_{\widetilde{X}}+\widetilde{\Delta}\bigr)={\rm dim}\,V-{\rm dim}\,Z$ and $\nu\bigl(\widetilde{X}/V, K_{\widetilde{X}}+\widetilde{\Delta}\bigr)=0$. 
Note that we can also check the second equality with \cite[Theorem 6.1]{leh} (see also \cite{leh2} and \cite{eckl}) and the definition of abundant divisors (cf.~\ref{numeri}). % and the definition of log smooth models, we also see that $\nu\bigl(\widetilde{X}/V, K_{\widetilde{X}}+\widetilde{\Delta}\bigr)=0$.  
Since $\bigl(\widetilde{X},\widetilde{\Delta}\bigr)$ is a log smooth model of $(X,\Delta)$, to prove Theorem \ref{thmhaconxu}, it is enough to show the existence of good minimal models of $\bigl(\widetilde{X},\widetilde{\Delta}\bigr)$ over $Z$.
Thus we can replace $\pi:(X,\Delta)\to Z$ with $\bigl(\widetilde{X},\widetilde{\Delta}\bigr)\to Z$ and assume that there is a contraction $\varphi:X\to V$ over $Z$ to a normal projective variety $V$ such that 
\begin{itemize}
\item
$\nu(X/Z,K_{X}+\Delta)={\rm dim}\,V-{\rm dim}\,Z$, and 
\item
$\nu(X/V,K_{X}+\Delta)=0$. 
\end{itemize}
%\end{step3} 
%\begin{step3}\label{step2haconxu}
%In this step we show that we may assume that $K_{X}+\Delta\sim_{\mathbb{R},\,V}0$ and $\Delta=\Delta'+\Delta''$, where $\Delta'\geq0$ and $\Delta''$ is reduced, such that \begin{itemize}\item$\Delta''$ is vertical over $V$, and \itemall lc centers of $(X,\Delta-\Delta'')$ dominate $V$.  \end{itemize}
Now we can apply Lemma \ref{lemfiber}. 
Notations as in Lemma \ref{lemfiber}, we can check that 
$\nu(X/Z,K_{X}+\Delta)
%=\nu(Y'/Z,K_{Y'}+\Gamma')
=\nu(X'/Z, K_{X'}+\Delta')$
and we may replace $\pi:(X,\Delta)\to Z$ and $\varphi:X \to V$ with $(X',\Delta')\to Z$ and $X' \to V'$ (see Step \ref{step4fiber} in the proof of Lemma \ref{lemfiber}).
%Moreover, by simple calculations of relative numerical dimensions, we see that we can replace $V$ by $V'$. 
%Finally, by condition (ii) and the last assertion of Lemma \ref{lemfiber}, we can check that $(X',\Delta')\to Z$ keeps all the conditions of Theorem \ref{thmhaconxu}, that is, $(X',\Delta')$ has a good minimal model over $U\subset Z$ and all lc centers of $(X',\Delta')$ intersect the inverse image of $U$. 
%In this way we can replace $\pi:(X,\Delta)\to Z$ and $\varphi:X\to V$ by $(X',\Delta')\to Z$ and $\varphi':X'\to V'$. 
In this way we may assume that $K_{X}+\Delta\sim_{\mathbb{R},\,V}0$. 
\end{step3}

\begin{step3}\label{step3haconxu}
In this step we construct a dlt blow-up $(Y,\Gamma)\to (X,\Delta)$ which will be used in the rest of the proof. 

We take a dlt blow-up of $(X,\Delta)$ as in \cite[Corollary 2.14]{has-trivial} and replace $(X,\Delta)$ with the dlt model. 
Then we may assume that $\Delta=\Delta'+\Delta''$, where $\Delta'\geq0$ and $\Delta''$ is reduced, such that $\Delta''$ is vertical over $V$ and all lc centers of $(X,\Delta')$ dominate $V$.  
Pick a divisor $\Xi$ on $V$ such that $K_{X}+\Delta\sim_{\mathbb{R}}\varphi^{*}\Xi$. 
Then $\Xi$ is big over $Z$ from our assumption. 
Since $\Delta''$ is vertical over $V$, there is an effective divisor $\Xi'\sim_{\mathbb{R},\,Z}\varphi^{*}\Xi$ such that ${\rm Supp}\,\Xi'\supset {\rm Supp}\,\Delta''$ and $\Xi'$ is vertical over $V$. 
Then $K_{X}+\Delta\sim_{\mathbb{R},\,Z}\Xi'$.

Take a log smooth model $(Y_{0},\Gamma_{0}\bigr)$ of $(X,\Delta)$ such that $f_{0}:Y_{0}\to X$ is a log resolution of $\bigl(X,{\rm Supp}\,\Delta\cup{\rm Supp}\,\Xi'\bigr)$. 
Let $\Gamma''_{0}$ be the reduced divisor which is the sum of all components of $\llcorner \Gamma_{0} \lrcorner$ contained in ${\rm Supp}\,(f_{0}^{*}\Delta'')$, and set $\Gamma'_{0}=\Gamma_{0}-\Gamma''_{0}$. 
Then $\Gamma_{0}=\Gamma'_{0}+\Gamma''_{0}$ and we can check that $\Gamma''_{0}$ is vertical over $V$ and all lc centers of $(Y_{0},\Gamma'_{0})$ dominate $V$. 
We decompose $f_{0}^{*}\Xi'=G_{0}+D_{0}$ where $G_{0}\geq0$ and $D_{0}\geq0$ have no common components and ${\rm Supp}\,G_{0}={\rm Supp}\,\Gamma''_{0}$. 
By construction $(Y_{0},{\rm Supp}\,\Gamma_{0}\cup{\rm Supp}\,D_{0})$ is log smooth and $\llcorner \Gamma_{0} \lrcorner$ and $D_{0}$ have no common components because any component of $D_{0}$ is not a component of $\Gamma''_{0}$ and vertical over $V$. 
Now we run the $(K_{Y_{0}}+\Gamma_{0})$-MMP over $X$ with scaling and get a dlt blow-up $f:(Y,\Gamma)\to (X,\Delta)$, where $\Gamma$ is the birational transform of $\Gamma_{0}$ on $Y$. 
We decompose $\Gamma=\Gamma'+\Gamma''$, where $\Gamma'$ and $\Gamma''$ are the birational transform of $\Gamma'_{0}$ and $\Gamma''_{0}$ on $Y$ respectively. 
Then $\Gamma''$ is vertical over $V$ and all lc centers of $(Y,\Gamma')$ dominate $V$. 
We also decompose $f^{*}\Xi'=G+D$ where $G$ and $D$ are the birational transform of $G_{0}$ and $D_{0}$ on $Y$ respectively. 
By construction we have $K_{Y}+\Gamma\sim_{\mathbb{R},\,Z}G+D$ and   ${\rm Supp}\,G={\rm Supp}\,\Gamma''$. 
Now recall that  $(Y_{0},{\rm Supp}\,\Gamma_{0}\cup{\rm Supp}\,D_{0})$ is log smooth and $\llcorner \Gamma_{0} \lrcorner$ and $D_{0}$ have no common components. 
Since $Y_{0}\dashrightarrow Y$ is a finitely many steps of the $(K_{Y_{0}}+\Gamma_{0})$-MMP,  
%By the same argument as in Step \ref{step3haconxu} in the proof of Theorem \ref{thmhaconxu}, 
we see that $(Y,\Gamma+\epsilon D)$ is dlt for any sufficiently small $\epsilon>0$ and $a(P,Y,\Gamma+\epsilon D)=-1$ if and only if $a(P,Y,\Gamma)=-1$ for any prime divisor $P$ over $Y$. 

Thus we get a dlt blow-up $f:(Y, \Gamma=\Gamma'+\Gamma'')\to (X,\Delta)$ such that
\begin{enumerate}
\item[(I)]
$\Gamma''$ is reduced and vertical over $V$ and $\Gamma'\geq0$ such that all lc centers of $(Y,\Gamma')$ dominate $V$, and
\item[(II)]
$K_{Y}+\Gamma\sim_{\mathbb{R},\,Z}G+D$ where $G\geq0$ and $D\geq0$ have no common components such that 
\begin{enumerate}
\item[(II-a)]
${\rm Supp}\,G={\rm Supp}\;\Gamma''$, and 
\item[(II-b)]
for any sufficiently small $\epsilon>0$, $(Y,\Gamma+\epsilon D)$ is dlt and $a(P,Y,\Gamma+\epsilon D)=-1$ if and only if $a(P,Y,\Gamma)=-1$ for any prime divisor $P$ over $Y$. 
\end{enumerate}
\end{enumerate}
%In the rest of the proof we devote to show the existence of good minimal model of $(Y',\Gamma_{Y'})$ over $Z$. 
%It is easy to check that $\pi\circ f: (Y,\Gamma)\to Z$ satisfies all the conditions of Theorem \ref{thmhaconxu}. 
Moreover, by construction, we can easily check that $K_{Y}+\Gamma \sim_{\mathbb{R},\,V}0$ and $\nu(Y/Z,K_{Y}+\Gamma)={\rm dim}\,V-{\rm dim}\,Z$. 
\end{step3}

%$$\xymatrix{(X,\Delta=\Delta'+\Delta'') \ar[dd]_{\pi}\ar[dr]^{\varphi} &\\&V\ar[dl]\\Z&}$$
\begin{step3}\label{step4haconxu}
Now we obtain $\pi\circ f:(Y,\Gamma)\to Z$ such that
\begin{itemize}
\item
$(Y,\Gamma)$ is $\mathbb{Q}$-factorial dlt,  
\item
there is a contraction $\varphi\circ f:Y\to V$ over $Z$ to  a normal projective variety $V$ such that
\begin{enumerate}
\item[$\bullet$]
$\nu(Y/Z,K_{Y}+\Gamma)={\rm dim}\,V-{\rm dim}\,Z$, and
\item[$\bullet$]
$K_{Y}+\Gamma \sim_{\mathbb{R},\,V}0$, 
\end{enumerate}
\item
we can write $\Gamma=\Gamma'+\Gamma''$ and  $K_{Y}+\Gamma\sim_{\mathbb{R},\,Z}G+D$, where $\Gamma'$, $\Gamma''$, $G$ and $D$ satisfy conditions (I), (II), (II-a) and (II-b) in Step \ref{step3haconxu} in this proof. 
\end{itemize}
Pick $t>0$ sufficiently small. 
By conditions (II-a) and (I), all lc centers of $(Y,\Gamma-tG)$ dominate $V$. 
Moreover we have
\begin{equation*}
\begin{split}
\nu(Y/Z, K_{Y}+\Gamma-tG)
\leq\nu(Y/Z, K_{Y}+\Gamma)=&\,\nu(Y/Z, (1-t)(K_{Y}+\Gamma))\\
=\nu(Y/Z, K_{Y}+\Gamma-tG-tD)
\leq&\,\nu(Y/Z, K_{Y}+\Gamma-tG).
\end{split}
\end{equation*}
Therefore
%\begin{equation*}\begin{split}
$$\nu(Y/Z, K_{Y}+\Gamma-tG)=\nu(Y/Z, K_{Y}+\Gamma)={\rm dim}\,V-{\rm dim}\,Z.$$
%\end{split}\end{equation*}
We also have $\nu(Y/V,K_{Y}+\Gamma-tG)=0$ because $G$ is vertical over $V$. 
Therefore, by Proposition \ref{prop2.1}, $(Y,\Gamma-tG)$ has a good minimal model over $Z$. 
Let $(Y',\Gamma_{Y'}-tG_{Y'}\bigr)$ be a good minimal model of $(Y,\Gamma-tG)$ and let $Y'\to V'$ be the contraction over $Z$ induced by $K_{Y'}+\Gamma_{Y'}-tG_{Y'}$. 
Then all lc centers of $(Y',\Gamma_{Y'}-tG_{Y'})$ dominate $V'$ (see the last assertion of Proposition \ref{prop2.1}). 
This property will be used in Step \ref{step5haconxu}. 
\end{step3}

\begin{step3}\label{step4.5haconxu}
From now on we prove that $(Y,\Gamma)$ has a good minimal model over $Z$. 
In the rest of the proof we do not use $\varphi\circ f:Y\to V$, so we forget this morphism. 
%The basic strategy is the same as Step \ref{step1.5main}--\ref{step4main} in the proof of Theorem \ref{thm1.3}. 
%We only outline the proof. 

Run the $(K_{Y}+\Gamma)$-MMP over $Z$ with scaling of an ample divisor $M$ 
$$(Y,\Gamma)\dashrightarrow (Y^{1},\Gamma_{Y^{1}})\dashrightarrow \cdots \dashrightarrow (Y^{i},\Gamma_{Y^{i}})\dashrightarrow \cdots.$$ 
Then $K_{Y^{i}}+\Gamma_{Y^{i}}$ is semi-ample over $U$ for any $i\gg0$. 
Fix $i\gg0$ such that $K_{Y^{i}}+\Gamma_{Y^{i}}$ is semi-ample over $U$ and $K_{Y^{i}}+\Gamma_{Y^{i}}+\delta M_{Y^{i}}$ is movable over $Z$ for any sufficiently small $\delta>0$. 
Then for any sufficiently small $t'>0$, $(Y^{i},\Gamma_{Y^{i}}+t'D_{Y^{i}})$ is dlt and $(Y^{i},\Gamma_{{Y}^{i}}-t'G_{Y^{i}})$ has a good minimal model over $Z$. 
By replacing $(Y,\Gamma)$ with $(Y^{i},\Gamma_{Y^{i}})$ for some $i\gg0$, we may assume that $K_{Y}+\Gamma$ is semi-ample over $U$ and there is a big divisor $M$ such that $K_{Y}+\Gamma+\delta M$ is movable for any sufficiently small $\delta>0$. 

In Step \ref{step5.5haconxu}  we use the same notation as above  to mean a sequence of steps of the log MMP, but the log MMP in Step \ref{step5.5haconxu} and the above log MMP are clearly different.  
\end{step3}

\begin{step3}\label{step5haconxu}
Pick an infinite sequence  $\{t_{n}\}_{n\geq1}$ of sufficiently small positive real numbers satisfying that 
%$t_{n}<1$ for all $n$ and 
${\rm lim}_{n\to \infty}t_{n}=0$. 
Then, for every $n$, we can run the $(K_{Y}+\Gamma-t_{n}G)$-MMP over $Z$ with scaling and obtain a good minimal model 
$\psi_{n}:(Y,\Gamma-t_{n}G)\dashrightarrow(Y_{n},\Gamma_{Y_{n}}-t_{n}G_{Y_{n}})$. 
Now take $a_{n}>t_{n}$ sufficiently close to $t_{n}$. 
Then $\{a_{n}\}_{n\geq1}$ is an infinite sequence of sufficiently small positive real numbers such that
% $a_{n}<1$, 
${\rm lim}_{n\to \infty}a_{n}=0$ and $\psi_{n}$ is a finitely many steps of the $(K_{Y}+\Gamma-a_{n}G)$-MMP over $Z$. 
Therefore we see that $(Y_{n},\Gamma_{Y_{n}}-a_{n}G_{Y_{n}})$ has a good minimal model over $Z$, and hence we can run the $(K_{Y_{n}}+\Gamma_{Y_{n}}-a_{n}G_{Y_{n}})$-MMP over $Z$ with scaling and get a good minimal model 
$\phi_{n}:(Y_{n},\Gamma_{Y_{n}}-a_{n}G_{Y_{n}})\dashrightarrow(Y'_{n},\Gamma_{Y'_{n}}-a_{n}G_{Y'_{n}})$ over $Z$.  
Then for any $n$ the birational map 
$$\phi_{n}\circ\psi_{n}:(Y,\Gamma-a_{n}G)\dashrightarrow(Y'_{n},\Gamma_{Y'_{n}}-a_{n}G_{Y'_{n}})$$
is a finitely many step of the $(K_{Y}+\Gamma-a_{n}G)$-MMP over $Z$. 
Since $a_{n}$ is sufficiently close to $t_{n}$, for any $u\in[t_{n},a_{n}]$, we can assume that $K_{Y'_{n}}+\Gamma_{Y'_{n}}-uG_{Y'_{n}}$ is semi-ample over $Z$. 

We check that $(Y'_{n},\Gamma_{Y'_{n}})$ satisfies the hypothesis of Theorem \ref{thmhaconxu} for any $n$. 
Note again that $a_{n}$ is sufficiently small for any $n$, and 
recall that 
%Here we recall that $a_{n}$ is sufficiently small for any $n$ and 
$K_{Y}+\Gamma\sim_{\mathbb{R},\,Z}G+D,$
which is condition (II) in Step \ref{step3haconxu} of this proof. 
We put $\widetilde{a}_{n}=a_{n}/(1-a_{n})$. 
Then we have
$$K_{Y'_{n}}+\Gamma_{Y'_{n}}-a_{n}G_{Y'_{n}}\sim_{\mathbb{R},\,Z} (1-a_{n})(K_{Y'_{n}}+\Gamma_{Y'_{n}}+\widetilde{a}_{n} D_{Y'_{n}}).$$
Therefore, by construction of $Y\dashrightarrow Y'_{n}$, we see that $(Y'_{n},\Gamma_{Y'_{n}}+\widetilde{a}_{n}D_{Y'_{n}})$ is dlt. 
In particular $(Y'_{n},\Gamma_{Y'_{n}})$ is dlt. % for any $n$. 
%Since $Y\dashrightarrow Y'_{n}$ is a  finitely many steps of the $(K_{Y}+\Gamma-a_{n}G)$-MMP, we can easily check that the divisor  $\Gamma_{Y'_{n}}=\Gamma'_{Y'_{n}}+\Gamma''_{Y'_{n}}$ satisfies conditions (I), (II) and (II-a) stated in Step \ref{step1main} in this proof. 
Pick a prime divisor $P$ over $Y'_{n}$ such that  $a(P,Y'_{n},\Gamma_{Y'_{n}})=-1$. 
Then $a(P,Y'_{n},\Gamma_{Y'_{n}}+\widetilde{a}_{n}D_{Y'_{n}})=-1$. 
Since $Y\dashrightarrow Y'_{n}$ is a finitely many steps of the $(K_{Y}+\Gamma-a_{n}G)$-MMP, we have  $a(P,Y,\Gamma+\widetilde{a}_{n}D)=-1$. 
Therefore $a(P,Y,\Gamma)=-1$ by (II-b) in Step \ref{step3haconxu} in this proof. 
Thus we see that the image of $P$ on $Z$ intersects $U$, which implies that all lc centers of $(Y'_{n},\Gamma_{Y'_{n}})$ intersect the inverse image of $U$.  
Moreover, by Lemma \ref{lemsemi-ample}, $K_{Y'_{n}}+\Gamma_{Y'_{n}}$ is semi-ample over $U$. 
Therefore $(Y'_{n},\Gamma_{Y'_{n}})\to Z$ satisfies all the conditions of Theorem \ref{thmhaconxu}. 

Next consider the contraction $Y'_{n} \to V_{n}$ over $Z$ which is induced by $K_{Y'_{n}}+\Gamma_{Y'_{n}}-a_{n} G_{Y'_{n}}$. 
%Note that $V_{n}$ is quasi-projective since $Z$ is affine. 
By the last part of Step \ref{step4haconxu} in this proof, all lc centers of $(Y'_{n}, \Gamma_{Y'_{n}}-a_{n}G_{Y'_{n}})$ dominate $V_{n}$. 
Since $(Y'_{n},\Gamma_{Y'_{n}})$ is dlt, for any $0<\mu\leq a_{n}$, all lc centers of $(Y'_{n}, \Gamma_{Y'_{n}}-\mu G_{Y'_{n}})$ dominate $V_{n}$. 
Furthermore we have 
%\begin{equation*}\begin{split}&\nu(Y'_{n}/Z, K_{Y'_{n}}+\Gamma_{Y'_{n}}-\mu G_{Y'_{n}})\le\nu(Y'_{n}/Z, K_{Y'_{n}}+\Gamma_{Y'_{n}})\\=&\,\nu(Y'_{n}/Z, (1-\mu)(K_{Y'_{n}}\Gamma_{Y'_{n}}))=\nu(Y'_{n}/Z, K_{Y'_{n}}+\Gamma_{Y'_{n}}-\mu G_{Y'_{n}}-\mu D_{Y'_{n}})\\\leq&\,\nu(Y'_{n}/Z, K_{Y'_{n}}+\Gamma_{Y'_{n}}-\mu G_{Y'_{n}}).\end{split}\end{equation*}
\begin{equation*}\begin{split}&\nu(Y'_{n}/Z, K_{Y'_{n}}+\Gamma_{Y'_{n}}-a_{n} G_{Y'_{n}})\le\nu(Y'_{n}/Z, K_{Y'_{n}}+\Gamma_{Y'_{n}}-\mu G_{Y'_{n}})\\\leq&\nu(Y'_{n}/Z, K_{Y'_{n}}+\Gamma_{Y'_{n}})=\nu\bigl(Y'_{n}/Z, (1-a_{n})(K_{Y'_{n}}+\Gamma_{Y'_{n}})\bigr)\\=&\nu\bigl(Y'_{n}/Z, K_{Y'_{n}}+\Gamma_{Y'_{n}}-a_{n} (G_{Y'_{n}}+D_{Y'_{n}})\bigr)\\\leq&\nu(Y'_{n}/Z, K_{Y'_{n}}+\Gamma_{Y'_{n}}-a_{n} G_{Y'_{n}}).\end{split}\end{equation*}
Therefore $\nu(Y'_{n}/Z, K_{Y'_{n}}+\Gamma_{Y'_{n}}-\mu G_{Y'_{n}})=\nu(Y'_{n}/Z, K_{Y'_{n}}+\Gamma_{Y'_{n}}-a_{n} G_{Y'_{n}})$ and thus
$$\nu(Y'_{n}/Z, K_{Y'_{n}}+\Gamma_{Y'_{n}}-\mu G_{Y'_{n}})={\rm dim}\,V_{n}-{\rm dim}\,Z.$$ 
Similarly we see that $\nu(Y'_{n}/V_{n}, K_{Y'_{n}}+\Gamma_{Y'_{n}}-\mu G_{Y'_{n}})=0$. 
Therefore, by Proposition \ref{prop2.1}, $(Y'_{n}, \Gamma_{Y'_{n}}-\mu G_{Y'_{n}})$ has a good minimal model over $Z$. 
%Now we replace $\{a_{n}\}_{n\geq 1}$ with its subsequence so that $\{a_{n}\}_{n\geq 1}$ is strictly decreasing.   

In this way we can obtain an infinite sequence $\{a_{n}\}_{n\geq 1}$ of sufficiently small positive real numbers such that 
\begin{enumerate}
\item[(i)]
%$a_{n}<1$ for all $n$ and 
${\rm lim}_{n\to \infty}a_{n}=0$, and for any $n$, 
\item[(ii)]
there is a finitely many steps of the $(K_{Y}+\Gamma-a_{n}G)$-MMP  over $Z$ to a good minimal model 
$(Y, \Gamma-a_{n}G)\dashrightarrow (Y'_{n}, \Gamma_{Y'_{n}}-a_{n}G_{Y'_{n}})$
such that 
\begin{enumerate}
\item[(ii-a)]
for any $\alpha< a_{n}$ sufficiently close to $a_{n}$, $K_{Y'_{n}}+\Gamma_{Y'_{n}}-\alpha G_{Y'_{n}}$ is semi-ample over $Z$, 
\item[(ii-b)]
$(Y'_{n},\Gamma_{Y'_{n}})$ is dlt, and 
\item[(ii-c)]
$(Y'_{n}, \Gamma_{Y'_{n}}-\mu G_{Y'_{n}})$ has a good minimal model over $Z$ for any $0<\mu \leq a_{n}$.   
\end{enumerate}
\end{enumerate}
Moreover, by condition (ii-a), we have 
\begin{itemize}
\item[(iii)]
for some sufficiently large and divisible $m_{n}>0$ and a general member $H_{n}\sim_{\mathbb{R},\,Z}m_{n}(K_{Y'_{n}}+\Gamma_{Y'_{n}}-a_{n} G_{Y'_{n}})$, $K_{Y'_{n}}+\Gamma_{Y'_{n}}+H_{n}$ is nef over $Z$. 
\end{itemize} 
Since $K_{Y}+\Gamma\sim_{\mathbb{R},\,Z}G+D$,  
we have $K_{Y}+\Gamma-a_{n}G\sim_{\mathbb{R},\,Z}(1-a_{n})G+D$. 
Therefore divisors contracted by the $(K_{Y}+\Gamma-a_{n}G)$-MMP over $Z$ are components of ${\rm Supp}\,(G+D)$, which does not depend on $n$. 
In this way, we can replace $\{a_{n}\}_{n\geq1}$ with its subsequence so that 
\begin{itemize}
\item[(iv)]
all $Y'_{n}$ are isomorphic in codimension one. 
\end{itemize} 
%We remark that the above conditions (i), (ii), (ii-a), (ii-b), (ii-c), (iii) and (iv) correspond to the conditions stated in Step \ref{step2main} in the proof of Theorem \ref{thm1.3}. 
\end{step3}

\begin{step3}\label{step5.25haconxu}
Assume that $(Y'_{1},\Gamma_{Y'_{1}})$, which was constructed in Step \ref{step5haconxu} in this proof, 
has a good minimal model over $Z$. 
Then we can show by the argument as in \cite[Step 4 in the proof of Proposition 5.1]{has-trivial} that $(Y,\Gamma)$ has a good minimal model over $Z$. 
Here we only outline the argument. 

Since $(Y'_{1},\Gamma_{Y'_{1}})$ has a good minimal model over $Z$, we can obtain a finitely many steps of the $(K_{Y'_{1}}+\Gamma_{Y'_{1}})$-MMP over $Z$ to a good minimal model 
$(Y'_{1},\Gamma_{Y'_{1}})\dashrightarrow (Y'',\Gamma_{Y''}).$
Then this log MMP contains only flips because $N_{\sigma}(K_{Y'_{1}}+\Gamma_{Y'_{1}})=0$ over $Z$, which follows from Step \ref{step4.5haconxu} in this proof. 
Fix a sufficiently small positive real number $\epsilon' \ll a_{1}$. 
Then it is easy to check that $(Y'',\Gamma_{Y''}-\epsilon' G_{Y''})$ has a good minimal model over $Z$. 
Thus we can run the $(K_{Y''}+\Gamma_{Y''}-\epsilon' G_{Y''})$-MMP over $Z$ with scaling and get a good minimal model
$(Y'',\Gamma_{Y''}-\epsilon' G_{Y''})\dashrightarrow(Y''',\Gamma_{Y'''}-\epsilon' G_{Y'''}).$ 
We can also check that this log MMP contains only flips because 
\begin{equation*}
\begin{split}
N_{\sigma}&(K_{Y''}+\Gamma_{Y''}-\epsilon' G_{Y''})\\
\leq&\Bigl(1-\frac{\epsilon'}{a_{1}}\Bigr)N_{\sigma}(K_{Y''}+\Gamma_{Y''})+\frac{\epsilon'}{a_{1}}N_{\sigma}(K_{Y''}+\Gamma_{Y''}-a_{1} G_{Y''})=0 
\end{split}
\end{equation*}
over $Z$. 
In this way, since the above two log MMP's contain only flips, we see that $Y'''$ and $Y_{n}$ are isomorphic in codimension one for any $n$. 
Moreover $K_{Y'''}+\Gamma_{Y'''}-
u' G_{Y'''}$ is semi-ample over $Z$ for any $u' \in[0,\epsilon']$ since $\epsilon'$ is sufficiently small. 
By these facts and conditions (i) and (ii) in Step \ref{step5haconxu} in this proof, 
%construction of $\{a_{n}\}_{n\geq1}$ and $Y_{n}$, 
we see that $(Y''',\Gamma_{Y'''}-a_{n}G_{Y'''})$ is a good minimal model of $(Y,\Gamma-a_{n}G)$ over $Z$ for any $n\gg0$. 
%Note that ${\rm lim}_{n\to \infty}a_{n}=0$. 
Let $p:W\to Y$ and $q:W \to Y'''$ be a common resolution of $Y\dashrightarrow Y'''$. 
Then 
$$p^{*}(K_{Y}+\Gamma-a_{n}G)- q^{*}(K_{Y'''}+\Gamma_{Y'''}-a_{n}G_{Y'''})\geq0$$
for any $n\gg0$. 
By considering the limit $n\to \infty$ we have  
$$p^{*}(K_{Y}+\Gamma)- q^{*}(K_{Y'''}+\Gamma_{Y'''})\geq0.$$
In this way we see that $(Y''',\Gamma_{Y'''})$ is a weak lc model of $(Y,\Gamma)$ over $Z$ such that $K_{Y'''}+\Gamma_{Y'''}$ is semi-ample over $Z$. 
Then $(Y,\Gamma)$ has a good minimal model over $Z$ (cf.~Lemma \ref{lemweakmin}). 

Thus we only have to prove the existence of good minimal model of $(Y'_{1},\Gamma_{Y'_{1}})$ over $Z$, and hence we can replace $(Y,\Gamma)$ with $(Y'_{1},\Gamma_{Y'_{1}})$. 
Then, by condition (iii) in Step \ref{step5haconxu} in this proof, there is $m>0$ and a general member $H\sim_{\mathbb{R},\,Z}m(K_{Y}+\Gamma-a G)$ such that  $K_{Y}+\Gamma+H$ is nef over $Z$.

%, we can assume that 
%\begin{enumerate}\item[$\bullet$]$K_{Y}+\Gamma-a G$ is semi-ample for some $0<a\ll1$,  \item[$\bullet$]for some $m>0$ and a general member $H\sim_{\mathbb{R},\,Z}m(K_{Y}+\Gamma-a G),$ $K_{Y}+\Gamma+H$ is nef, and \item[$\bullet$]$(Y,\Gamma- \mu G)$ has a good minimal model over $Z$ for any $0<\mu\leq a$. \end{enumerate}
\end{step3}

\begin{step3}\label{step5.5haconxu}
In this  step we show that $(Y,\Gamma)$ has a log minimal model by applying the standard argument of the spacial termination (see \cite{fujino-sp-ter} and \cite[Step 4 in the proof of Proposition 5.4]{has-trivial}). 

By Lemma \ref{lemmmp} and condition (ii-c) in Step \ref{step5haconxu} in this proof, we can construct a sequence of steps of the $(K_{Y}+\Gamma)$-MMP over $Z$ with scaling of $H$ 
$$(Y,\Gamma)\dashrightarrow (Y^{1},\Gamma_{Y^{1}})\dashrightarrow\cdots \dashrightarrow (Y^{i},\Gamma_{Y^{i}})\dashrightarrow \cdots$$ 
such that if we set
$$\lambda_{i}={\rm inf}\{\nu\geq0\mid K_{Y^{i}}+\Gamma_{Y^{i}}+\nu H_{Y^{i}}{\rm \; is \; nef\;over}\;Z\}$$
for any $i$, then this log MMP terminates or ${\rm lim}_{i\to \infty}\lambda_{i}=0$ even if this log MMP does not terminate. 
%If we set $\lambda={\rm lim}_{i\to \infty}\lambda_{i}$, then we can construct the above MMP with scaling so that $\lambda\neq \lambda_{i}$ for any $i$. 
%Since $(Y,\Gamma-\mu f^{*}B'')$ has a good minimal model over $Z$ for any $0<\mu\leq a$ and 
%$$K_{Y}+\Gamma+\mu' H'=f^{*}(K_{X}+B+\mu' H)\sim_{\mathbb{R},\,Z}(\mu' m+1)f^{*}(K_{X}+B-\widetilde{\mu} B''),$$
%where $\widetilde{\mu}=\frac{\mu' ma}{\mu' m+1}\in(0,1)$, we have $\lambda=0$ (cf.~\cite[Theorem 4.1 (iii)]{birkar-flip}). 
By construction this log MMP only occurs in ${\rm Supp\,G}$ (cf.~\cite[Step 3 in the proof of Proposition 5.4]{has-trivial}). 
In particular this log MMP only occurs in lc centers of $(Y,\Gamma)$. 
Thus it is enough to show this log MMP terminates near all lc centers of $(Y,\Gamma)$. 
%Indeed, $K_{Y}+\Gamma \sim_{\mathbb{R},\,Z}\frac{1}{m}(H'+af^{*}B'')$ and the $(K_{Y_{i}}+\Gamma_{Y_{i}})$-negative extremal ray $R$ intersect to $H'_{Y_{i}}$ positively.  
%Since $(K_{Y_{i}}+\Gamma_{Y_{i}}+\lambda_{i}H_{Y_{i}})\cdot R=0$, 
%we have $\bigl(R \cdot a(f^{*}B'')_{Y_{i}}\bigr)<0$, where $(f^{*}B'')_{Y_{i}}$ is pushforward of $f^{*}B''$ to $Y_{i}$. 
%Thus the $(K_{Y}+\Gamma)$-MMP only occurs in lc centers of $(Y,\Gamma)$. 

Note that by construction $K_{Y^{i}}+\Gamma_{Y^{i}}$ is semi-ample over $U$ for any $i>0$. 
%Let $A_{Y^{i}}$ be the birational transform of $A_{Y}$ on $Y^{i}$. 
Fix a positive integer $d$ and assume that there exists $i_{0}\gg 0$ such that for any $i\geq i_{0}$ the birational map $(Y^{i_{0}},\Gamma_{Y^{i_{0}}})\dashrightarrow (Y^{i},\Gamma_{Y^{i}})$ of the $(K_{Y}+\Gamma)$-MMP is an isomorphism on an open subset containing all lc centers of $(Y^{i_{0}},\Gamma_{Y^{i_{0}}})$ of dimension $d'< d$.   
Pick an lc center $T$ on $Y$ of dimension $d$ such that for any $i$ the birational map $Y\dashrightarrow Y^{i}$ of the $(K_{Y}+\Gamma)$-MMP is birational near the generic point of $T$. 
Let $T^{i}$ be the birational transform of $T$ on $Y^{i}$. 
%Since $K_{Y^{i}}+\Gamma_{Y^{i}}+A_{Y^{i}}\sim_{\mathbb{R},\,Z}0$ for any $i$,  $(Y^{i},\Gamma_{Y^{i}}+A_{Y^{i}})$ is lc for any $i$ and therefore ${\rm Supp}\,A_{Y^{i}}$ does not contain $T^{i}$. 
We define $\Gamma_{T^{i}}$ by the adjunction $K_{T^{i}}+\Gamma_{T^{i}}=(K_{Y^{i}}+\Gamma_{Y^{i}})\!\!\mid_{T^{i}}$.  %and set $A_{T^{i}}=A_{Y^{i}}\!\!\mid_{T^{i}}$. 
%Then $A_{T^{i}}\geq0$ and $K_{T^{i}}+\Gamma_{T^{i}}+A_{T^{i}}\sim_{\mathbb{R},\,Z}0$, and $(T^{i},\Gamma_{T^{i}}+A_{T^{i}})$ is lc because $T^{i}$ is an lc center of a dlt pair $(Y^{i},\Gamma_{Y^{i}})$ and hence it is normal. 
Then for any $i>0$ the pair $(T^{i},\Gamma_{T^{i}})$ is dlt, and for any $0\ll i<j$ the induced birational map $\tau_{ij}:T^{i}\dashrightarrow T^{j}$ is isomorphic in codimension one and 
\begin{equation*}
\begin{split}
K_{T^{i}}+\Gamma_{T^{i}}=&{\rm lim}_{j\to \infty}(K_{Y^{i}}+\Gamma_{Y^{i}}+\lambda_{j}H_{Y^{i}})\!\!\mid_{T^{i}}\\
=&{\rm lim}_{j\to \infty}(\tau_{ij})_{*}^{-1}\bigl((K_{Y^{j}}+\Gamma_{Y^{j}}+\lambda_{j}H_{Y^{j}})\!\!\mid_{T^{j}}\bigr)
\end{split}
\end{equation*}
because ${\rm lim}_{j\to \infty}\lambda_{j}=0$ (see Lemma \ref{lemmmp}). 
Since $K_{Y^{j}}+\Gamma_{Y^{j}}+\lambda_{j}H_{Y^{j}}$ is nef over $Z$, we see that $K_{T^{i}}+\Gamma_{T^{i}}$ is pseudo-effective over $Z$ for any $i\gg 0$. 

By the above discussion, for any lc center $T$ on $Y$ picked as above, there is $i'_{0}\gg0$ such that for all $i\geq i'_{0}$ the morphism $(T^{i},\Gamma_{T^{i}})\to Z$ satisfies the hypothesis of Theorem \ref{thmhaconxu}. 
Therefore, by the induction hypothesis, we may assume that $(T^{i},\Gamma_{T^{i}})$ has a good minimal model over $Z$ for any $i\geq i_{0}$. 
Now apply the standard argument of the spacial termination (cf.~\cite{fujino-sp-ter} and \cite[Step 4 in the proof of Proposition 5.4]{has-trivial}). 
By recalling that the $(K_{Y}+\Gamma)$-MMP only occurs in lc centers of $(Y,\Gamma)$, we can prove the $(K_{Y}+\Gamma)$-MMP terminates with a log minimal model $\bigl(\widetilde{Y},\Gamma_{\widetilde{Y}}\bigr)$ over $Z$.
Then we can check that $\bigl(\widetilde{Y},\Gamma_{\widetilde{Y}}\bigr)\to Z$ satisfies the hypothesis of Theorem \ref{thmhaconxu}. 
%Thus we see that $(X,B)$ also has a log minimal model over $Z$.  
\end{step3}

\begin{step3}\label{step6haconxu}
Finally we prove that $(X,\Delta)$ has a good minimal model, which is equivalent to that the abundance theorem holds for $\bigl(\widetilde{Y},\Gamma_{\widetilde{Y}}\bigr)$ over $Z$. 
Replacing $(X,\Delta) \to Z$ with $\bigl(\widetilde{Y},\Gamma_{\widetilde{Y}}\bigr)\to Z$, we can assume that $K_{X}+\Delta$ is nef over $Z$.  

%Now we use the notations of Step \ref{step1haconxu} in this proof. 
By hypothesis there exists  a contraction $\widetilde{\varphi}:U^{X}\to \widetilde{V}$ over $U$ and a general ample $\mathbb{R}$-divisor $A$ over $U$ such that $(K_{X}+\Delta)\!\!\mid_{U^{X}}\sim_{\mathbb{R},\,U}\widetilde{\varphi}^{*}A$. 
Note that all components of $A$ are ample over $U$ since $A$ is general. 
We write $\Delta=S+B$, where $S$ is the reduced part of $\Delta$ and $\llcorner B\lrcorner=0$. 
Let $\mathcal{H}_{B}$ be the set of boundary $\mathbb{R}$-divisors whose support is contained in ${\rm Supp}\;B$, and let $\mathcal{H}_{A}$ be the set of effective $\mathbb{R}$-divisors whose support is contained in ${\rm Supp}\;A$. 
Note that any $A'\in \mathcal{H}_{A}$ is ample over $U$ since every component of $A$ is ample over $U$.   
Then the set
$$\mathcal{L}_{\rm nef}=\left\{B' \in \mathcal{H}_{B} \mid (X,S+B') {\rm \;is\;lc\; and\;}K_{X}+S+B'{\rm \;is\;nef\;over\;}Z\right\}$$
is a rational polytope by \cite[Theorem 4.7.2]{fujino-book} and it contains $B$.  
Now we consider the set 
$$\mathcal{T}=\left\{ 
(B',A')\in \mathcal{L}_{\rm nef}\times \mathcal{H}_{A}%\in {\rm WDiv}_{\mathbb{R}}(X)
\mid
%\bullet \;K_{X}+S+B'\sim_{\mathbb{R},\,Z}E',\\
(K_{X}+S+B')\!\!\mid_{U^{X}}\sim_{\mathbb{R},\,U}\widetilde{\varphi}^{*}A'
\right\}$$
which contains $(B, A)$. 
In particular it is not empty.
Therefore, by the argument of rational polytopes, we can find real numbers $r_{1},\cdots, r_{l}>0$ and pairs of $\mathbb{Q}$-divisors $(B_{1},A_{1}),\cdots, (B_{l},A_{l})\in \mathcal{T}$ such that 
$$\sum_{i=1}^{l}r_{i}=1, \quad\sum_{i=1}^{l}r_{i}B_{i}=B \quad{\rm and}\quad \sum_{i=1}^{l}r_{i}A_{i}=A.$$ 
Moreover, we can choose $B_{i}$ sufficiently close to $B$ so that $\llcorner B_{i}\lrcorner=0$ and $(X,S+B_{i})$ is dlt for any $i$. 
Then lc centers of $(X,S+B_{i})$ coincide with those of $(X,\Delta)$. 
Therefore $(X,S+B_{i})\to Z$ satisfies all the conditions of Theorem \ref{thmhaconxu}. 
%\cite[Theorem 1.1]{haconxu-lcc} for any $i$. 
Then we can check that $K_{X}+S+B_{i}$ is nef and log abundant over $Z$ by choice of $(B_{i},A_{i})$. 
Since $X$ and $Z$ are projective, by \cite[Theorem 4.12]{fujino-gongyo}, we see that $K_{X}+S+B_{i}$ is semi-ample over $Z$. 
%Note that we can also apply \cite[Theorem 1.1]{haconxu-lcc} directly to $K_{X}+S+B_{i}$ to check semi-ampleness of $K_{X}+S+B_{i}$ over $Z$. 
Because we have $K_{X}+\Delta=\sum_{i=1}^{l}r_{i}(K_{X}+S+B_{i})$, we see that $K_{X}+\Delta$ is semi-ample over $Z$. 
So we are done. 
\end{step3}

\end{proof}

\subsection{Proof of Theorem \ref{thm1.3}}\label{subsec4.25}
%Next we prove Theorem \ref{thm1.3}.
In this subsection we consider a result which is stronger than Theorem \ref{thm1.3}, and by proving it without applying \cite[Theorem 1.1]{haconxu-lcc} or \cite[Theorem 1.1]{birkar-flip}, we give proof of Theorem \ref{thm1.3} avoiding use of \cite[Theorem 1.1]{birkar-flip}. 

We prove the following:
\begin{thm}\label{thmmain2}
Let $\pi:X \to Z$ be a projective morphism of normal quasi-projective varieties and $(X,B)$ be a log canonical pair with a boundary $\mathbb{R}$-divisor $B$ such that $K_{X}+B$ is pseudo-effective over $Z$. 
Assume that there exists an effective $\mathbb{R}$-divisor $A$ on $X$ and an open subset $U\subset Z$ such that 
\begin{itemize}
\item
the pair $\bigl(\pi^{-1}(U),(B+A)\!\!\mid_{\pi^{-1}(U)}\bigr)$ is log canonical, 
\item
$(K_{X}+B+A)\!\!\mid_{\pi^{-1}(U)}\sim_{\mathbb{R},\,U}0$, and 
\item
any lc center of $(X,B)$ intersects $\pi^{-1}(U)$. 
\end{itemize}
Then $(X,B)$ has a good minimal model over $Z$. 
\end{thm}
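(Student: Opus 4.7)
The plan is to reduce to Theorem \ref{thmhaconxu} applied to $(X,B)$ and the open set $U$. Since the hypothesis on lc centers is already assumed, the missing ingredient is to show that $(\pi^{-1}(U), B|_{\pi^{-1}(U)})$ admits a good minimal model over $U$. As preliminary reductions, I would use Lemma \ref{lembirequiv} together with a dlt blow-up to assume $(X,B)$ is $\mathbb{Q}$-factorial dlt, preserving the hypotheses over $\pi^{-1}(U)$, and compactify $X$ and $Z$ as in the first step of the proof of Theorem \ref{thmhaconxu} to reduce to the projective setting.

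The key structural observation is that on a very general fiber $F$ of $\pi$, which lies over $U$ since $Z \setminus U$ is a proper closed subset, the relation $(K_X+B+A)|_{\pi^{-1}(U)} \sim_{\mathbb{R},U} 0$ gives $(K_X+B)|_F \sim_{\mathbb{R}} -A|_F$. This divisor is simultaneously pseudo-effective (from pseudo-effectivity of $K_X+B$ over $Z$) and anti-effective (since $A|_F \geq 0$), and on the projective $F$ this forces $A|_F = 0$ and $(K_X+B)|_F \sim_{\mathbb{R}} 0$. In particular $\nu(X/Z, K_X+B)=0$, and $K_X+B$ is abundant with invariant Iitaka dimension zero along very general fibers; this is the structural fact that makes Proposition \ref{prop2.1} a candidate tool.

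To construct the good minimal model over $U$, I would take the relative Iitaka fibration over $U$ associated to $K_X+B+A$, decompose $B$ into horizontal and vertical parts relative to it, and run a $(K_X+B-tG)$-MMP over $U$ with scaling of an ample divisor for sufficiently small $t>0$ (where $G$ is the vertical piece coming from $K_X+B+A \sim_{\mathbb{R},U} 0$), following the template of the MMP-with-scaling argument in the proof of Theorem \ref{thmhaconxu}. This produces a model to which Proposition \ref{prop2.1} applies and thus yields the desired good minimal model over $U$. Theorem \ref{thmhaconxu} applied to $(X,B)$ and $U$ then gives the good minimal model of $(X,B)$ over $Z$, with the abundance step supplied by \cite{fujino-gongyo} through the projective reduction. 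The main obstacle is controlling this MMP over $U$: since lc centers of $(X,B)$ only meet $\pi^{-1}(U)$ rather than dominate $U$, Proposition \ref{prop2.1} is not directly applicable, and one must use the dlt-model-with-scaling technique to isolate the horizontal boundary and ensure termination, mirroring the delicate arguments in the proof of Theorem \ref{thmhaconxu}.
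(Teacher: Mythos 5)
Your structural observations are sound: the fiber computation showing $A|_{F}=0$ on a very general fiber, hence that $A$ is vertical over $Z$ and $\nu(X/Z,K_{X}+B)=0$, is exactly the starting point of the paper's Step \ref{step2app}, and the subsequent ingredients you list (the decomposition of $B$ into a part whose lc centers dominate the base plus a vertical reduced part, the perturbation by $-tG$ so that Proposition \ref{prop2.1} applies, MMP with scaling, special termination, and abundance via \cite{fujino-gongyo} after a projective reduction) are the right ones. (A small point: since $\nu(X/Z,K_{X}+B)=0$ there is no Iitaka fibration to take — one simply puts $V=Z$ in Proposition \ref{prop2.1}; and the relative Iitaka fibration of $K_{X}+B+A$ over $U$ is trivial because that divisor is $\sim_{\mathbb{R},U}0$.)

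The genuine gap is in your overall reduction, which is circular as sketched. You propose to first produce a good minimal model of $\bigl(\pi^{-1}(U),B|_{\pi^{-1}(U)}\bigr)$ over $U$ and then invoke Theorem \ref{thmhaconxu}. But that first step is precisely Theorem \ref{thm1.3} for the quasi-projective morphism $\pi^{-1}(U)\to U$, and your own route to it — compactify and apply \cite[Theorem 4.12]{fujino-gongyo} for abundance — destroys the hypothesis $K+B+A\sim_{\mathbb{R}}0$ over the whole base: after compactifying, that relation survives only over the open subset $U$ of the new base, so you are again facing the full statement of Theorem \ref{thmmain2} in the same dimension with a nontrivial $U$. (The same issue appears in special termination: the adjunctions $K_{T^{i}}+\Gamma_{T^{i}}+A_{T^{i}}$ are $\mathbb{R}$-linearly trivial only over $U$, so the lower-dimensional input must also be the general-$U$ statement.) This is exactly why the paper formulates Theorem \ref{thmmain2} with the open set $U$ and inducts on it directly, never passing through a good minimal model over $U$: Theorem \ref{thmhaconxu} is used only in the compactification step, applied over the base $X_{c}$ with open subset $X$ (where the good-minimal-model hypothesis is automatic for a crepant birational model), to extend the crepant structure to the boundary. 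To repair your argument you would either have to run the entire induction on the general-$U$ statement — at which point the detour through Theorem \ref{thmhaconxu} over $Z$ buys nothing — or supply a relative abundance theorem for $\mathbb{R}$-divisors on quasi-projective varieties in place of \cite{fujino-gongyo}, which is precisely what the paper is organized to avoid.
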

%We note that in Theorem \ref{thmmain2} the pair $(X,B+A)$ is not necessarily globally lc.  
%Theorem \ref{thmmain2} is clearly stronger than Theorem \ref{thm1.3} because the case $U=Z$ in the theorem is Theorem \ref{thm1.3}. 
%We also note that the theorem immediately follows from Theorem \ref{thm1.3} and Theorem \ref{thmhaconxu}. 
When $U=Z$ in Theorem \ref{thmmain2} this theorem is Theorem \ref{thm1.3}. 
So it is enough to show this theorem.
%From now on we prove the theorem with Theorem \ref{thmhaconxu}. 

%From now on we prove Theorem \ref{thmmain2}. 
%Note that we do not assume $(X,B+A)$ is lc though we can assume it (cf.~Step \ref{step1reduc} in the proof of Lemma \ref{lemreduc}). 
%So we forget this assumption. 

\begin{proof}[Proof of Theorem \ref{thmmain2}]
%By Lemma \ref{lemreduc} we can assume that $X$ and $Z$ are projective. 
We prove it by induction on the dimension of $X$. 
By taking the Stein factorization we can assume that $\pi$ is a contraction.
%The proof is very similar to the proof of Theorem \ref{thmhaconxu}. 
%Note that by the first condition of the theorem $A\!\!\mid_{\pi^{-1}(U)}$ is $\mathbb{R}$-Cartier. 
%We prove Theorem \ref{thmmain2} with several steps. 
\begin{step5}\label{step1app}
In this step we show we may assume that $(X,B)$ is $\mathbb{Q}$-factorial dlt and $X$ and $Z$ are projective. 
%We set $\Delta=B+A$.

By hypothesis there are compactifications $X\subset X_{c}$ and $Z\subset Z_{c}$, where $X_{c}$ (resp.~$Z_{c}$) is normal projective and $X_{c}$ (resp.~$Z_{c}$) contains $X$ (resp.~$Z$) as an open subset, and there is a contraction $\pi_{c}: X_{c}\to Z_{c}$ such that  $\pi_{c}\!\!\mid_{X}=\pi$. 
Then we have $\pi_{c}^{-1}(Z)=X$ and $\pi_{c}^{-1}(U)=\pi^{-1}(U)$ by construction.
Let $B_{c}$ be the closure of $B$ in $X_{c}$. 
We take a log resolution $\phi_{c}:\widetilde{X}_{c}\to X_{c}$ of $\bigl(X_{c},{\rm Supp}\,B_{c}\cup (X_{c}\backslash X)\bigr)$.
%and we mean the restriction of $\phi^{c}$ over $X$ by denoting 
Put $\widetilde{X}=\phi_{c}^{-1}(X)$
and let $\phi:\widetilde{X}\to X$ be the restriction of $\phi_{c}$ over $X$.
Then we can write
%\begin{equation*}\begin{split}
$$K_{\widetilde{X}}+\widetilde{B}=\phi^{*}(K_{X}+B)+F,$$
%\end{split}\end{equation*}
where $\widetilde{B}\geq 0$ and $F\geq 0$ have no common components and $\widetilde{B}_{X}=B$.
%, and similarly $\widetilde{\Delta}\geq 0$ and $F_{\Delta}\geq 0$ have no common components and $\widetilde{\Delta}_{X}=\Delta$. 
Let $\widetilde{B}_{c}$ be the closure of $\widetilde{B}$ in $\widetilde{X}_{c}$. 
Since $(X,B)$ is lc, by construction, $\bigl(\widetilde{X}_{c},\widetilde{B}_{c}\bigr)$ is lc and log smooth. 
%By construction we also see that $\widetilde{B}_{c}\leq\widetilde{\Delta}_{c}$ and $\bigl(\widetilde{X}_{c},\widetilde{\Delta}_{c}\bigr)$ is log smooth. 
By taking more blow-ups if necessary, we can assume that all lc centers of $\bigl(\widetilde{X}_{c},\widetilde{B}_{c}\bigr)$ intersect $\widetilde{X}$ (cf.~Step \ref{step1sub} in the proof of Proposition \ref{prop2.1}). 
Then, by the third condition of Theorem \ref{thmmain2}, we can check that all lc centers of $\bigl(\widetilde{X}_{c},\widetilde{B}_{c}\bigr)$ intersect $(\pi_{c}\circ \phi_{c})^{-1}(U)$. 
%Since any lc center of $\bigl(\widetilde{X}_{c},\widetilde{B}_{c}\bigr)$ is also  an lc center of $\bigl(\widetilde{X}_{c},\widetilde{\Delta}_{c}\bigr)$, 
%all lc centers of $\bigl(\widetilde{X}_{c},\widetilde{B}_{c}\bigr)$ intersect $\widetilde{X}$. 
%Therefore, by construction of $\bigl(\widetilde{X}_{c},\widetilde{B}_{c}\bigr)$ and the third condition of Theorem \ref{thmmain2}, all lc centers of $\bigl(\widetilde{X}_{c},\widetilde{B}_{c}\bigr)$ intersects $(\pi_{c}\circ\phi_{c})^{-1}(U)$. 
We also see that $\bigl(\widetilde{X}_{c},\widetilde{B}_{c}\bigr)\to X_{c}$ and $X\subset X_{c}$ satisfy all the conditions of Theorem \ref{thmhaconxu}. 
Therefore we can run the $\bigl(K_{\widetilde{X}_{c}}+\widetilde{B}_{c}\bigr)$-MMP over $X_{c}$ with scaling and get a good minimal model 
%$\bigl(\widetilde{X}_{c},\widetilde{B}_{c}\bigr)\dashrightarrow (X'_{c},B_{X'_{c}})$
$\phi'_{c}:(X'_{c},B_{X'_{c}})\to X_{c}$, where $B_{X'_{c}}$ is the birational transform of $\widetilde{B}_{c}$ on $X'_{c}$. 
Then all lc centers of $(X'_{c},B_{X'_{c}})$ intersect $(\pi_{c}\circ\phi'_{c})^{-1}(U)$. 
Set $X'={\phi'_{c}}^{-1}(X)$ and let $\phi':X'\to X$ be the restriction of $\phi'_{c}$ over $X$. 
Put  $U^{X'_{c}}=(\pi_{c}\circ \phi_{c}')^{-1}(U)$ and $B_{X'}=(B_{X'_{c}})\!\!\mid_{X'}$. 
%We set $X'={\phi'_{c}}^{-1}(X)$. 
%$$\xymatrix{\widetilde{X}\ar[ddrr]_{\phi}\ar@{}[r]|*{\subset}&\widetilde{X}_{c}\ar[dr]_{\phi_{c}}\ar@{-->}[rr]&&X'_{c}\ar[dl]^{\phi'_{c}}\ar@{}[r]|*{\supset}&X'\ar[ddll]^{\phi'}\\&&X_{c}\ar@{}[d]|*{\cup} &&\\&&X&&}$$
%$$\xymatrix{\widetilde{X}_{c}\ar[dr]_(0.4){\phi_{c}}\ar@{-->}[rr]&&X'_{c}\ar[dl]^(0.4){\phi'_{c}}\ar@{}[r]|*{\supset}&X'\ar[dl]^(0.4){\phi'}\ar@{}[r]|*{\supset}&U^{X'_{c}}\ar[dl]\\&X_{c}\ar@{}[r]|*{\supset} &X\ar@{}[r]|*{\supset} &\pi_{c}^{-1}(U)\\}$$
$$\xymatrix{\widetilde{X}_{c}\ar[dr]_(0.4){\phi_{c}}\ar@{-->}[rr]&&X'_{c}\ar[dl]^(0.4){\phi'_{c}}\ar@{}[r]|*{\supset}&X'\ar[dl]^(0.4){\phi'}\\&X_{c}\ar@{}[r]|*{\supset} &X&\\}$$
% and $\Delta_{X'}=(\Delta_{X'_{c}})\!\!\mid_{X'}$. 
By construction we have
%\begin{equation*}\begin{split}
$K_{X'}+B_{X'}=\phi'^{*}(K_{X}+B).$
%\end{split}\end{equation*}
%where $F_{X'}$ is the birational transform of $F$ on $X'$, which is an effective $\phi'$-exceptional divisor on $X'$.
%We note that $(X'_{c},\Delta_{X'_{c}})$ is $\mathbb{Q}$-factorial dlt and $B_{X'_{c}}\leq \Delta_{X'_{c}}$ by construction.
%We put $U^{X'_{c}}=(\pi_{c}\circ \phi_{c}')^{-1}(U)$. 
On the other hand, by the first condition of Theorem \ref{thmmain2}, 
%Now we recall that
$A\!\!\mid_{\pi_{c}^{-1}(U)}$ is $\mathbb{R}$-Cartier. %, which is stated at the start of this proof. 
Therefore we can define the pullback $(\phi_{c}'\!\!\mid_{U^{X'_{c}}})^{*}(A\!\!\mid_{\pi_{c}^{-1}(U)})$. 
Let $A_{X'_{c}}$ be the closure of $(\phi_{c}'\!\!\mid_{U^{X'_{c}}})^{*}(A\!\!\mid_{\pi_{c}^{-1}(U)})$ in  $X'_{c}$. 
%We set $A_{X'_{c}}=\Delta_{X'_{c}}-B_{X'_{c}}\geq0$ and $A_{X'}=A_{X'_{c}}\!\!\mid_{X'}$. 
%By construction, we see that $(X'_{c},B_{X'_{c}}+A_{X'_{c}})$ and $(X'_{c},B_{X'_{c}})$ are $\mathbb{Q}$-factorial dlt and 
%\begin{equation*}\begin{split}&K_{X'}+B_{X'}+A_{X'}=\phi'^{*}(K_{X}+B+A)\qquad{\rm and}\\ &K_{X'}+B_{X'}=\phi'^{*}(K_{X}+B)+F_{X'}.\end{split}\end{equation*}Now we have a contraction $\pi_{c}\circ\phi'_{c}:X'_{c}\to Z_{c}$. Therefore, if let $U^{X'_{c}}$ be the inverse image of $U\subset Z\subset Z_{c}$ on $X'_{c}$, 
Then over $U$ we have 
$$(K_{X'_{c}}+B_{X'_{c}}+A_{X'_{c}})\!\!\mid_{U^{X'_{c}}}=(\phi_{c}'\!\!\mid_{U^{X'_{c}}})^{*}\bigl((K_{X}
+B+A)\!\!\mid_{\pi_{c}^{-1}(U)}\bigr)\sim_{\mathbb{R},\,U}0.$$
Since we have $K_{X'}+B_{X'}=\phi'^{*}(K_{X}+B)$, by recalling that all lc centers of $(X'_{c},B_{X'_{c}})$ intersect $U^{X'_{c}}$, we see that $(X'_{c},B_{X'_{c}})\to Z_{c}$ and $A_{X'_{c}}$ satisfy all the conditions of Theorem \ref{thmmain2}.  
%because $\pi_{c}^{-1}(U)=\pi^{-1}(U)$ and the second condition of Theorem \ref{thmmain2}. 
%Furthermore all lc centers of $(X'_{c},B_{X'_{c}})$ intersect $U^{X'_{c}}$. 
%Indeed, if $D$ is any prime divisor over $X'_{c}$ such that $a(D,X'_{c},B_{X'_{c}})=-1$, then we have $a\bigl(D,X'_{c},tB_{X'_{c}}+(1-t)\Delta_{X'_{c}}\bigr)=-1$ for any sufficiently small $t>0$ because  $\Delta_{X'_{c}}\geq B_{X'_{c}}$. 
%Moreover we can check that  %$B_{X'_{c}}+A_{X'_{c}}=\Delta_{X'_{c}}$ and
% $\widetilde{X}_{c}\dashrightarrow X'_{c}$ 
 %is a finitely many steps of the $(K_{\widetilde{X}_{c}}+\widetilde{\Delta}_{c})$-MMP, 
% it
 % is a finitely many steps of the $\bigl(K_{\widetilde{X}_{c}}+t\widetilde{B}_{c}+(1-t)\widetilde{\Delta}_{c}\bigr)$-MMP.
%Therefore we have $a\bigl(D,\widetilde{X}_{c},t\widetilde{B}_{c}+(1-t)\widetilde{\Delta}_{c}\bigr)=-1$. 
%Since $\bigl(\widetilde{X}_{c},\widetilde{\Delta}_{c}\bigr)$ is lc, we see that $a\bigl(D,\widetilde{X}_{c}, \widetilde{B}_{c}\bigr)=-1$. 
%Recalling that all lc centers of $\bigl(\widetilde{X}_{c},\widetilde{B}_{c}\bigr)$ intersect $(\pi_{c}\circ\phi_{c})^{-1}(U)$, we see that the image of $D$ on $X'_{c}$ intersects $U^{X'_{c}}$. 
%Thus all lc centers of $(X'_{c},B_{X'_{c}})$ intersect $U^{X'_{c}}$,  
%and therefore $(X'_{c},B_{X'_{c}})\to Z_{c}$ and $A_{X'_{c}}$ satisfy the hypothesis of Theorem \ref{thmmain2}. 
Furthermore, if $(X'_{c},B_{X'_{c}})$ has a good minimal model over $Z_{c}$, then its restriction over $Z\subset Z_{c}$ is a good minimal model of $(X',B_{X'})$ over $Z$. 
Since $K_{X'}+B_{X'}=\phi'^{*}(K_{X}+B)$, by Lemma \ref{lembirequiv}, $(X,B)$ has a good minimal model over $Z$. 

In this way, we can replace $(X,B)\to Z$ and $A$ with $(X'_{c},B_{X'_{c}})\to Z_{c}$ and $A_{X'_{c}}$, and hence we may assume that $(X,B)$ is $\mathbb{Q}$-factorial dlt and $X$ and $Z$ are projective. 
\end{step5}
\begin{step5}\label{step2app}
%We can easily check that the hypothesis of Theorem \ref{thmmain2} still holds after we take dlt blow-ups. 
%So we can freely take dlt blow-ups and replace $(X,B)$ its dlt model.  

Taking a dlt blow-up as in \cite[Corollary 2.14]{has-trivial}, we may assume that we can decompose $B=B'+B''$, where $B'\geq 0$ and $B''$ is a reduced divisor, such that $B''$ is vertical over $Z$ and all lc centers of $(X,B')$ dominate $Z$. 

%We apply the argument as in Step \ref{step3haconxu} in the proof of Theorem \ref{thmhaconxu}. 
Since $(K_{X}+B+A)\!\!\mid_{\pi^{-1}(U)}\sim_{\mathbb{R},\,U}0$ and $K_{X}+B$ is pseudo-effective over $Z$, we see that $A$ is vertical over $Z$. 
Then we have $K_{X}+B\sim_{\mathbb{R},\,Z}E$ for some $E\geq0$ by \cite[Corollary 6.1]{gongyo1}. 
Note that $E$ is vertical over $Z$. % because $K_{X}+B$ is trivial over the general fiber. 
Since $B''$ is vertical over $Z$, by adding an effective Cartier divisor to $E$ if necessary, we can assume that ${\rm Supp}\,E\supset {\rm Supp}\,B''$. 
Then, by the same argument as in Step \ref{step3haconxu} in the proof of Theorem \ref{thmhaconxu}, we obtain a dlt blow-up $f:(Y, \Gamma=\Gamma'+\Gamma'')\to (X,B)$ such that
\begin{enumerate}
\item[(I)]
$\Gamma''$ is reduced and vertical over $Z$ and $\Gamma'\geq0$ such that all lc centers of $(Y,\Gamma')$ dominate $Z$, and
\item[(II)]
$K_{Y}+\Gamma\sim_{\mathbb{R},\,Z}G+D$ where $G\geq0$ and $D\geq0$ have no common components such that 
\begin{enumerate}
\item[(II-a)]
${\rm Supp}\,G={\rm Supp}\;\Gamma''$, and 
\item[(II-b)]
for any sufficiently small $\epsilon>0$, $(Y,\Gamma+\epsilon D)$ is dlt and $a(P,Y,\Gamma+\epsilon D)=-1$ if and only if $a(P,Y,\Gamma)=-1$ for any prime divisor $P$ over $Y$. 
\end{enumerate}
\end{enumerate}
Put $A_{Y}=f^{*}A$. 
Then we can easily check that $(Y,\Gamma)\to Z$ and $A_{Y}$ satisfy the hypothesis of Theorem \ref{thmmain2} and it is sufficient to show the existence of good minimal model of $(Y,\Gamma)$ over $Z$. 

%In the rest of the proof we devote to show the existence of good minimal model of $(Y',\Gamma_{Y'})$ over $Z$. 
\end{step5}

\begin{step5}\label{step3app}
We keep the track of Step \ref{step4haconxu}--\ref{step6haconxu} in the proof of Theorem \ref{thmhaconxu}. 

Pick $t>0$ sufficiently small. 
By conditions (II-a) and (I), we see that all lc centers of $(Y,\Gamma-tG)$ dominate $Z$ for any $t$. 
By construction we also have $\nu(Y/Z,K_{Y}+\Gamma-tG)=0$. 
Therefore, by Proposition \ref{prop2.1},  $(Y,\Gamma-tG)$ has a good minimal model over $Z$. 

From now on we prove $(Y,\Gamma)$ has a good minimal model over $Z$. 
%The basic strategy is the same as Step \ref{step1.25main}--\ref{step4main} in the proof of Theorem \ref{thm1.3}. 
First, as in Step \ref{step4.5haconxu} in the proof of Theorem \ref{thmhaconxu}, we may assume that there is a big divisor $M$ such that $K_{Y}+\Gamma+\delta M$ is movable over $Z$ for any sufficiently small $\delta>0$. 
Note that after this process $(Y,\Gamma)\to Z$ and $A_{Y}$ satisfy the hypothesis of Theorem \ref{thmmain2} and we have $\Gamma=\Gamma'+\Gamma''$ and $K_{Y}+\Gamma\sim_{\mathbb{R},\,Z}G+D$, where $\Gamma'$, $\Gamma''$, $G$ and $D$ satisfy conditions (I), (II), (II-a) and (II-b) stated in Step \ref{step2app} in this proof. 

Next, as in Step \ref{step5haconxu} in the proof of Theorem \ref{thmhaconxu}, we can find an infinite sequence $\{a_{n}\}_{n\geq 1}$ of sufficiently small positive real numbers such that 
\begin{enumerate}
\item[(i)]
${\rm lim}_{n\to \infty}a_{n}=0$, and for any $n$, 
\item[(ii)]
there is a finitely many steps of the $(K_{Y}+\Gamma-a_{n}G)$-MMP  over $Z$ to a good minimal model $(Y, \Gamma-a_{n}G)\dashrightarrow (Y'_{n}, \Gamma_{Y'_{n}}-a_{n}G_{Y'_{n}})$
such that 
\begin{enumerate}
%\item[$\bullet$]$(X'_{n},B_{X'_{n}})$ is lc, 
\item[(ii-a)]
for any $\alpha< a_{n}$ sufficiently close to $a_{n}$, $K_{Y'_{n}}+\Gamma_{Y'_{n}}-\alpha G_{Y'_{n}}$ is semi-ample over $Z$, and
%\end{enumerate}
%\end{enumerate} 
%Moreover, by replacing $\{a_{n}\}_{n \geq 1}$ with its subsequence we may assume that $(Y',\Gamma_{Y'}+a_{n}D_{Y'})$ is dlt for any $n$. 
%By applying the arguments as in Step \ref{step5haconxu} in the proof of Theorem \ref{thmhaconxu},
%Then we can check that for any $n$ 
%\begin{enumerate}
\item[(ii-b)]
$(Y'_{n},\Gamma_{Y'_{n}})$ is dlt. 
\end{enumerate}
\end{enumerate}
Let $A_{Y'_{n}}$ be the birational transform of $A_{Y}$ on $Y'_{n}$. 
As in Step \ref{step5haconxu} in the proof of Theorem \ref{thmhaconxu}, we can also check that $(Y'_{n},\Gamma_{Y'_{n}})\to Z$ and $A_{Y'_{n}}$ satisfy all the conditions of Theorem \ref{thmmain2} for any $n$. 
Moreover, as in the first paragraph of this step, we see that 
\begin{enumerate}
\item[(ii-c)]
$(Y'_{n}, \Gamma_{Y'_{n}}-\mu G_{Y'_{n}})$ has a good minimal model over $Z$ for any $0<\mu \leq a_{n}$.   
\end{enumerate}
By condition (ii-a), we have 
\begin{itemize}
\item[(iii)]
for some sufficiently large and divisible $m_{n}>0$ and a general member $H_{n}\sim_{\mathbb{R},\,Z}m_{n}(K_{Y'_{n}}+\Gamma_{Y'_{n}}-a_{n} G_{Y'_{n}})$, $K_{Y'_{n}}+\Gamma_{Y'_{n}}+H_{n}$ is nef over $Z$. 
\end{itemize}
Finally, replacing $\{a_{n}\}_{n\geq1}$ with its subsequence, we may assume that  
\begin{itemize}
\item[(iv)]
all $Y'_{n}$ are isomorphic in codimension one. 
\end{itemize} 
The above conditions (i), (ii), (ii-a), (ii-b), (ii-c), (iii) and (iv) are corresponding to the conditions in Step \ref{step5haconxu} in the proof of Theorem \ref{thmhaconxu}. 

\end{step5}

\begin{step5}\label{step4app} 
%In this  step we show $(Y,\Gamma_{Y})$ has a log minimal model over $Z$. % by applying the standard argument of the spacial termination (see \cite{fujino-sp-ter} and \cite[Step 4 in the proof of Proposition 5.4]{has-trivial}). 
%But the argument of Step \ref{step2.5main} and Step \ref{step3main} in the proof of Theorem \ref{thm1.3} works with some minor changes because the hypothesis of Theorem \ref{thmmain2} is very similar to the hypothesis of Theorem \ref{thm1.3}. 
%So we only outline the argument. 
As in Step \ref{step5.25haconxu} in the proof of Theorem \ref{thmhaconxu}, we see that it is enough to show the existence of good minimal models of $(Y'_{1},\Gamma_{Y'_{1}})$ over $Z$. 
So we apply Lemma \ref{lemmmp} to $(Y'_{1},\Gamma_{Y'_{1}})$ and we construct a sequence of steps of the $(K_{Y'_{1}}+\Gamma_{Y'_{1}})$-MMP over $Z$ with scaling of $H_{1}$ 
$$(Y'_{1}=Y^{0},\Gamma_{Y'_{1}}=\Gamma_{Y^{0}})\dashrightarrow\cdots \dashrightarrow (Y^{i},\Gamma_{Y^{i}})\dashrightarrow \cdots$$ 
such that if we set
$\lambda_{i}={\rm inf}\{\nu\geq0\mid K_{Y^{i}}+\Gamma_{Y^{i}}+\nu H_{Y^{i}}{\rm \; is \; nef\;over}\;Z\}$
for any $i\geq 0$, then this log MMP terminates or ${\rm lim}_{i\to \infty}\lambda_{i}=0$ even if this log MMP does not terminate.  %for any $i$. 
Let $\pi_{i}:Y^{i}\to Z$ be the induced morphism. 
Then we have $(K_{Y^{i}}+\Gamma_{Y^{i}}+A_{Y^{i}})\!\!\mid_{{\pi_{i}}^{\!\!-1}(U)}\sim_{\mathbb{R},\,U}0$ and  the pair $\bigl({\pi_{i}}^{-1}(U), (\Gamma_{Y^{i}}+A_{Y^{i}})\!\!\mid_{{\pi_{i}}^{\!\!-1}(U)}\bigr)$ is lc. 
Moreover all lc centers of $(Y^{i},\Gamma_{Y^{i}})$ intersect $\pi_{i}^{-1}(U)$. 
Because $(Y^{i},\Gamma_{Y^{i}})$ is dlt, we see that ${\rm Supp}\,A_{Y^{i}}$ does not contain any lc center of $(Y^{i},\Gamma_{Y^{i}})$. 
Then the argument of Step \ref{step5.5haconxu} in the proof of Theorem \ref{thmhaconxu} works with minor changes by the induction hypothesis and the standard argument of the spacial termination. 
Thus we see that the above $(K_{Y'_{1}}+\Gamma_{Y'_{1}})$-MMP over $Z$ terminates with a log minimal model $\bigl(\widetilde{Y},\Gamma_{\widetilde{Y}}\bigr)$. 
Furthermore we can check that $\bigl(\widetilde{Y},\Gamma_{\widetilde{Y}}\bigr)\to Z$ and $A_{\widetilde{Y}}$ satisfy all the conditions of Theorem \ref{thmmain2}. 
\end{step5}

\begin{step5}\label{step5app}
Finally we prove that $(X,B)$ has a good minimal model over $Z$, which is equivalent to that the abundance theorem holds for $\bigl(\widetilde{Y},\Gamma_{\widetilde{Y}}\bigr)$ over $Z$. % by Step \ref{step3main}. 
By replacing $\pi:(X,B)\to Z$ and $A$ with $\bigl(\widetilde{Y},\Gamma_{\widetilde{Y}}\bigr)\to Z$ and $A_{\widetilde{Y}}$, we can assume that $K_{X}+B$ is nef over $Z$. 
Note that $(X,B)$ is $\mathbb{Q}$-factorial dlt. 
Let $\mathcal{H}_{B}$ %\subset {\rm WDiv}_{\mathbb{R}}(X)$
  (resp.~$\mathcal{H}_{A}$%\subset {\rm WDiv}_{\mathbb{R}}(X)$
  ) be the set of effective $\mathbb{R}$-divisors whose support is contained in ${\rm Supp}\,B$ (resp.~${\rm Supp}\,A$). 
Then the set
$$\mathcal{L}_{\rm nef}=\left\{B' \in \mathcal{H}_{B} \mid (X,B') {\rm \;is\;lc\; and\;}K_{X}+B'{\rm \;is\;nef\;over\;}Z\right\}$$
is a rational polytope by \cite[Theorem 4.7.2]{fujino-book} and it contains $B$.
Now we consider the set  
\begin{equation*}\mathcal{T}=
\left\{ 
(B',A')\in \mathcal{L}_{{\rm nef}}\times \mathcal{H}_{A}
 \left|
\begin{array}{l}
\bullet\; \bigl(\pi^{-1}(U),(B'+A')\!\!\mid_{\pi^{-1}(U)}\bigr){\rm \;is\;lc,\; and}\!\!\\
\bullet\;(K_{X}+B'+A')\!\!\mid_{\pi^{-1}(U)}\sim_{\mathbb{R},\,U}0.\\
\end{array}
\right.\right\}
\end{equation*}
which contains $(B,A)$. 
In particular it is not empty.  
Therefore, by the argument of rational polytopes, we can find real numbers $r_{1},\,\cdots, r_{l}>0$ and pairs of $\mathbb{Q}$-divisors $(B_{1},A_{1}),\,\cdots ,(B_{l},A_{l})\in \mathcal{T}$ such that 
$$\sum_{i=1}^{l}r_{i}=1, \quad\sum_{i=1}^{l}r_{i}B_{i}=B \quad{\rm and}\quad \sum_{i=1}^{l}r_{i}A_{i}=A.$$ 
Moreover, we can choose $B_{i}$ sufficiently close to $B$ so that $(X, B_{i})$ are dlt and all lc centers of $(X,B_{i})$ coincide with those of $(X,B)$. 
Then $(X,B_{i})\to Z$ and $A_{i}$ satisfy all the conditions of Theorem \ref{thmmain2}. 
By the second condition of Theorem \ref{thmmain2}% and the induction hypothesis
, we can check that $K_{X}+B_{i}$ is nef and log abundant for any $1\leq i\leq l$. 
Since $X$ and $Z$ are projective, by \cite[Theorem 4.12]{fujino-gongyo}, we see that $K_{X}+B_{i}$ is semi-ample over $Z$. 
Because we have $K_{X}+B=\sum_{i=1}^{l}r_{i}(K_{X}+B_{i})$, we see that $K_{X}+B$ is semi-ample over $Z$. 
So we are done. 
\end{step5}

\end{proof}

\subsection{Proof of other results}\label{subsec4.3}
Finally we show other results. 

\begin{proof}[Proof of Corollary \ref{corlcc}]
The proof of \cite[Corollary 1.2]{haconxu-lcc} works with no changes by using Theorem \ref{thmhaconxu} instead of \cite[Theorem 1.1]{haconxu-lcc}. 
By construction of $(X,\Delta)$ we can easily check that $(X,\Delta)$ satisfies the last assertion of Corollary \ref{corlcc}. 
So we are done. 
\end{proof}

\begin{proof}[Proof of Theorem \ref{corflip}]
We can assume that $W$ is affine because we only have to prove the existence of log canonical model of $(X,\Delta)$ over $W$. 
In particular we may assume that $X$ and $W$ are quasi-projective. 
Since $-(K_{X}+\Delta)$ is ample over $W$, there is a general ample divisor $A\geq0$ such that $(X,\Delta+A)$ is lc and $K_{X}+\Delta+A\sim_{\mathbb{R},\,W}0$. 
Then the result follows from Theorem \ref{thm1.3} and so we are done.
\end{proof}

\begin{proof}[Proof of Theorem \ref{corhas}]
The arguments in \cite{has-trivial} work with minor changes since we can apply Theorem \ref{thm1.3} instead of \cite[Theorem 1.1]{birkar-flip}. 
\end{proof}

\begin{proof}[Proof of Corollary \ref{cor1.4}]
We have $\kappa_{\iota}(X,K_{X}+\Delta)=\nu(X,K_{X}+\Delta)\geq 0$ by hypothesis. 
Thus there exists $E\geq0$ such that $K_{X}+\Delta\sim_{\mathbb{R}}E$ (cf.~\ref{numeri}). 
Take the Iitaka fibration $X\dashrightarrow V$ of $E$ and let $(Y,\Gamma)$ be a log smooth model of $(X,\Delta)$ such that the induced map $Y\dashrightarrow V$ is a morphism.
By \cite[Theorem 6.1]{leh} (see also \cite{leh2} and \cite{eckl}), we see that ${\rm dim}\,V=\nu(Y,K_{Y}+\Gamma)$ and $\nu(Y/V,K_{Y}+\Gamma)=0$ (see also Step \ref{step1haconxu} in the proof of Theorem \ref{thmhaconxu}). 
Then $(Y,\Gamma)$ has a good minimal model by Lemma \ref{lemfiber} (put $Z={\rm Spec}\,\mathbb{C}$ in Lemma \ref{lemfiber}) and Theorem \ref{corhas}, and therefore $(X,\Delta)$ has a good minimal model. 
So we are done. 
\end{proof}

%%%%%%%%%%%%%%%

\end{document}